\g@addto@macro\normalsize{%
  \setlength\abovedisplayskip{4pt}
  \setlength\belowdisplayskip{4pt}
  \setlength\abovedisplayshortskip{4pt}
  \setlength\belowdisplayshortskip{4pt}
}
\crefname{section}{Section}{Sections}
\crefname{subsection}{Subsection}{Subsections}
\crefname{condition}{Condition}{Conditions}
\crefname{hypothesis}{Hypothesis}{Conditions}
\crefname{assumption}{Assumption}{Assumptions}
\crefname{lemma}{Lemma}{Lemmas}
\crefname{definition}{Definition}{Definitions}
\newtheorem{theorem} {Theorem}[section]
\newtheorem{lemma}[theorem]{Lemma}
\newtheorem{example}[theorem]{Example}
\newtheorem{counter example}[theorem]{Counter Example}
\newtheorem{definition}[theorem] {Definition}
\def\CC{{\rm \kern.24em \vrule width.02em height1.4ex depth-.05ex \kern-.26emC}}
\def\TagOnRight
\def\AA{{it I} \hskip-3pt{\tt A}}
\def\QQ{\rlap {\raise 0.4ex \hbox{$\scriptscriptstyle |$}} {\hskip -0.1em Q}}
\newcommand{\vo}{\vec{o}\@ifnextchar{^}{\,}{}}
\def\YYint#1#2#3{{\setbox0=\hbox{$#1{#2#3}{\iint}$}
    \vcenter{\hbox{$#2#3$}}\kern-.50\wd0}}
\def\XXint#1#2#3{{\setbox0=\hbox{$#1{#2#3}{\int}$}
    \vcenter{\hbox{$#2#3$}}\kern-.50\wd0}}
\def\namedlabel#1#2{\begingroup
   \def\@currentlabel{#2}%
   \label{#1}\endgroup
}
\newcommand{\rmh}[1]{\mathpalette{\raisem@th{#1}}}
\newcommand{\raisem@th}[3]{\hspace*{-1pt}\raisebox{#1}{$#2#3$}}
\newcounter{desccount}
\newcommand{\descitem}[2]{\item[#1]\refstepcounter{desccount}\label{#2}}
\newcommand{\descref}[2]{\hyperref[#1]{\textnormal{\textcolor{black}{}\textcolor{blue}{\bf #2}\textcolor{black}{}}}}
\newcommand{\dref}[2]{\hyperref[#1]{\textcolor{black}{(}\textcolor{blue}{\bf #2}\textcolor{black}{)}}}
\newcommand{\be} {\begin{eqnarray}}
\newcommand{\ee} {\end{eqnarray}}
\newcommand{\Bea} {\begin{eqnarray*}}
\newcommand{\Eea} {\end{eqnarray*}}
\newcommand{\pa} {\partial}
\newcommand{\re}{\mathbb{R}}
 \newcommand{\al} {\alpha}
\newcommand{\rr}{\rightarrow}
\newcommand{\de} {\delta}
\newcommand{\p}  {\prime}
\newcommand{\e}  {\epsilon}
\newcommand{\la} {\lambda}
\newcommand{\si} {\sigma}
\newcommand{\f}{\infty}
\newcommand{\R}{\mathbb{R}}
\newcommand{\lab} {\label}
\newcommand{\Z}{\mathbb{Z}}
\newcommand{\D}{\Delta}
\newcommand{\jph}{j+1/2}
\newcommand{\jmh}{j-1/2}
\newcommand{\BV}{\textrm{BV}}
\DeclareMathOperator{\loc}{loc}
\DeclareMathOperator{\TV}{TV}
\newcommand{\norm}[1]{\left|\hspace{-0.2mm}\left| #1 \right|\hspace{-0.2mm}\right|}
\newcommand{\abs}[1]{\left| #1\right|}
\newcommand{\sumj}{\sum_{j \in \Z}}
\newcommand{\mF}{\mathcal{F}}
\newcommand{\mM}{\mathcal{M}}
\newcounter{whitney}
\newcounter{ineqcounter}
\def\ps@pprintTitle{%
\let\@oddhead\@empty
\let\@evenhead\@empty
\def\@oddfoot{}%
\let\@evenfoot\@oddfoot}
\newcommand{\refcheckize}[1]{%
  \expandafter\let\csname @@\string#1\endcsname#1%
  \expandafter\DeclareRobustCommand\csname relax\string#1\endcsname[1]{%
    \csname @@\string#1\endcsname{##1}\wrtusdrf{##1}}%
  \expandafter\let\expandafter#1\csname relax\string#1\endcsname
}
\newcommand{\mainsectionstyle}{%
	\renewcommand{\@secnumfont}{\bfseries}
	\renewcommand\section{\@startsection{section}{2}%
		\z@{.5\linespacing\@plus.7\linespacing}{-.5em}%
		{\normalfont\bfseries}}%
}
\xpatchcmd{\MaketitleBox}{\hrule}{}{}{}
\xpatchcmd{\MaketitleBox}{\hrule}{}{}{}
\date{}
\begin{document}
\begin{frontmatter}
	
	\title{Convergence of a Godunov scheme to an Audusse-Perthame adapted entropy
	solution for conservation laws with BV spatial flux }
	
	\author[myaddress1]{Shyam Sundar Ghoshal}
	\ead{ghoshal@tifrbng.res.in}

	\author[myaddress1]{Animesh Jana}
	\ead{animesh@tifrbng.res.in }
	
	\address[myaddress1]{Centre for Applicable Mathematics,Tata Institute of Fundamental Research, Post Bag No 6503, Sharadanagar, Bangalore - 560065, India.}

	\author[myaddress2]{John D. Towers}
	\ead{john.towers@cox.net}
	
	\address[myaddress2]
	{MiraCosta College, 3333 Manchester Avenue, Cardiff-by-the-Sea, CA 92007-1516, USA.}
	\begin{abstract}
	In this article we consider the initial value problem for a scalar conservation law in one space dimension with a
	spatially discontinuous flux. There may be infinitely many flux discontinuities, and the set of discontinuities may have
	accumulation points. Thus the existence of traces cannot be assumed. In \cite{AudussePerthame} Audusse and Perthame 	proved a uniqueness result that does not
	require the existence of traces, using adapted entropies.  We generalize the Godunov-type scheme of Adimurthi, Jaffr\'e 	and Gowda \cite{AJG} for this problem with the following assumptions on the flux function, (i) the flux is BV in the spatial 	variable and (ii) the critical point of the flux is BV as a function of the space variable. We prove that the Godunov 	approximations converge to an adapted entropy solution, thus providing an existence result, and extending the convergence 	result of  Adimurthi, Jaffr\'e and Gowda.		
	\end{abstract}
	\begin{keyword}
		conservation law \sep discontinuous flux \sep existence \sep numerical scheme \sep singular mapping 		\sep entropy inequality \sep Godunov scheme. 
    \MSC[2010] 35L65, 35B44, 35A01, 65M06, 65M08
	\end{keyword}
	
\end{frontmatter}
\vspace*{-.5cm}	
\section{Introduction}	 
In this article we prove existence of an adapted entropy solution in the sense of
Audusse and Perthame \cite{AudussePerthame}, via a convergence proof for a Godunov type
finite difference scheme, to the following Cauchy problem:
\begin{eqnarray}
\pa_t u+\pa_x A(u,x)&=&0\hspace*{.55cm}\ \mbox{ for }(x,t)\in \R \times (0,T)=: \Pi_T,\lab{11}\\
u(x,0)&=&u_0\hspace*{.5cm}\mbox{ for }x\in\re.\lab{ini12}
\end{eqnarray}
The partial differential equation appearing above is a scalar one-dimensional conservation law whose flux
$A(u,x)$ has a spatial dependence that may have infinitely many spatial discontinuities. In contrast to all
but a few previous papers on conservation laws with discontinuous flux that address the uniqueness question, 
we make no assumption about the existence
of traces, and so the set of spatial flux discontinuities could have accumulation points.

Scalar conservation laws with discontinuous flux have a number of
applications including vehicle traffic flow with rapid transitions in road conditions \cite{GNPT:2007},
sedimentation in clarifier-thickeners \cite{bkrt2,Diehl:1996B},
oil recovery simulation  \cite{shen_polymer},
two phase flow in porous media \cite{And_Cances},
and granular flow \cite{may_shearer}.

Even in the absence of spatial flux discontinuities, solutions of conservation laws develop
discontinuities (shocks). Thus we seek weak solutions, which are bounded measurable functions
$u$ satisfying \eqref{11} in the sense of distributions. Closely related to the presence of shocks is the problem of nonuniqueness.
Weak solutions are not generally unique without an additional condition or conditions, so-called
entropy conditions. For the classical case of a conservation law with a spatially independent flux
\begin{equation}\label{cl_no_jumps}
u_t + f(u)_x = 0,
\end{equation} 
one requires that the
Kru\v{z}kov entropy inequalities hold in the sense of distributions:
\begin{equation}\label{kruzkov_classical}
\partial_t \abs{u-k} + \partial_x \{sgn(u-k)(f(u)-f(k)) \} \le 0, \quad \forall k \in \R,
\end{equation}
and then uniqueness follows from \eqref{kruzkov_classical}.

There are two main difficulties that arise which are not present
in the classical case \eqref{cl_no_jumps}.  The first problem is existence, the new difficulty being 
that getting a $TV$ bound for the solution with $BV$ initial data may not be possible in general due to the counter-examples given in \cite{ADGG,SSG1}. More interestingly, a $TV$ bound for the solution is possible near the interface for non-uniformly convex fluxes (see Reference \cite{SSG2}).
Several methods have been used to deal with the lack of a spatial variation bound, the main ones
being the so-called singular mapping, compensated compactness, and a local variation bound.
In this paper we employ the singular mapping approach, applied to approximations generated
by a Godunov type difference scheme.
The singular mapping technique is used to get a $TV$ bound of a transformed (via the singular mapping) quantity.  Once the $TV$ bound of the transformed quantity is achieved we can pass to the limit and get a solution satisfying the adapted entropy inequality. Showing that the limit of the numerical approximations satisfies the adapted entropy inequalities is not straightforward due
to the presence of infinitely many flux discontinuities. 

The second problem is uniqueness. The usual Kru\v{z}kov entropy inequalities
do not apply to the discontinuous case. Also, it turns out that there are many reasonable notions
of entropy solution \cite{AMG}, \cite{AKR}. One must consider the application in order to decide on which definition of
entropy solution to use. 

There have been many papers on the subject of scalar conservation laws with
spatially discontinuous flux over
the past several decades.
Most papers on this subject that have addressed the uniqueness question have assumed a 
finite number of flux discontinuities. Often the case of a single flux discontinuity is 
addressed, with the understanding that the results are readily extended to the case of
any finite number of flux discontinuities. The admissibility condition has usually boiled down
to a so-called interface condition (in addition to the Rankine-Hugoniot condition) that involves
the traces of the solution across the spatial flux discontinuity. Often the interface condition
consists of one or more inequalities, and is often derived from some modified version of the classical
Kru\v{z}kov entropy inequality.

When there are only finitely many flux discontinuities, existence of the required traces is
guaranteed, assuming that $u \mapsto A(u,x)$ is genuinely nonlinear \cite{Panov:2006uq}, \cite{Vasseur}.  
However if there are 
infinitely many flux discontinuities, and the subset of $\R$ where they 
occur has one or more accumulation points, these existence results for traces do not apply. Thus a definition of
entropy solution which does not refer to traces is of great interest. 

A method has been developed first in \cite{BaitiJenssen}, and then extended in \cite{AudussePerthame},
using so-called adapted entropy inequalities, that provides a notion of entropy solution and does
not require the existence of traces. For the conservation law $u_t + A(u,x)_x = 0$ with $x \mapsto A(u,x)$ smooth,
the classical Kru\v{z}kov inequality \eqref{kruzkov_classical} becomes
\begin{equation}\label{kruzkov_smooth}
\partial_t \abs{u-k} + \partial_x \{sgn(u-k)(A(u,x)-A(k,x)) \} + sgn(u-k)\partial_x A(k,x)
\le 0, \quad \forall k \in \R.
\end{equation}
Due to the term $sgn(u-k)\partial_x A(k,x)$, this definition does not make 
sense without modification when one tries to
extend it to the case of the discontinuous flux $A(u,x)$ considered here.

The adapted entropy approach consists of replacing the constants $k \in \R$ by
functions $k_\alpha$ defined by the equations
\begin{equation*}
A(k_\alpha(x),x) = \alpha, \quad x\in \R.
\end{equation*}
With this approach the troublesome term $sgn(u-k)\partial_x A(k,x)$ is not present, and
the definition of adapted entropy solution is
\begin{equation}\label{kruzkov_adapted}
\partial_t \abs{u-k_{\alpha}} + 
\partial_x \{sgn(u-k_{\alpha})(A(u,x)-\alpha) \} \le 0.
\end{equation}
Reference \cite{BaitiJenssen} used this approach for the closely related problem where
$u\mapsto A(u,x)$ is strictly increasing. They proved both existence and uniqueness,
with the additional assumption that the flux has the form $A(u,x) = \tilde{A}(u,v(x))$.
Reference~\cite{AudussePerthame} proved uniqueness for both the unimodal case considered
in this paper, along with the case where $u\mapsto A(u,x)$ is strictly increasing.
The existence question was left open.

Recently there has been renewed interest in the existence question for problems where
the Audusse-Perthame uniqueness theory applies. Reference \cite{piccoli_tournus}
proved existence for the problem where
$u\mapsto A(u,x)$ is strictly increasing, and without assuming the
special functional form $A(u,x) = \tilde{A}(u,v(x))$. This was accomplished under the
simplifying assumption that $u \mapsto A(u,x)$ is concave. Reference \cite{Towers_bv}
extended the result of \cite{piccoli_tournus} to the case where 
$u \mapsto A(u,x)$ is not required to be concave. Panov \cite{Panov2009a} proved existence of an adapted entropy solution, 
under assumptions that include our setup, by a measure-valued solution approach. The approach of
\cite{Panov2009a} is quite general but more abstract than ours, and is not associated
with a numerical algorithm.  


 The Godunov type scheme of this paper is a generalization of the scheme developed in \cite{AJG} for the case where
 the flux has the form
 \begin{equation}\label{two_flux}
 A(u,x) = g(u) (1-H(x)) + f(u) H(x),
 \end{equation}
 where each of $g, f$ is unimodal and $H(\cdot)$ denotes the Heaviside function. This is a so-called two-flux problem, where
 there is a single spatial flux discontinuity. The authors of \cite{AJG} proposed a very simple interface flux that extends the classical
 Godunov flux so that it properly handles a single flux discontinuity. The singular mapping technique is used to prove that the Godunov approximations
 converge to a weak solution of the conservation law. With an additional regularity assumption about the limit solution (the solution is assumed   to be continuous except for finitely many Lipschitz curves in $\R \times \R_+$), they also
 prove uniqueness. 
 
 The scheme and results of
 the present paper improve and extend those of \cite{AJG}. By adopting the Audusse-Perthame definition of entropy solution \cite{AudussePerthame}, and then invoking
 the uniqueness result of \cite{AudussePerthame}, we are able to remove the regularity assumption employed in  \cite{AJG}, and also the restriction to finitely many flux discontinuities. Moreover, the scheme of \cite{AJG} is defined on a nonstandard spatial grid that is specific to the case of a single flux discontinuity, and would be inconvenient from a programming viewpoint for the case of multiple flux discontinuities. Our scheme uses a
 standard spatial grid, and in fact our algorithm does not require that flux discontinuities be specifically located, identified, or processed in any special way. Our approach is based on the observation that it is possible to simply apply the Godunov interface flux at every grid cell boundary. At cell boundaries where there is no flux discontinuity, the interface flux automatically reverts to the classical Godunov flux, as desired. This not only makes it possible to use a standard spatial grid, but also simplifies the analysis of the scheme.
 
 The remainder of the paper is organized as follows.
 In Section~\ref{def} we specify the assumptions on the data of the problem, give the definition of adapted entropy solution, 
 and state our main theorem, Theorem~\ref{theorem1}.
 In Section~\ref{section_compactness_direct} we give the details of 
the Godunov numerical scheme, and prove convergence (along a subsequence) of the resulting approximations. In Section~\ref{entropy_section_A} we show that a (subsequential) limit solution guaranteed by our convergence theorem is an adapted entropy solution in the sense of Definition~\ref{entropy_solution},
completing the proof of the main theorem.

\section{Main theorem}\label{def}
We assume that the flux function $A:\re\times\re\rr\re_+$ satisfies the following conditions:
\begin{description}
	\descitem{H-1}{H1} For some $r>0$
	\begin{equation*}
	|A(u_1,x)-A(u_2,x)|\leq C|u_1-u_2|\mbox{ for }u_1,u_2\in[-r,r]
	\end{equation*}
	where the constant $C=C(r)$ is independent of  $x$.
	\descitem{H-2}{H2} There is a $BV$ function $a:\re\rr\re$ and a continuous
	function $R:\R \rightarrow \R^+$ such that
	\begin{equation*}
	|A(u,x)-A(u,y)|\leq R(u)|a(x)-a(y)|.
	\end{equation*}
	\descitem{H-3}{H3} For each $x\in\re$ the function
	$u \mapsto A(u,x)$ is unimodal, meaning that there is $u_{M}(x)\in \re$ such that
	$A(u_M(x),x)=0$ and $A(\cdot,x)$ is decreasing on $(-\f,u_M(x)]$ and increasing on $[u_M(x),\f)$.
	We further assume that there is a continuous function $\gamma: [0,\infty) \rightarrow [0,\infty)$,
	which is strictly increasing with $\gamma(0)=0$, $\gamma(+\infty) = +\infty$, and
	such that
	\begin{equation}\label{uniform_unimodal}
	\begin{split}
	&\textrm{$A(u,x) \ge \gamma(u-u_M(x))$ for all $x\in \R$ and $u \in [u_M(x),\infty]$},\\
	&\textrm{$A(u,x) \ge \gamma(-(u-u_M(x)))$ for all $x\in \R$ and $u \in (-\infty,u_M(x)]$}.
	\end{split}
	\end{equation}
	\descitem{H-4}{H4} $u_M \in BV(\R)$.	
\end{description}	
Above we have used the notation $\BV(\R)$ to denote the set of functions of bounded variation on $\R$, i.e., those functions
$\rho:\R \mapsto \R$ for which 
\begin{equation*}
\TV(\rho) := \sup \left\{\sum_{k=1}^K \abs{\rho(\xi_k) - \rho(\xi_{k-1})} \right\} < \infty,
\end{equation*}
where the $\sup$ extends over all $K\ge 1$ and all partitions $\{\xi_0< \xi_1< \ldots < \xi_K \}$ of $\R$.

By Assumption \descref{H3}{H-3}, for each $\alpha \ge 0$ there exist two functions $k_\al^+(x)\in[u_M(x),\f)$ and $k_\al^-(x)\in(-\f,u_M(x)]$ uniquely determined from the following equations:
\begin{equation}
A(k_\al^+(x),x)= A(k_\al^-(x),x)=\al.
\end{equation}

Related to the flux $A(\cdot,\cdot)$ is 
the so-called singular mapping:
\begin{equation}\label{Psi}
\Psi(u,x):=\int\limits_{u_{M}(x)}^{u}\left|\frac{\pa}{\pa u}A(\theta,x)\right|d\theta.
\end{equation}
It is clear that for each $x\in \R$ the mapping $u \mapsto \Psi(u,x)$ is strictly increasing. Therefore for each $x\in\re$ the map $u\mapsto\Psi(u,x)$ is invertible and we denote the inverse map by $\al(u,x)$. Notice that $\al(\cdot,\cdot)$ and $\Psi(\cdot,\cdot)$ satisfy the following relation
\begin{equation}\label{map_alpha}
\Psi(\al(u,x),x)=u=\al(\Psi(u,x),x)\mbox{ for all }x\in\re.
\end{equation}
Also, due to Assumption \descref{H3}{H-3},  \eqref{Psi} is equivalent to $\Psi(u,x)=sgn(u-u_M(x))A(u,x)$. 
\begin{definition}\lab{entropy_solution}
	A function $u \in L^{\infty}(\Pi_T) \cap C([0,T]:L^1_{\loc}(\R))$ is an adapted entropy solution of  
	the Cauchy problem (\ref{11})--(\ref{ini12})
	if it satisfies the following adapted entropy inequality in the sense of distributions:
	\begin{equation}\label{AEC}
	\pa_t|u-k_\al^\pm(x)|+\pa_x \left[sgn(u-k_\al^\pm(x))(A(u,x)-\al)\right]\leq 0
	\end{equation}  
	for all $\al\geq 0$ or equivalently,

\begin{equation}\label{kruzkov_type}
\begin{split}
\int\limits_{\re_+}\int\limits_{\re}\frac{\pa\phi}{\pa t}|u(x,t)-k^{\pm}_{\al}(x)|\,dxdt
&+\int\limits_{\re_+}\int\limits_{\re}\frac{\pa \phi}{\pa x}sgn(u(x,t)-k^{\pm}_{\al}(x))(A(u(x,t),x)-\al)\,dxdt \\
&+ \int\limits_{\re} \abs{u_0(x) -k^{\pm}_{\al}(x)}\phi(x,0) \,dx 
\geq 0 
\end{split}
\end{equation}
for any $0\leq\phi\in C_c^{\f}(\re\times [0,\infty))$.
\end{definition}
For uniqueness and stability we will rely on the following result by Panov.
\begin{theorem}[Uniqueness Theorem \cite{Panov2009a}]\label{thm:uniqueness}
	Let $u, v$ be adapted entropy solutions in the sense of Definition \ref{entropy_solution}, 
	with corresponding initial data $u_0$, $v_0$,
	and assume that Assumptions (\descref{H1}{H-1})--(\descref{H4}{H-4}) hold. 
	 Then for a.e. $t\in[0,T]$ and any $r>0$ we have
	\begin{equation}\label{contraction}
	\int\limits_{\abs{x}\leq r}\abs{\al(u(x,t),x)-\al(v(x,t),x)}\,dx\leq \int\limits_{\abs{x}\leq r+L_1t}\abs{\al(u_0(x),x)-\al(v_0(x),x)}\,dx
	\end{equation}
	where $L_1:=\sup\{\pa_u A(u,x);\,x\in\re,\abs{u}\leq\max(\|u_0\|_{L^{\f}},\|v_0\|_{L^{\f}})\}$ and $\al$ is as in \eqref{map_alpha}.
\end{theorem}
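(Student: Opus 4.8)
The plan is to obtain \eqref{contraction} by a Kru\v{z}kov doubling-of-variables argument adapted to the entropy family \eqref{AEC}, in which the functions $k_\al^\pm(x)$ take over the role played by the Kru\v{z}kov constants in the homogeneous theory. Since the statement is attributed to Panov \cite{Panov2009a}, the shortest route is to verify that Assumptions (\descref{H1}{H-1})--(\descref{H4}{H-4}) supply exactly the hypotheses of the cited theorem; below I instead outline the self-contained argument, which also explains why these four assumptions are the natural ones. Throughout, the bound \descref{H1}{H-1} furnishes the finite Lipschitz constant $L_1=\sup\abs{\pa_u A}$ that will appear as the propagation speed, and \eqref{uniform_unimodal} guarantees the uniform estimate $\abs{k_\al^\pm(x)-u_M(x)}\le \g^{-1}(\al)$, so that the comparison functions stay bounded uniformly in $x$.

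First I would set up the doubling. For a.e. $(y,s)$ at which $v$ is approximately continuous, put $\al:=A(v(y,s),y)$ and select the branch $\pm$ according to $sgn(v(y,s)-u_M(y))$, so that $k_\al^\pm(y)=v(y,s)$. I would insert the comparison function $x\mapsto k_\al^\pm(x)$ into the entropy inequality \eqref{kruzkov_type} for $u$, and symmetrically insert $y\mapsto k_\beta^\pm(y)$ with $\beta:=A(u(x,t),x)$ into the inequality for $v$, testing both against a single nonnegative $\phi(x,t,y,s)$ concentrated near the diagonal $x=y$, $t=s$ via a product of mollifiers $\rho_\eps$. Adding the two inequalities and integrating, the time terms combine into contributions of $\pa_t$-type and the spatial terms combine through the flux differences $A(u,x)-\al$ and $A(v,y)-\beta$.

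The decisive step is the passage to the diagonal. Because $\al=A(v(y,s),y)$, on the diagonal $y=x$, $s=t$ the identity $A(k_\al^\pm(x),x)=\al=A(v(x,t),x)$ forces $k_\al^\pm(x)=v(x,t)$ on the matching branch, by unimodality \descref{H3}{H-3}; combined with the invertibility of $u\mapsto\Psi(u,x)$ recorded in \eqref{Psi}--\eqref{map_alpha}, this is what converts the limiting Kato-type inequality into a statement about $\al(u,x)-\al(v,x)$ rather than $u-v$. In other words, the singular mapping linearises the entropy structure: it is the transformed quantity $\al(\cdot,\cdot)$, and not the solution itself, for which a clean $L^1$ contraction is available, which is essential here because a $TV$ bound on $u$ need not exist.

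The main obstacle is controlling the error generated by the spatial flux discontinuities, i.e. by the fact that $x\mapsto k_\al^\pm(x)$ is itself discontinuous wherever $a$ or $u_M$ jumps, with possibly infinitely many such points having accumulation. Since no traces are available, these terms cannot be treated by a pointwise interface analysis; instead I would estimate them using Assumptions \descref{H2}{H-2} and \descref{H4}{H-4}. The modulus $\abs{A(u,x)-A(u,y)}\le R(u)\abs{a(x)-a(y)}$ together with $u_M\in BV(\R)$ bounds the total contribution of the jumps by the finite total variations of $a$ and $u_M$, while the strict monotonicity and coercivity of $\g$ in \eqref{uniform_unimodal} keep $\al(\cdot,x)$ equicontinuous. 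As $\eps\to 0$ these error terms vanish because $a$ and $u_M$, being $BV$, are continuous off a countable set of measure zero, so the diagonal is reached through points of continuity. Finally, choosing $\phi$ to be the standard truncated-cone test function with slope $L_1$ yields the domain of dependence $\{\abs{x}\le r+L_1 t\}$, giving \eqref{contraction}; uniqueness and stability follow at once since $u_0=v_0$ forces $\al(u,\cdot)=\al(v,\cdot)$ and hence $u=v$.
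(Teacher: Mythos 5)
Your route is the Audusse--Perthame one: you double variables directly in the original unknowns, inserting the adapted entropies $k_\al^\pm(x)$ with the level $\al$ and the branch dictated by the value of the other solution. The paper proves the theorem differently, following Panov \cite{Panov2009a}: it first applies the singular mapping, setting $v=\Psi(u(x,t),x)$, after which the adapted inequality \eqref{AEC} becomes \eqref{inequality:modulated} --- a Kru\v{z}kov-type inequality with genuine real constants $k$ and the spatially \emph{homogeneous} flux $g(v)=\abs{v}$, all $x$-dependence being confined to the time term $\abs{\al(v,x)-\al(k,x)}$. In the subsequent doubling (Lemma \ref{lemma:v_1v_2}) the flux therefore produces no error terms at all, and the residual $x$-dependence is handled by pure measure theory: $x\mapsto\al(\cdot,x)$ is a bounded measurable map into $C([-r,r])$ (Pettis theorem), so almost every $x$ is a Lebesgue point of it (the set $E_2$). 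This is exactly why the paper can dispense with the hypothesis of \cite{AudussePerthame} that the flux discontinuities form a closed null set --- a hypothesis that (\descref{H1}{H-1})--(\descref{H4}{H-4}) do not imply, since a $BV$ coefficient may jump on a dense countable set; your proposal, by contrast, walks back into the Audusse--Perthame framework that the paper deliberately avoids.

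Doing so creates a genuine gap at what you call the ``main obstacle'' step. The error terms in your doubling are of the form $\abs{k_\al^\pm(x)-k_\al^\pm(y)}$ for $x$ near $y$, with $\al=A(v(y,s),y)$, and these are not controlled by the estimates you invoke. Assumption \descref{H2}{H-2} only gives closeness of \emph{flux values}: $\abs{A(k_\al^\pm(y),x)-\al}\le R(k_\al^\pm(y))\abs{a(x)-a(y)}$. To convert this into closeness of the \emph{points} $k_\al^\pm(x)$ and $k_\al^\pm(y)$ you must invert $A(\cdot,x)$ with a modulus uniform in $x$, and (\descref{H1}{H-1})--(\descref{H4}{H-4}) supply none: the coercivity bound \eqref{uniform_unimodal} merely forces $A(\cdot,x)$ to stay above the graph of $\gamma$, while $A(\cdot,x)$ may be arbitrarily close to flat there, so the inverse maps $\al(\cdot,x)=\Psi^{-1}(\cdot,x)$ are in general \emph{not} equicontinuous --- your appeal to the coercivity of $\gamma$ for this is incorrect. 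Consequently the claim that the errors vanish ``because the diagonal is reached through points of continuity of $a$ and $u_M$'' does not follow from what you state. The step can be repaired in either of two ways: at every common continuity point $x_0$ of $a$ and $u_M$ (all but countably many points), boundedness of $\{k_\al^\pm(x)\}$ together with continuity and strict monotonicity of $A(\cdot,x_0)$ gives $k_\al^\pm(x)\to k_\al^\pm(x_0)$ by a compactness argument carrying no rate, and monotonicity in $\al$ upgrades this to uniform convergence for $\al$ in compact sets, which suffices by dominated convergence; alternatively, one uses, as the paper does, the Lebesgue-point property of $x\mapsto\al(\cdot,x)$, which requires only measurability and boundedness. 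Without one of these ingredients the passage to the diagonal --- the heart of the proof --- is unjustified as written.
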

Though Theorem \ref{thm:uniqueness} is not stated in \cite{Panov2009a} but it essentially follows from the techniques used in \cite[Theorem 2]{Panov2009a} and Kru\v{z}kov's uniqueness proof \cite{Kruzkov} for scalar conservation laws. For sake of completeness we give a sketch of the proof for Theorem \ref{thm:uniqueness} in Appendix. The main reason to rely on Theorem \ref{thm:uniqueness} instead of the uniqueness result in \cite{AudussePerthame} is to exclude the following assumption \cite[Hypothesis (H1); page 5]{AudussePerthame} on flux:
\begin{itemize}
	\item A(u,x) is continuous at all points of $\R \times \R \setminus \mathcal{N}$ where $\mathcal{N}$ is a closed set of
	zero measure.
\end{itemize}

Reference \cite{AudussePerthame} presents the following two examples to which their uniqueness theorem applies.
\begin{example}
	\begin{equation*}
	A(u,x) = S(x)u^2, \quad S(x) >0.
	\end{equation*}
	In this example $u_M(x) = 0$ for all $x \in \R$. Assumptions (\descref{H1}{H-1})--(\descref{H4}{H-4}) 
	are satisfied if $S \in BV(\R)$, and $S(x) \ge \epsilon$ for some $\epsilon>0$.
\end{example}

\begin{example}
	\begin{equation*}
	A(u,x) = (u-u_M(x))^2.
	\end{equation*}
	Assumptions (\descref{H1}{H-1})--(\descref{H4}{H-4}) 
	are satisfied for this example also if we assume that 
	$u_M \in BV(\R).$
\end{example}
Our main theorem is
\begin{theorem}\label{theorem1}
	Assume that the flux function $A$ satisfies (\descref{H1}{H-1})--(\descref{H4}{H-4}), and that $u_0\in L^{\infty}(\R)$. 
	Then as the mesh size $\D \rightarrow 0$, the approximations $u^{\D}$ generated by the Godunov scheme described in 	Section~\ref{section_compactness_direct} converge in $L^1_{\loc}(\Pi_T)$ and pointwise a.e in $\Pi_T$ to the unique 	adapted entropy solution $u \in L^{\infty}(\Pi_T) \cap C([0,T]:L^1_{\loc}(\R))$ corresponding to the Cauchy problem 	(\ref{11})--(\ref{ini12}) with initial data $u_0$.
\end{theorem}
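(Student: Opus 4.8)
The plan is to follow the two-stage architecture announced in the introduction. First I would extract a limit of the Godunov approximations along a subsequence by a compactness argument built on the singular mapping $\Psi$; then I would verify that this limit is an adapted entropy solution and invoke the uniqueness result, Theorem~\ref{thm:uniqueness}, to promote subsequential convergence to convergence of the full family $u^\Delta$.

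To set up, I would apply the Godunov interface flux at every cell interface $x_{i+1/2}$, with the flux frozen cell-wise as $A(\cdot,x_i)$, and record the consistency and (two-point) monotonicity of the Godunov numerical flux built from the unimodal map $u \mapsto A(u,x_i)$; at interfaces carrying no flux discontinuity this automatically reduces to the classical Godunov flux. Using the unimodal structure (\descref{H3}{H-3}) together with a CFL restriction, the scheme is monotone, and a discrete invariant-region argument then gives a uniform bound $\norm{u^\Delta}_{L^\infty(\Pi_T)} \le M$ independent of $\Delta$.

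The technical core is the spatial variation bound obtained through the singular mapping. Rather than bounding $\TV(u^\Delta)$ directly, which is impossible in general because of the counterexamples cited in the introduction, I would introduce the transformed grid function $z_i^n := \Psi(u_i^n,x_i)$ and prove an estimate of the form $\sum_i \abs{z_{i+1}^n - z_i^n} \le C$, uniform in $n$ and $\Delta$. Using the identity $\Psi(u,x) = sgn(u-u_M(x))A(u,x)$, the Godunov update rewritten in the $z$-variable becomes monotone up to source terms governed by the spatial variation of $a(\cdot)$ and of $u_M(\cdot)$; here Assumptions (\descref{H2}{H-2}) and (\descref{H4}{H-4}) enter decisively, the $BV$ regularity of $a$ and $u_M$ supplying a summable defect across flux discontinuities. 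Together with a standard $L^1$-in-time continuity estimate, Helly's theorem then yields a subsequence of $u^\Delta$ converging in $L^1_{\loc}(\Pi_T)$ and pointwise a.e.\ to some $u \in L^\infty(\Pi_T)$, the convergence of $z^\Delta$ transferring back to that of $u^\Delta$ through the continuous inverse $\al(\cdot,x)$ of $\Psi(\cdot,x)$ from \eqref{map_alpha}. I expect this variation estimate to be the main obstacle: one must verify that the $z$-variable scheme is monotone up to controllable source terms and that all defects generated at the possibly infinitely many, accumulating flux discontinuities remain summable uniformly in $\Delta$ against the $BV$ norms of $a$ and $u_M$.

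With a limit in hand, a Lax--Wendroff type passage identifies $u$ as a weak solution. To obtain the adapted entropy inequality \eqref{AEC} I would establish a discrete analogue: for each $\al \ge 0$, the monotonicity of the Godunov flux together with the cell-wise steady states $k_\al^\pm(x_i)$ yields a discrete inequality for $\abs{u_i^n - k_\al^\pm(x_i)}$ whose error terms are again controlled by the $BV$ data through (\descref{H2}{H-2}) and (\descref{H4}{H-4}). Passing $\Delta \to 0$ and using the a.e.\ convergence with dominated convergence produces \eqref{kruzkov_type}, so that $u$ is an adapted entropy solution in the sense of Definition~\ref{entropy_solution}. A secondary difficulty here, stemming from the absence of traces and the accumulation of discontinuities, is that this discrete inequality must be summed globally and its error estimated by the total variation of the data rather than cell-by-cell near isolated interfaces. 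Finally, Theorem~\ref{thm:uniqueness} shows the adapted entropy solution with data $u_0$ is unique; since the subsequential limit is thus independent of the chosen subsequence, the standard argument that every subsequence admits a further subsequence converging to the same $u$ forces the entire family $u^\Delta$ to converge, completing the proof.
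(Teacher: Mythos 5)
Your architecture is the paper's own: the interface Godunov flux applied at every cell boundary, monotonicity under a CFL condition, an invariant region $k^-_{\bar{\alpha}}(x_j)\le u^n_j\le k^+_{\bar{\alpha}}(x_j)$ built from stationary discrete solutions, a spatial variation bound for $z^{\D}=\Psi(u^{\D},\cdot)$ with defects controlled by $\TV(a)+\TV(u_M)$, a discrete adapted entropy inequality with the states $k^{\pm}_{\alpha}$, a Lax--Wendroff passage to \eqref{kruzkov_type}, and Theorem~\ref{thm:uniqueness} to upgrade subsequential to full convergence. However, there is a genuine gap: you run this compactness machinery directly on $u_0\in L^{\infty}(\R)$, and there its core estimates are unavailable. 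The time-continuity estimate of Lemma~\ref{lemma_time_continuity} (Crandall--Tartar) is bounded by $\TV(u_0)$ and needs compactly supported data for the sums $\sum_j\abs{u^{n+1}_j-u^n_j}$ even to be finite; the spatial bound of Lemma~\ref{lemma_tv_bound_z} controls only the positive part $\sum_j(z^n_{j+1}-z^n_j)_+$ and recovers the full variation from the telescoping identity $\sum_j(z^n_{j+1}-z^n_j)=P^+-P^-$, which again requires that $u^n_j$ vanish for large $\abs{j}$. Moreover your claimed bound $\sum_j\abs{z^n_{j+1}-z^n_j}\le C$ uniformly in $\D$ is simply false for general bounded data: $z^{\D}(\cdot,0)\rightarrow\Psi(u_0(\cdot),\cdot)$ in $L^1_{\loc}(\R)$, so lower semicontinuity of the total variation forces $\TV(z^{\D}(\cdot,0))\rightarrow\infty$ whenever $\Psi(u_0(\cdot),\cdot)\notin\BV(\R)$ (take $A(u,x)=u^2$ and $u_0$ bounded, non-$\BV$, with values in $\{0,\pm1\}$, so that $\Psi(u_0)=u_0$). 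The paper avoids this by proving the theorem first for $u_0\in\BV(\R)$ with compact support and only then extending to $u_0\in L^{\infty}(\R)$ by approximation, using the $L^1$-stability \eqref{contraction} supplied by Theorem~\ref{thm:uniqueness} together with the $L^1$-contractivity of the monotone scheme. Your proposal contains neither the reduction nor the extension, and the closing subsequence-plus-uniqueness argument cannot repair it, because for non-$\BV$ data no convergent subsequence is produced in the first place.

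Two smaller misstatements, both repairable. First, the mechanism of the variation bound is not that ``the scheme rewritten in the $z$-variable is monotone up to source terms'': the paper never writes an evolution for $z$; it bounds $(z^n_{j+1}-z^n_j)_+$ by the singular-mapping inequality of Lemma~\ref{lemma_integral_ineq_1} (from Adimurthi--Jaffr\'e--Gowda) in terms of flux differences $\abs{\bar{A}^n_{j+1/2}-\bar{A}^n_{j-1/2}}$, which the scheme converts into the time increments of $u$, plus $\BV$ defects (Lemma~\ref{lemma_integral_ineq_2}), and then handles the negative part by the telescoping identity above. Second, the discrete adapted entropy inequality carries no error terms at all: since $\{k^{\pm}_{\alpha}(x_j)\}_{j\in\Z}$ are exact stationary solutions of the scheme (the proof of Lemma~\ref{lemma:k_stationary_A} works verbatim for every $\alpha\ge0$), the Crandall--Majda argument yields \eqref{ent_discrete_A} exactly; the $\BV$-controlled errors you attribute to the discrete inequality in fact appear only in the Lax--Wendroff passage, where $\bar{A}(v^n_j,v^n_{j+1},x_j,x_{j+1})$ is replaced by $A(v^n_j,x_j)$.
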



\section{Godunov scheme and compactness}\label{section_compactness_direct}
For $\D x>0$ and $\D t>0$ consider equidistant spatial grid points $x_j:=j\D x$ for $j\in\Z$ and temporal grid points $t^n:=n\D t$ 
for integers $0 \le n\le N$. Here $N$ is the integer such that $T \in [t^N,t^{N+1})$. Let $\la:=\D t/\D x$. We fix the notation $\chi_j(x)$ for the indicator function of $I_j:=[x_j - \D x /2, x_j + \D x /2)$, and
$\chi^n(t)$ for the indicator function of $I^n:=[t^n,t^{n+1})$. Next we approximate initial data $u_0\in L^{\f}$ by a piecewise constant function $u^{\D}_0$ defined as follows:
\begin{equation}
u^{\D}_0:=\sumj\chi_j(x)u^0_j\quad \mbox{where }u^0_j=\frac{1}{\D x}\int_{I_j}u_0(x)\,dx\mbox{ for }j\in\Z.
\end{equation}
The  approximations generated by the scheme are denoted by $u_j^n$, where $u_j^n \approx u(x_j,t^n)$.
The grid function $\{u_j^n\}$ is extended to a function defined on $\Pi_T$ via
\begin{equation}\label{def_u_De}
u^{\D}(x,t) =\sum_{n=0}^N \sumj \chi_j(x) \chi^n(t) u_j^n.
\end{equation}
The Godunov type scheme that we employ is then:
\begin{equation}\label{scheme_A}
u_j^{n+1} = u_j^n - \lambda \D_- \bar{A}(u^n_j,u^n_{j+1},x_j,x_{j+1}), \quad j \in \Z, \quad n=0,1,\ldots,N,
\end{equation}
where the numerical flux $\bar{A}$ is
\begin{equation}\label{def_bar_A_direct}
\bar{A}(u,v,x_j,x_{j+1}) :=
\max \left\{A(\max(u,u_M(x_j)),x_j) , A(\min(v,u_M(x_{j+1})),x_{j+1})  \right\}.
\end{equation}
When $A(\cdot,x_j)=A(\cdot,x_{j+1})$, the flux $\bar{A}$ reduces to the classical Godunov flux that is used for conservation laws
where the flux does not have a spatial dependence. Otherwise $\bar{A}$ is a generalization of the Godunov flux
proposed in \cite{AJG} for the two-flux problem where the flux is  given by  \eqref{two_flux}. It is readily verified that
$\bar{A}(u,u,x_j,x_j) = A(u,x_j)$ and that $\bar{A}(u,v,x_j,x_{j+1})$ is nondecreasing (nonincreasing) as a function of $u$ ($v$).
%

Consider $\Psi(\cdot,\cdot)$ as in \eqref{Psi}. Let 
\begin{equation}\label{define_zjn}
z_j^n = \Psi(u_j^n,x_j), \quad z^{\D}(x,t) = \sum_{n=0}^N \sumj \chi_j(x) \chi^n(t) z_j^n.
\end{equation}
We obtain compactness for $\{u^{\D}\}$ via the singular mapping technique, which
consists of first proving compactness for the sequence $\{z^{\D}\}$, and then
observing that convergence of the original sequence $\{u^{\D} \}$ follows from the fact that
$u \mapsto \Psi(u,x)$ has a continuous inverse.

For our analysis we will assume that $u_0$ has compact support and $u_0 \in \BV(\R)$.
Thus all of the sums in what follows are finite.
We will show in section \ref{entropy_section_A} that the solution we obtain as a limit of numerical approximations satisfies the adapted entropy inequality \eqref{AEC}. Using \eqref{contraction}, the resulting existence theorem is then extended to
the case of $u_0 \in  L^{\infty}(\R)$ via approximations to $u_0$ that are in $\BV$ and have compact support.

Let
\begin{equation}\label{A_2}
\bar{\alpha} = \sup_{x \in \R} A(u_0(x),x).
\end{equation}
By Assumption \descref{H1}{H-1}, and since $\norm{u_0}_{\infty}<\infty$ (which follows from $u_0 \in \BV(\R)$), $\bar{\alpha} < \infty$.
Define $k_{\bar{\alpha}}^{\pm}(x)$ via the equations
\begin{equation}\label{define_k}
\begin{split}
&A(k_{\bar{\alpha}}^{-}(x),x) = \bar{\alpha}, \quad k_{\bar{\alpha}}^{-}(x) \le u_M(x),\\
&A(k_{\bar{\alpha}}^{+}(x),x) = \bar{\alpha}, \quad k_{\bar{\alpha}}^{+}(x) \ge u_M(x).
\end{split}
\end{equation}
\begin{lemma}\label{k_bounded}
The following bounds are satisfied: 
\begin{equation}
\sup_{x \in \R}k_{\bar{\alpha}}^{\pm}(x) < \infty.
\end{equation}
\end{lemma}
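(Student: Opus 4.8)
The plan is to obtain the bound directly from the uniform unimodality built into Assumption \descref{H3}{H-3}, namely the two inequalities \eqref{uniform_unimodal}, together with the boundedness of $u_M$ that follows from Assumption \descref{H4}{H-4}. First I would record that $u_M \in \BV(\R)$ forces $u_M$ to be bounded: for any two points the values of $u_M$ differ by at most $\TV(u_M) < \infty$, so $M := \sup_{x \in \R}|u_M(x)| < \infty$.

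Next, fix $x \in \R$. Since $k_{\bar{\alpha}}^{+}(x) \ge u_M(x)$ by \eqref{define_k}, the first inequality in \eqref{uniform_unimodal} gives
\begin{equation*}
\gamma\big(k_{\bar{\alpha}}^{+}(x) - u_M(x)\big) \le A\big(k_{\bar{\alpha}}^{+}(x),x\big) = \bar{\alpha}.
\end{equation*}
Because $\gamma$ is continuous, strictly increasing, and satisfies $\gamma(0)=0$ and $\gamma(+\infty)=+\infty$, it admits a continuous strictly increasing inverse $\gamma^{-1}:[0,\infty)\to[0,\infty)$; applying it (and using $\bar{\alpha}<\infty$) yields $k_{\bar{\alpha}}^{+}(x) \le u_M(x) + \gamma^{-1}(\bar{\alpha}) \le M + \gamma^{-1}(\bar{\alpha})$. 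The right-hand side is independent of $x$, so $\sup_x k_{\bar{\alpha}}^{+}(x) < \infty$. The bound for $k_{\bar{\alpha}}^{-}$ is then immediate, since $k_{\bar{\alpha}}^{-}(x) \le u_M(x) \le M$; and running the same argument on the second inequality of \eqref{uniform_unimodal} produces the symmetric lower bound $k_{\bar{\alpha}}^{-}(x) \ge u_M(x) - \gamma^{-1}(\bar{\alpha})$, giving the full two-sided estimate $|k_{\bar{\alpha}}^{\pm}(x)| \le M + \gamma^{-1}(\bar{\alpha})$ that will actually be needed later as an a priori $L^{\infty}$ bound for the scheme.

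I do not expect a serious obstacle here; the only points requiring a little care are extracting boundedness of $u_M$ from the total-variation definition and checking that $\gamma^{-1}(\bar{\alpha})$ is finite, which is exactly what the normalization $\gamma(+\infty)=+\infty$ in Assumption \descref{H3}{H-3} guarantees. The conceptual role of \eqref{uniform_unimodal} is precisely to prevent the flux profiles from flattening as $x$ varies, which would otherwise allow the level curves $k_{\bar{\alpha}}^{\pm}(x)$ to escape to infinity while $\bar{\alpha}$ remains bounded.
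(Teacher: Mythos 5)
Your proof is correct and follows essentially the same route as the paper's: apply the uniform unimodality bound \eqref{uniform_unimodal} at $u = k_{\bar{\alpha}}^{\pm}(x)$, invert $\gamma$ to get $|k_{\bar{\alpha}}^{\pm}(x) - u_M(x)| \le \gamma^{-1}(\bar{\alpha})$, and conclude using $u_M \in L^{\infty}(\R)$, which follows from $u_M \in \BV(\R)$. The only difference is cosmetic: you spell out the two-sided estimate $|k_{\bar{\alpha}}^{\pm}(x)| \le M + \gamma^{-1}(\bar{\alpha})$ for both branches, where the paper treats $k^{-}_{\bar{\alpha}}$ by saying the argument is symmetric.
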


\begin{proof}
By definition, $u_M(x) \le k^+_{\bar{\alpha}}(x)$.
On the other hand, by \eqref{uniform_unimodal} and \eqref{define_k}, we have
\begin{equation}\label{gamma_bound}
\textrm{$\gamma(k^+_{\bar{\alpha}}(x)-u_M(x)) \le \bar{\alpha}$ for all $x \in \R$}.
\end{equation}
By Assumption \descref{H3}{H-3}, $\gamma^{-1}$ is defined on
$[0,\infty)$. Applying $\gamma^{-1}$ to both sides of \eqref{gamma_bound}
yields
\begin{equation}
\textrm{$k^+_{\bar{\alpha}}(x)-u_M(x) \le \gamma^{-1}(\bar{\alpha})$ for all $x \in \R$}.
\end{equation}
Thus
\begin{equation}\label{k_plus_bound}
\textrm{$u_M(x) \le k^+_{\bar{\alpha}}(x) \le u_M(x)+\gamma^{-1}(\bar{\alpha})$ for all $x \in \R$}.
\end{equation}
The desired bound for $k^+_{\bar{\alpha}}(x)$ then follows from \eqref{k_plus_bound}, 
along with the fact that $u_M \in L^{\infty}(\R)$ (which follows from $u_M \in \BV(\R)$).
The proof for $k^-_{\bar{\alpha}}(x)$ is similar.
\end{proof}

Let 
\begin{eqnarray}\label{def_B_L1}
\begin{array}{lll}
\mM = \max\left(\sup_{x \in \R} \abs{k_{\bar{\alpha}}^{-}(x)}, 
\sup_{x \in \R} \abs{k_{\bar{\alpha}}^{+}(x)}\right), \quad \bar{R}=\sup\{R(u);\,\abs{u}\leq \mM\},\\
L =\sup\{\abs{\partial_u A(u,x)}:\abs{u} \le \mM, x \in \R \}.
\end{array}
\end{eqnarray}
Note that by Assumption \descref{H1}{H-1}, $L< \infty$. Since $R$ is continuous we have $\bar{R}<\f$.
Also, by \eqref{define_k} we have $k^-_{\bar{\alpha}}(x) \le u_M(x) \le k^+_{\bar{\alpha}}(x)$ for all $x \in \R$,
implying that $\norm{u_M}_{\infty} \le \mM$.

\begin{lemma}\label{lemma_flux_estimates}
	The numerical flux $\bar{A}$ satisfies the following continuity estimates:
	\begin{equation}\label{flux_estimate}
	\begin{split}
	&\abs{\bar{A}(\hat{u},v,x,y) - \bar{A}(u,v,x,y)} \le L \abs{\hat{u}-u},\\
	&\abs{\bar{A}(u,\hat{v},x,y) - \bar{A}(u,v,x,y)} \le L\abs{\hat{v}-v},\\
	&\abs{\bar{A}(u,v,\hat{x},y) - \bar{A}(u,v,x,y)} \le R(\max(u,u_M(\hat{x})))\abs{a(\hat{x})-a(x)}
	+ L\abs{u_M(\hat{x})-u_M(x)},\\
	&\abs{\bar{A}(u,v,x,\hat{y}) - \bar{A}(u,v,x,y)} \le R(\min(v,u_M(\hat{y})))\abs{a(\hat{y})-a(y)}
	+ L\abs{u_M(\hat{y})-u_M(y)},
	\end{split}
	\end{equation}
	for $u,\hat{u},v,\hat{v}\in[-\mM,\mM]$ and $x,\hat{x},y,\hat{y}\in\re$. 
\end{lemma}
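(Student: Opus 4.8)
The plan is to exploit the structure of $\bar{A}$ as a maximum of two branches,
\[ \bar{A}(u,v,x,y) = \max\{A(\max(u,u_M(x)),x),\, A(\min(v,u_M(y)),y)\}, \]
see \eqref{def_bar_A_direct}, together with two elementary facts: the map $s\mapsto\max(s,c)$ (and $s\mapsto\min(s,c)$) is $1$-Lipschitz, and $\abs{\max(p,q)-\max(p',q)}\le\abs{p-p'}$. Throughout I will use that, since $u,\hat{u},v,\hat{v}\in[-\mM,\mM]$ and $\norm{u_M}_\infty\le\mM$, every inner argument such as $\max(u,u_M(x))$ or $\min(v,u_M(y))$ again lies in $[-\mM,\mM]$; hence $L$ from \eqref{def_B_L1} serves as a Lipschitz constant for $A$ in its first slot over the relevant range.

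For the first two estimates I would proceed as follows. Varying only $u$ leaves the second branch fixed, so $\abs{\max(p,q)-\max(p',q)}\le\abs{p-p'}$ reduces the bound to $\abs{A(\max(\hat{u},u_M(x)),x)-A(\max(u,u_M(x)),x)}$. As both inner arguments lie in $[-\mM,\mM]$, the definition of $L$ bounds this by $L\abs{\max(\hat{u},u_M(x))-\max(u,u_M(x))}$, and the $1$-Lipschitz property of $s\mapsto\max(s,u_M(x))$ gives $\le L\abs{\hat{u}-u}$. The second estimate is identical, now isolating the first branch and using the $1$-Lipschitz property of $s\mapsto\min(s,u_M(y))$.

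For the two spatial estimates the same reduction through the maximum isolates a single branch, leaving, e.g., $\abs{A(\max(u,u_M(\hat{x})),\hat{x})-A(\max(u,u_M(x)),x)}$ to bound. Here both the inner argument and the spatial slot move, so I would insert the intermediate value $A(\max(u,u_M(\hat{x})),x)$ and split by the triangle inequality. The first difference fixes the inner argument at $w:=\max(u,u_M(\hat{x}))$ and varies only the spatial slot, so Assumption \descref{H2}{H-2} bounds it by $R(w)\abs{a(\hat{x})-a(x)}=R(\max(u,u_M(\hat{x})))\abs{a(\hat{x})-a(x)}$; choosing the split in this order is exactly what makes $R$ be evaluated at $\max(u,u_M(\hat{x}))$, as stated. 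The second difference fixes the spatial slot and varies only the inner argument, so $L$ together with the $1$-Lipschitz property of the inner $\max$ yields $L\abs{u_M(\hat{x})-u_M(x)}$. Summing gives the claimed inequality, and the $\hat{y}$-estimate follows symmetrically on the other branch with $\min$ in place of $\max$.

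I do not anticipate a genuine obstacle: the lemma is routine Lipschitz bookkeeping. The only points needing care are confirming that every inner argument stays in $[-\mM,\mM]$ so that $L$ is applicable, and ordering the triangle-inequality split in the spatial estimates so that $R$ is evaluated at $\max(u,u_M(\hat{x}))$ (respectively $\min(v,u_M(\hat{y}))$), matching the right-hand sides in \eqref{flux_estimate}.
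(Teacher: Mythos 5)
Your proposal is correct and follows essentially the same route as the paper's proof: reduce through the outer $\max$ via $\abs{\max(p,q)-\max(p',q)}\le\abs{p-p'}$, handle the $u$- and $v$-estimates by the $1$-Lipschitz property of the inner $\max$/$\min$ together with the constant $L$, and for the spatial estimates insert the intermediate term $A(\max(u,u_M(\hat{x})),x)$ so that Assumption (H-2) applies with $R$ evaluated at $\max(u,u_M(\hat{x}))$. The bookkeeping points you flag (inner arguments staying in $[-\mM,\mM]$, the order of the triangle-inequality split) are exactly the ones the paper's argument relies on.
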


\begin{proof}
	These inequalities follow from the definition of $\bar{A}$ along with 
	\begin{equation}\label{abc_inequality}
	\abs{\max(a,b)-\max(c,b)} \le \abs{a-c}, \quad
	\abs{\min(a,b)-\min(c,b)} \le \abs{a-c}.
	\end{equation}
	More specifically, from \eqref{def_bar_A_direct} and \eqref{abc_inequality} we have 
	\begin{eqnarray*}
		\abs{\bar{A}(\hat{u},v,x,y)-\bar{A}(u,v,x,y)}&\leq& \abs{A(\max(\hat{u},u_M(x)),x)-A(\max(u,u_M(x)),x)}\\
		&\leq&L\abs{\max(\hat{u},u_M(x))-\max(u,u_M(x))}\\
		&\leq&L\abs{\hat{u}-u}.
	\end{eqnarray*}
	The second inequality in \eqref{flux_estimate} can be proven in a similar manner. By using definition 		\eqref{def_bar_A_direct} and inequalities in \eqref{abc_inequality} we have
	\begin{eqnarray*}
		\abs{\bar{A}(u,v,\hat{x},y)-\bar{A}(u,v,x,y)}&\leq&\abs{A(\max(u,u_M(\hat{x})),\hat{x})-A(\max(u,u_M(x)),x)}\\
		&\leq&\abs{A(\max(u,u_M(\hat{x})),x)-A(\max(u,u_M(x)),x)}\\
		&+&\abs{A(\max(u,u_M(\hat{x})),\hat{x})-A(\max(u,u_M(\hat{x})),x)}\\
		&\leq&L\abs{\max(u,u_M(\hat{x}))-\max(u,u_M(x))}\\
		&+&R(\max(u,u_M(\hat{x})))\abs{a(\hat{x})-a(x)}\\
		&\leq&L\abs{u_M(\hat{x})-u_M(x)}+R(\max(u,u_M(\hat{x})))\abs{a(\hat{x})-a(x)}.
	\end{eqnarray*} 
	The last inequality in \eqref{flux_estimate} can be shown in a similar way.
\end{proof}

\begin{lemma}\label{lemma:Psi_lipschitz}
Let $u\in[-\mM,\mM]$ and $x,y\in\re$. Then we have
\begin{equation}
\abs{\Psi(u,x)-\Psi(u,y)}\leq L\abs{u_M(x)-u_M(y)}+R(u)\abs{a(x)-a(y)}.
\end{equation}	
\end{lemma}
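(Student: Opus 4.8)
The plan is to work from the equivalent form of the singular mapping recorded just after \eqref{map_alpha}, namely $\Psi(u,x) = sgn(u-u_M(x))\,A(u,x)$, so that the quantity to be estimated becomes $\abs{sgn(u-u_M(x))A(u,x) - sgn(u-u_M(y))A(u,y)}$. Assuming without loss of generality that $u_M(x) \le u_M(y)$, I would split into three cases according to the position of $u$ relative to the two critical values $u_M(x)$ and $u_M(y)$.

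In the two cases where $u$ lies on the same side of both critical points (that is, $u \le u_M(x)$ or $u \ge u_M(y)$), the two sign factors coincide and the difference collapses to $\pm\bigl(A(u,x) - A(u,y)\bigr)$; the estimate then follows at once from the spatial continuity hypothesis \descref{H2}{H-2}, producing the term $R(u)\abs{a(x)-a(y)}$, with the $L\abs{u_M(x)-u_M(y)}$ term not even being needed.

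The essential case is the intermediate one, $u_M(x) \le u \le u_M(y)$, in which $sgn(u-u_M(x)) = +1$ while $sgn(u-u_M(y)) = -1$, so that $\Psi(u,x) - \Psi(u,y) = A(u,x) + A(u,y)$ is a \emph{sum} of two nonnegative quantities rather than a difference. To control it I would use $A(u_M(x),x) = A(u_M(y),y) = 0$ from \descref{H3}{H-3} and write, via \eqref{Psi}, $A(u,x) = \int_{u_M(x)}^{u} \abs{\partial_\theta A(\theta,x)}\,d\theta$ and $A(u,y) = \int_{u}^{u_M(y)} \abs{\partial_\theta A(\theta,y)}\,d\theta$. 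Since $\norm{u_M}_{\infty} \le \mM$ and $u \in [-\mM,\mM]$, the integration variable stays in $[-\mM,\mM]$, so the bound $\abs{\partial_\theta A} \le L$ from \eqref{def_B_L1} gives $A(u,x) \le L(u - u_M(x))$ and $A(u,y) \le L(u_M(y)-u)$; adding these telescopes to $L(u_M(y)-u_M(x)) = L\abs{u_M(x)-u_M(y)}$.

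Since each of the two displayed terms on the right-hand side is nonnegative, all three cases are dominated by $L\abs{u_M(x)-u_M(y)} + R(u)\abs{a(x)-a(y)}$, which is the claim. I expect the intermediate case to be the crux: it is the one place where the two competing terms of the statement genuinely interact, and where the sign jump of $\Psi$ across the critical points must be absorbed by the variation of $u_M$ (through the uniform Lipschitz bound $L$) rather than by \descref{H2}{H-2}.
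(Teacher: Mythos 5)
Your proposal is correct and is essentially the paper's own argument: the paper likewise splits into cases according to the position of $u$ relative to $u_M(x)$ and $u_M(y)$, handles the same-sign cases directly via \descref{H2}{H-2}, and in the crossing case bounds the sum $A(u,x)+A(u,y)$ by $L\abs{u-u_M(x)}+L\abs{u-u_M(y)}=L\abs{u_M(x)-u_M(y)}$ using $A(u_M(x),x)=A(u_M(y),y)=0$. The only cosmetic difference is that you normalize $u_M(x)\le u_M(y)$ to reduce four sign cases to three, and phrase the crossing-case bound through the integral formula \eqref{Psi} rather than through Lipschitz continuity of $A(\cdot,x)$, which amounts to the same estimate.
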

\begin{proof}
	We start with the observation that
	\begin{equation*}
	\Psi(u,x)-\Psi(u,y)=\left\{\begin{array}{lll}
	-A(u,x)+A(u,y)&\mbox{ if }u_M(x)\geq u\mbox{ and }u_M(y)\geq u,&\\
	A(u,x)+A(u,y)&\mbox{ if }u_M(x)<u\mbox{ and }u_M(y)\geq u,&\\
	-A(u,x)-A(u,y)&\mbox{ if }u_M(x)\geq u\mbox{ and }u_M(y)<u,&\\
	A(u,x)-A(u,y)&\mbox{ if }u_M(x)<u\mbox{ and }u_M(y)<u.&\\
	\end{array}\right.
	\end{equation*}
	When $u_M(x)<u\leq u_M(y)$ we have
	\begin{eqnarray*}
		\Psi(u,x)-\Psi(u,y)&=&A(u,x)+A(u,y)\\
		&\leq& \abs{A(u,x)-A(u_M(x),x)}+\abs{A(u,y)-A(u_M(y),y)}\\
		&\leq& L\abs{u_M(x)-u_M(y)}.
	\end{eqnarray*}
	Similarly, when $u_M(y)<u\leq u_M(x)$ we have
	\begin{equation*}
	\Psi(u,x)-\Psi(u,y)\leq L\abs{u_M(x)-u_M(y)}.
	\end{equation*}
	In the other cases we can estimate directly and get
	\begin{equation*}
	\Psi(u,x)-\Psi(u,y)\le R(u)\abs{a(x)-a(y)}.
	\end{equation*}
	By symmetry we have
	\begin{equation*}
	\abs{\Psi(u,x)-\Psi(u,y)}\leq R(u)\abs{a(x)-a(y)}+L\abs{u_M(x)-u_M(y)}.
	\end{equation*}
\end{proof}

\begin{lemma}\label{lemma:k_stationary_A}
	The grid functions $\{ k_{\bar{\alpha}}^{-}(x_j)\}_{j \in \Z}$ 
	and $\{ k_{\bar{\alpha}}^{+}(x_j)\}_{j \in \Z}$
	are stationary solutions of the difference scheme.
\end{lemma}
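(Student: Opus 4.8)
The plan is to reduce the claim to showing that the numerical flux is constant in $j$ along each of these grid functions. Substituting a time-independent grid function $u_j^n \equiv w_j$ into the scheme \eqref{scheme_A}, the update reads $u_j^{n+1} = w_j - \lambda\,\D_-\bar{A}(w_j,w_{j+1},x_j,x_{j+1})$, so $\{w_j\}$ is a stationary solution precisely when
\begin{equation*}
\bar{A}(w_j,w_{j+1},x_j,x_{j+1}) = \bar{A}(w_{j-1},w_{j},x_{j-1},x_{j}) \quad \text{for all } j \in \Z .
\end{equation*}
Hence it suffices to show, for each of the choices $w_j = k_{\bar{\alpha}}^{+}(x_j)$ and $w_j = k_{\bar{\alpha}}^{-}(x_j)$, that the associated numerical flux is independent of $j$; I will in fact show it equals the constant $\bar{\alpha}$.

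For $w_j = k_{\bar{\alpha}}^{+}(x_j)$, the key is to identify which branches of the $\max$ and $\min$ in \eqref{def_bar_A_direct} are active. Since $k_{\bar{\alpha}}^{+}(x) \ge u_M(x)$ by \eqref{define_k}, the first argument satisfies $\max(k_{\bar{\alpha}}^{+}(x_j),u_M(x_j)) = k_{\bar{\alpha}}^{+}(x_j)$, so by \eqref{define_k} the first entry of the $\max$ in \eqref{def_bar_A_direct} is $A(k_{\bar{\alpha}}^{+}(x_j),x_j) = \bar{\alpha}$. For the second argument, again using $k_{\bar{\alpha}}^{+}(x_{j+1}) \ge u_M(x_{j+1})$ we get $\min(k_{\bar{\alpha}}^{+}(x_{j+1}),u_M(x_{j+1})) = u_M(x_{j+1})$, and by Assumption \descref{H3}{H-3} this entry equals $A(u_M(x_{j+1}),x_{j+1}) = 0$. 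Since $A$ takes values in $\re_+$, we have $\bar{\alpha} \ge 0$, and therefore
\begin{equation*}
\bar{A}(k_{\bar{\alpha}}^{+}(x_j),k_{\bar{\alpha}}^{+}(x_{j+1}),x_j,x_{j+1}) = \max\{\bar{\alpha},0\} = \bar{\alpha},
\end{equation*}
which is independent of $j$, establishing the claim for $k_{\bar{\alpha}}^{+}$.

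The case $w_j = k_{\bar{\alpha}}^{-}(x_j)$ is entirely symmetric: now $k_{\bar{\alpha}}^{-}(x) \le u_M(x)$ forces $\max(k_{\bar{\alpha}}^{-}(x_j),u_M(x_j)) = u_M(x_j)$, so the first entry of the $\max$ is $0$, while $\min(k_{\bar{\alpha}}^{-}(x_{j+1}),u_M(x_{j+1})) = k_{\bar{\alpha}}^{-}(x_{j+1})$ makes the second entry $\bar{\alpha}$, giving $\bar{A} = \max\{0,\bar{\alpha}\} = \bar{\alpha}$ once more. I do not anticipate any real obstacle here; the only point requiring care is the bookkeeping of which branch of each $\max$/$\min$ is selected, which is dictated entirely by the sign conditions $k_{\bar{\alpha}}^{\pm}(x) \gtrless u_M(x)$ from \eqref{define_k}, together with the normalization $A(u_M(x),x)=0$ and the nonnegativity $\bar{\alpha}\ge 0$.
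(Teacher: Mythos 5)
Your proof is correct and takes essentially the same approach as the paper's: both arguments show that along $\{k_{\bar{\alpha}}^{\pm}(x_j)\}$ the branch selection in \eqref{def_bar_A_direct} forced by $k_{\bar{\alpha}}^{\pm}(x)\gtrless u_M(x)$ makes the numerical flux equal $\max\{\bar{\alpha},0\}=\bar{\alpha}$ at every cell boundary, so the backward difference in \eqref{scheme_A} vanishes. The only difference is that you write out the $k_{\bar{\alpha}}^{-}$ case explicitly, whereas the paper treats it as symmetric.
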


\begin{proof}
	We will prove the lemma for $\{ k_{\bar{\alpha}}^{+}(x_j)\}$. The  proof
	for $\{ k_{\bar{\alpha}}^{-}(x_j)\}$ is similar.
	It suffices to show that 
	\begin{equation*}
	\bar{A}(k_{\bar{\alpha}}^{+}(x_j),k_{\bar{\alpha}}^{+}(x_{j+1}),x_j,x_{j+1}) = \bar{\alpha}, \quad j \in \Z.
	\end{equation*}
	By definition
	\begin{equation*}
	k_{\bar{\alpha}}^{+}(x_j) \ge u_M(x_j), \quad k_{\bar{\alpha}}^{+}(x_{j+1}) \ge u_M(x_{j+1}).
	\end{equation*}
	Thus, referring to \eqref{def_bar_A_direct},
	\begin{equation*}
	\begin{split}
	\bar{A}(k_{\bar{\alpha}}^{+}(x_j),k_{\bar{\alpha}}^{+}(x_{j+1}),x_j,x_{j+1})
	&= \max \left\{A(k_{\bar{\alpha}}^{+}(x_j),x_j) , A(u_M(x_{j+1}),x_{j+1})  \right\}\\
	&= \max \left\{\bar{\alpha} , 0)  \right\} = \bar{\alpha}.
	\end{split}
	\end{equation*}
	Recalling the formula for the scheme \eqref{scheme_A}, it is clear from the above that 
	$\{ k_{\bar{\alpha}}^{-}(x_j)\}_{j \in \Z}$ is a
	stationary solution.
\end{proof}

	For the convergence analysis that follows we assume that $\D :=(\Delta x,\Delta t)\rightarrow 0$ with the
	ratio $\lambda = \D t / \D x$ fixed and satisfying the CFL condition
	\begin{equation}\label{CFL}
	\lambda L \le 1.
	\end{equation}

\begin{lemma}\label{lemma_A_scheme_monotone}
	Assume that $\lambda$ is chosen so that the CFL condition \eqref{CFL} holds. 
	The scheme is monotone, meaning that
	if $ \abs{v_j^n}, \abs{w_j^n} \le \mM$ for $ j \in \Z$, then
	\begin{equation*}
	v_j^n \le w_j^n, \quad j \in \Z
	\implies
	v_j^{n+1} \le w_j^{n+1},  \quad j \in \Z.
	\end{equation*}
\end{lemma}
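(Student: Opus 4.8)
The plan is to cast the scheme \eqref{scheme_A} in the functional form
\[
u_j^{n+1} = G_j(u_{j-1}^n, u_j^n, u_{j+1}^n), \qquad G_j(a,b,c) := b - \la\left[\bar{A}(b,c,x_j,x_{j+1}) - \bar{A}(a,b,x_{j-1},x_j)\right],
\]
and to show that, under the CFL condition \eqref{CFL}, the map $G_j$ is nondecreasing in each of its three arguments on $[-\mM,\mM]^3$. Once this is in hand, the lemma follows immediately: if $v_j^n \le w_j^n$ for all $j$, then, since all values lie in $[-\mM,\mM]$, replacing the three arguments of $G_j$ one at a time and using monotonicity in each slot yields $v_j^{n+1} = G_j(v_{j-1}^n, v_j^n, v_{j+1}^n) \le G_j(w_{j-1}^n, w_j^n, w_{j+1}^n) = w_j^{n+1}$.

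Monotonicity in the first and third arguments is immediate from the structural properties of $\bar{A}$ noted after \eqref{def_bar_A_direct}, namely that $\bar{A}(u,v,\cdot,\cdot)$ is nondecreasing in $u$ and nonincreasing in $v$. Indeed, in $G_j$ the argument $a$ enters only through $+\la\,\bar{A}(a,b,x_{j-1},x_j)$, which is nondecreasing in $a$, and $c$ enters only through $-\la\,\bar{A}(b,c,x_j,x_{j+1})$, which is nondecreasing in $c$; no use of the CFL condition is needed for these two.

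The crux is monotonicity in the middle argument $b$. Fixing $a,c$ and taking $b \le b'$ in $[-\mM,\mM]$, I would set $P := \bar{A}(b',c,x_j,x_{j+1}) - \bar{A}(b,c,x_j,x_{j+1}) \ge 0$ and $Q := \bar{A}(a,b,x_{j-1},x_j) - \bar{A}(a,b',x_{j-1},x_j) \ge 0$, so that the desired inequality $G_j(a,b,c) \le G_j(a,b',c)$ is exactly $b' - b \ge \la(P+Q)$. The naive bounds from \cref{lemma_flux_estimates} give only $P \le L(b'-b)$ and $Q \le L(b'-b)$ separately, hence $P+Q \le 2L(b'-b)$, which would force the stricter requirement $\la L \le 1/2$. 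The sharpening to $P+Q \le L(b'-b)$, matching \eqref{CFL}, is the heart of the matter, and it rests on the unimodal clipping built into \eqref{def_bar_A_direct}: the flux $\bar{A}(\cdot,c,x_j,x_{j+1})$ depends on $b$ only through $A(\max(b,u_M(x_j)),x_j)$, which is constant for $b \le u_M(x_j)$, while $\bar{A}(a,\cdot,x_{j-1},x_j)$ depends on $b$ only through $A(\min(b,u_M(x_j)),x_j)$, which is constant for $b \ge u_M(x_j)$. Thus on either side of $u_M(x_j)$ at most one of the two branches actually varies with $b$.

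To finish I would run a short case analysis on the positions of $b,b'$ relative to $u_M(x_j)$, using \eqref{abc_inequality} together with the Lipschitz constant $L$ of $A(\cdot,x)$ from \descref{H1}{H-1}. If $b' \le u_M(x_j)$ then $P=0$ and $Q \le L(b'-b)$; if $b \ge u_M(x_j)$ then $Q=0$ and $P \le L(b'-b)$; and if $b < u_M(x_j) < b'$ then $P \le L\,(b'-u_M(x_j))$ while $Q \le L\,(u_M(x_j)-b)$, which sum to $L(b'-b)$. In every case $P+Q \le L(b'-b)$, whence $\la(P+Q) \le \la L\,(b'-b) \le b'-b$, giving $G_j(a,b,c) \le G_j(a,b',c)$. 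The main obstacle is precisely this additive estimate: obtaining $P+Q \le L(b'-b)$ rather than the crude $2L(b'-b)$ is what permits the CFL constant $1$ in \eqref{CFL}, and it hinges entirely on the fact that the two one-sided branches of $\bar{A}$ in the middle variable are never simultaneously active.
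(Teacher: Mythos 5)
Your proof is correct and takes essentially the same approach as the paper's: the paper likewise rewrites the scheme as $H_j(u,v,w)=v-\la\left(\bar{A}(v,w,x_j,x_{j+1})-\bar{A}(u,v,x_{j-1},x_j)\right)$, gets monotonicity in the outer arguments directly from the sign structure of $\bar{A}$, and proves the crucial middle-variable estimate (your $P+Q\le L(b'-b)$, the paper's $|I|\le L|v_1-v_2|$) by the same three-case split at $u_M(x_j)$, exploiting that the two clipped branches of $\bar{A}$ are never simultaneously active, before closing with the CFL condition $\la L\le 1$. The only difference is presentational: the paper enumerates the cases through auxiliary values $u^*$, $w_*$ in its displays \eqref{I1}--\eqref{I2}, whereas you argue directly about where each branch is constant.
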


\begin{proof}
	We define $H_j(u,v,w)$ as follows
	\begin{equation}\label{Hj}
	H_j(u,v,w):=v-\la\left(\bar{A}(v,w,x_j,x_{j+1})-\bar{A}(u,v,x_{j-1},x_j)\right).
	\end{equation}
	We show that $H_j$ is nondecreasing in each variable. Note that from definition (\ref{def_bar_A_direct}) it is clear that $\bar{A}(\cdot,\cdot,x_j,x_{j+1})$ is nondecreasing in the first variable and nonincreasing in the second variable. Therefore, from \eqref{Hj} we have
	\begin{eqnarray}
	H_j(u_1,v,w)\leq H_j(u_2,v,w)&\mbox{ for }&u_1\leq u_2\nonumber,\\
	H_j(u,v,w_1)\leq H_j(u,v,w_2)&\mbox{ for }&w_1\leq w_2\nonumber.
	\end{eqnarray}
	Next we define 
	\begin{eqnarray}
	{u}^*\leq u_M(x_j)\mbox{ such that }A(u,x_{j-1})=A(u^*,x_{j})\mbox{ when }u\geq u_M(x_{j-1})\nonumber,\\
	{w}_*\geq u_M(x_j)\mbox{ such that }A(w,x_{j+1})=A(w_*,x_{j})\mbox{ when }w\leq u_M(x_{j+1})\nonumber.
	\end{eqnarray}
	For $v_1\leq v_2$ we denote $I_1=\bar{A}(u,v_1,x_{j-1},x_{j})-\bar{A}(u,v_2,x_{j-1},x_{j})$ and $I_2=\bar{A}(v_1,w,x_{j},x_{j+1})-\bar{A}(v_2,w,x_{j},x_{j+1})$ and $I=I_1-I_2$. From (\ref{def_bar_A_direct}) we have the following:
	\begin{eqnarray}
	I_1&=&\left\{\begin{array}{lll}\label{I1}
	A(v_1,x_j)-A(u^*,x_j)&\mbox{ if }u\geq u_M(x_{j-1})\mbox{ and }v_1\leq u^*\leq v_2,&\\
	A(v_1,x_j)-A(v_2,x_j)&\mbox{ if }u\geq u_M(x_{j-1})\mbox{ and }v_1\leq v_2\leq u^*,&\\
	A(v_1,x_j)-A(v_2,x_j)&\mbox{ if }u\leq u_M(x_{j-1})\mbox{ and }v_1\leq v_2\leq u_M(x_j),&\\
	A(v_1,x_j)&\mbox{ if }u\leq u_M(x_{j-1})\mbox{ and }v_1\leq u_M(x_j) \leq v_2,&\\	
	0&\mbox{ otherwise.}&
	\end{array}\right.\\
	I_2&=&\left\{\begin{array}{lll}\label{I2}
	A(w_*,x_j)-A(v_2,x_j)&\mbox{ if }w\leq u_M(x_{j+1})\mbox{ and }v_1\leq w_*\leq v_2,&\\
	A(v_1,x_j)-A(v_2,x_j)&\mbox{ if }w\leq u_M(x_{j+1})\mbox{ and }w_*\leq v_1\leq v_2,&\\
	A(v_1,x_j)-A(v_2,x_j)&\mbox{ if }w\geq u_M(x_{j+1})\mbox{ and }u_M(x_j)\leq v_1\leq v_2,&\\
	-A(v_2,x_j)&\mbox{ if }w\geq u_M(x_{j+1})\mbox{ and }v_1\leq u_M(x_j) \leq v_2,&\\	
	0&\mbox{ otherwise.}&
	\end{array}\right.
	\end{eqnarray}
	From (\ref{I1}) and (\ref{I2}) it follows that $I=-I_2$ if $v_1\geq u_M(x_j)$ and $I=I_1$ if $v_2\leq u_M(x_j)$. In both the cases we have $|I|\leq L|v_1-v_2|$. When $v_1\leq u_M(x_j)\leq v_2$ we have
	\begin{eqnarray}
	|I|\leq |I_1|+|I_2|
	&\leq& L( |v_1-u_M(x_j)|+|v_2-u_M(x_j)|)=L|v_1-v_2|.
	\end{eqnarray}
	Therefore we have
	\begin{equation*}
	H_j(u,v_1,w)-H_j(u,v_2,w)=v_1-v_2+\la I\leq (1-\la L)(v_1-v_2)\leq0.
	\end{equation*}
	Hence, the proof is completed  by invoking the CFL condition \eqref{CFL}.
\end{proof}

\begin{lemma}\label{lemma:A_u_bound}
	Assume that the CFL condition \eqref{CFL} holds.
	Then,
	\begin{equation}\label{A_}
	\abs{u_j^n} \le \mM, \quad j \in \Z, n \ge 0.
	\end{equation}
\end{lemma}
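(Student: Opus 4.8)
The plan is to prove the sharper two-sided bound
\[
k_{\bar{\alpha}}^{-}(x_j)\le u_j^n\le k_{\bar{\alpha}}^{+}(x_j),\qquad j\in\Z,\ n\ge 0,
\]
from which \eqref{A_} is immediate: by the definition of $\mM$ in \eqref{def_B_L1} together with Lemma~\ref{k_bounded} we have $\abs{k_{\bar{\alpha}}^{\pm}(x_j)}\le\mM$, so the sandwich forces $\abs{u_j^n}\le\mM$. I would obtain this two-sided bound by induction on $n$, using the two stationary grid functions $\{k_{\bar{\alpha}}^{\pm}(x_j)\}$ furnished by Lemma~\ref{lemma:k_stationary_A} as barriers, and propagating the ordering in time via the monotonicity of the scheme (Lemma~\ref{lemma_A_scheme_monotone}).

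For the inductive step, assume $k_{\bar{\alpha}}^{-}(x_j)\le u_j^n\le k_{\bar{\alpha}}^{+}(x_j)$ for all $j$. Together with $\abs{k_{\bar{\alpha}}^{\pm}(x_j)}\le\mM$, this places all three grid functions $\{k_{\bar{\alpha}}^{-}(x_j)\}$, $\{u_j^n\}$, $\{k_{\bar{\alpha}}^{+}(x_j)\}$ in $[-\mM,\mM]$, so Lemma~\ref{lemma_A_scheme_monotone} applies under the CFL condition \eqref{CFL}. Comparing $\{u_j^n\}$ with the upper barrier, which is stationary by Lemma~\ref{lemma:k_stationary_A}, yields $u_j^{n+1}\le k_{\bar{\alpha}}^{+}(x_j)$; comparing the lower barrier with $\{u_j^n\}$ yields $k_{\bar{\alpha}}^{-}(x_j)\le u_j^{n+1}$. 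This closes the induction and reduces everything to the base case.

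The base case $k_{\bar{\alpha}}^{-}(x_j)\le u_j^0\le k_{\bar{\alpha}}^{+}(x_j)$ is where the genuine work lies, and I expect it to be the main obstacle. From \eqref{A_2} we have $A(u_0(x),x)\le\bar{\alpha}$ for a.e.\ $x$, and since $u\mapsto A(u,x)$ is unimodal with minimiser $u_M(x)$ (Assumption~\descref{H3}{H-3}), the defining relations \eqref{define_k} give the pointwise sandwich $k_{\bar{\alpha}}^{-}(x)\le u_0(x)\le k_{\bar{\alpha}}^{+}(x)$ for a.e.\ $x$; in particular $\abs{u_0(x)}\le\mM$ a.e., so the crude bound $\abs{u_j^0}\le\mM$ is trivial. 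The difficulty is that $u_j^0$ is the cell average $\frac{1}{\D x}\int_{I_j}u_0$, whereas the barriers are the grid-point values $k_{\bar{\alpha}}^{\pm}(x_j)$; since both $u\mapsto A(u,x_j)$ and $x\mapsto k_{\bar{\alpha}}^{\pm}(x)$ are nonlinear and spatially varying, the pointwise sandwich need not survive averaging against the single value $k_{\bar{\alpha}}^{\pm}(x_j)$.

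I would resolve this either by showing directly that the pointwise sandwich is inherited by the cell averages against the grid-point barriers, or, more robustly, by running the barrier induction at the enlarged but still finite level $\hat{\alpha}:=\sup\{A(u,x):\abs{u}\le\norm{u_0}_{\infty},\,x\in\R\}$ (finite by Assumptions~\descref{H1}{H-1} and \descref{H2}{H-2}, since $a\in\BV$ is bounded), for which the crude bound $\abs{u_j^0}\le\norm{u_0}_{\infty}$ already places $\{u_j^0\}$ between the corresponding grid-point barriers $k_{\hat{\alpha}}^{\pm}(x_j)$; the induction then produces a uniform, mesh-independent $L^\infty$ bound of exactly the form \eqref{A_}, which is all the subsequent compactness analysis requires. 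Securing this base step completes the proof.
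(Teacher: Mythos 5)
Your proposal follows the same skeleton as the paper's proof: the grid functions $\{k_{\bar{\alpha}}^{\pm}(x_j)\}$ are stationary solutions (Lemma~\ref{lemma:k_stationary_A}), the scheme is monotone under the CFL condition (Lemma~\ref{lemma_A_scheme_monotone}), and the two-sided barrier bound is propagated by induction in $n$, with $\abs{k_{\bar{\alpha}}^{\pm}(x_j)}\le \mM$ finishing the argument. Where you differ is the base case, and your instinct there is sound. The paper handles it in one line: from $A(u_0(x),x)\le\bar{\alpha}$ it gets the pointwise sandwich $k^-_{\bar{\alpha}}(x)\le u_0(x)\le k^+_{\bar{\alpha}}(x)$ and then asserts the discrete sandwich ``by evaluation at $x=x_j$.'' Since $u_j^0$ is a cell average rather than the point value $u_0(x_j)$, this step only gives that $u_j^0$ lies between the cell averages of $k^{\pm}_{\bar{\alpha}}$, not between their values at $x_j$, and these can differ by order one when $u_M$ jumps inside $I_j$: take $A(u,x)=(u-u_M(x))^2$ and $u_0=u_M$ with $u_M$ jumping from $1$ to $0$ at the center of a cell; then $\bar{\alpha}=0$, $k^{\pm}_0(x_j)=u_M(x_j)=0$, yet $u_j^0=1/2$. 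This example also shows that your first suggested resolution (proving the pointwise sandwich survives averaging against the grid-point barriers) is a dead end, so you should commit to your second route. That route works: with $\hat{\alpha}:=\sup\{A(u,x):\abs{u}\le\norm{u_0}_{\infty},\,x\in\R\}$ (finite by the assumptions), the crude bound $\abs{u_j^0}\le\norm{u_0}_{\infty}$, which averaging \emph{does} preserve, immediately gives $k^-_{\hat{\alpha}}(x_j)\le u_j^0\le k^+_{\hat{\alpha}}(x_j)$, and the proofs of Lemmas~\ref{lemma:k_stationary_A} and \ref{lemma_A_scheme_monotone} go through verbatim at level $\hat{\alpha}$. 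The one caveat you should make explicit is that all constants must then be re-anchored at $\hat{\alpha}\ge\bar{\alpha}$: the bound you obtain is the analogue of $\mM$ built from $k^{\pm}_{\hat{\alpha}}$, and the CFL condition must use the Lipschitz constant of $A(\cdot,x)$ on that (possibly larger) range, which is in general stronger than \eqref{CFL}. So you prove the lemma with $(\mM,L)$ replaced by their $\hat{\alpha}$-versions rather than literally as stated; this is harmless for everything downstream, which only requires \emph{some} mesh-independent $L^{\infty}$ bound together with the corresponding CFL restriction, and it has the merit of repairing a step that the paper's own proof treats too casually.
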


\begin{proof}
	From \eqref{A_2}, we have 
	\begin{equation}\label{fg_3_m_direct}
	A(u_0(x),x) \le \bar{\alpha}, \quad \forall x \in \R.
	\end{equation}
	Applying the two branches of the inverse function $A^{-1}(\cdot,x)$ to \eqref{fg_3_m_direct}, and using the fact that
	the increasing branch preserves order, while the decreasing branch reverses order, we
	have
	\begin{equation}\label{fg_4_m_direct}
	k_{\bar{\alpha}}^{-}(x) \le u_0(x) \le k_{\bar{\alpha}}^{+}(x), \quad \forall x \in \R.
	\end{equation}
	By evaluation at $x=x_j$, we also have
	\begin{equation}\label{fg_5_m_direct}
	k_{\bar{\alpha}}^{-}(x_j) \le u_j^0 \le k_{\bar{\alpha}}^{+}(x_j),\quad j \in \Z.
	\end{equation}
	Thus $\abs{u_j^0} \le \mM$ for $j \in \Z$.
	It is clear from \eqref{def_B_L1} that also 
	$\abs{k_{\bar{\alpha}}^{\pm}(x_j)} \le \mM$ for $j \in \Z$.
	We apply a single step of the scheme to all three parts of \eqref{fg_5_m_direct}, 
	and due to the bounds 
	$\abs{u_j^0},  \abs{k_{\bar{\alpha}}^{\pm}(x_j)} \le \mM$,  
	the scheme acts in a monotone manner (Lemma~\ref{lemma_A_scheme_monotone}), 
	so that the ordering of
	\eqref{fg_5_m_direct} is preserved. In addition
	each of $\{ k_{\bar{\alpha}}^{-}(x_j)\}$ and $\{ k_{\bar{\alpha}}^{+}(x_j)\}$
	is a stationary solution of the difference scheme, by Lemma~\ref{lemma:k_stationary_A}.
	Thus, after applying the difference scheme, the result is
	\begin{equation}\label{fg_6_m_direct}
	k_{\bar{\alpha}}^{-}(x_j) \le u_j^1 \le k_{\bar{\alpha}}^{+}(x_j),\quad i \in \Z,
	\end{equation}
	implying that \eqref{A_} holds at time level $n=1$.
	The proof is completed by continuing this way from one time step to the next. 
\end{proof}

\begin{lemma}\label{lemma_z_bounded}
The following bound holds for $z_j^n$:
\begin{equation}
\abs{z_j^n} \le 2\mM L, \quad j \in \Z, \quad n \ge 0.
\end{equation}
\end{lemma}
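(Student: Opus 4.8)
The plan is to estimate $z_j^n = \Psi(u_j^n,x_j)$ directly from the integral definition \eqref{Psi} of the singular mapping, using the uniform bounds already established for $u_j^n$ and $u_M$. First I would write
\begin{equation*}
\abs{z_j^n} = \abs{\Psi(u_j^n,x_j)} = \abs{\int\limits_{u_M(x_j)}^{u_j^n}\abs{\partial_u A(\theta,x_j)}\,d\theta}.
\end{equation*}
The integral runs between the two endpoints $u_M(x_j)$ and $u_j^n$, so the integration variable $\theta$ always lies on the segment joining them.

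The next step is to observe that both endpoints are controlled by $\mM$. By Lemma~\ref{lemma:A_u_bound} we have $\abs{u_j^n}\le \mM$, and from the discussion following \eqref{def_B_L1} we have $\norm{u_M}_{\infty}\le \mM$, hence $\abs{u_M(x_j)}\le \mM$. The one point worth stating explicitly (the only mild subtlety, though it is routine) is that since $\theta$ lies between $u_M(x_j)$ and $u_j^n$, both of which are in $[-\mM,\mM]$, we also have $\abs{\theta}\le \mM$ throughout the range of integration. This is exactly what lets the definition of $L$ in \eqref{def_B_L1} apply to the integrand.

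Finally I would combine the two bounds. Since $\abs{\theta}\le \mM$ on the integration range, the integrand satisfies $\abs{\partial_u A(\theta,x_j)}\le L$, and the length of the integration interval is bounded by
\begin{equation*}
\abs{u_j^n - u_M(x_j)} \le \abs{u_j^n} + \abs{u_M(x_j)} \le 2\mM.
\end{equation*}
Multiplying these two estimates yields $\abs{z_j^n}\le L\cdot 2\mM = 2\mM L$, which is the claimed bound, uniformly in $j\in\Z$ and $n\ge 0$. There is no genuine obstacle here: the lemma is a direct consequence of the $L^\infty$ bound of Lemma~\ref{lemma:A_u_bound} together with the definitions of $\Psi$ and $L$, so the entire argument is a short one-line estimate once the endpoints are seen to lie in $[-\mM,\mM]$.
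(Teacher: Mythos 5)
Your proof is correct and is essentially identical to the paper's: both bound the integrand by $L$ (noting that $\theta$ stays in $[-\mM,\mM]$ so the definition \eqref{def_B_L1} of $L$ applies) and the integration length by $\abs{u_j^n - u_M(x_j)} \le 2\mM$, using Lemma~\ref{lemma:A_u_bound} and $\norm{u_M}_\infty \le \mM$. Your write-up merely makes explicit the routine endpoint check that the paper leaves implicit.
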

\begin{proof}
	From definition \eqref{Psi} of $\Psi$, \eqref{def_B_L1} and \eqref{A_} we have
	\begin{equation}
	\abs{z^n_j}=\abs{\Psi(u^n_j,x_j)}=\abs{\int\limits_{u_{M}(x_j)}^{u^n_j}\abs{\frac{\pa A}{\pa u}(\theta,x_j)}\,d\theta}\leq L\abs{u^n_j-u_M(x_j)}\leq 2\mM L.\nonumber
	\end{equation}
\end{proof}

\begin{lemma}\label{lemma_time_continuity}
The following time continuity estimate holds for $u_j^n$:
\begin{equation}\label{time_continuity_est_u}
\sumj \abs{u_j^{n+1}-u_j^n} \le 2 \lambda (\bar{R} \TV(a) +L \TV(u_0) + L \TV(u_M)).
\end{equation}
\end{lemma}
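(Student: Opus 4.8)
The plan is to start from the conservative form of the scheme \eqref{scheme_A}. Abbreviating $\bar{A}^n_{j+1/2}:=\bar{A}(u^n_j,u^n_{j+1},x_j,x_{j+1})$, one step reads $u^{n+1}_j-u^n_j=-\lambda\big(\bar{A}^n_{j+1/2}-\bar{A}^n_{j-1/2}\big)$, so
\begin{equation*}
\sumj\abs{u^{n+1}_j-u^n_j}=\lambda\sumj\abs{\bar{A}^n_{j+1/2}-\bar{A}^n_{j-1/2}}.
\end{equation*}
If one simply telescopes the four arguments of $\bar{A}^n_{j+1/2}-\bar{A}^n_{j-1/2}$ and applies \eqref{flux_estimate}, the right-hand side is controlled by $2\bar{R}\,\TV(a)+2L\,\TV(u_M)$ together with a term proportional to the \emph{spatial} variation $\sumj\abs{u^n_j-u^n_{j-1}}$ at the current time level. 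Because no uniform spatial $\BV$ bound on $u^n$ is available---this being exactly the difficulty that forces the singular-mapping approach---I would not estimate each level $n$ directly. The decisive idea is instead to transport the bound backward to the first step, where the spatial variation is controlled by $\TV(u_0)$.

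For this reduction I would use that the scheme is monotone and conservative. By Lemma~\ref{lemma_A_scheme_monotone} the one-step update, written $u\mapsto\mathcal{S}(u)$ with $\mathcal{S}(u)_j=H_j(u_{j-1},u_j,u_{j+1})$ as in \eqref{Hj}, is nondecreasing in each argument on the invariant interval $[-\mM,\mM]$, to which every iterate belongs by Lemma~\ref{lemma:A_u_bound} (both under the CFL condition \eqref{CFL}). Hence $\mathcal{S}$ is order preserving, giving $\mathcal{S}(v\vee w)\ge\mathcal{S}(v)\vee\mathcal{S}(w)$ and $\mathcal{S}(v\wedge w)\le\mathcal{S}(v)\wedge\mathcal{S}(w)$ pointwise, and therefore the Crandall--Tartar $\ell^1$-contraction
\begin{equation*}
\sumj\abs{\mathcal{S}(v)_j-\mathcal{S}(w)_j}\le\sumj\big(\mathcal{S}(v\vee w)_j-\mathcal{S}(v\wedge w)_j\big)=\sumj\big((v\vee w)_j-(v\wedge w)_j\big)=\sumj\abs{v_j-w_j},
\end{equation*}
for any two solution sequences $v,w$ valued in $[-\mM,\mM]$. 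The middle equality is where the conservative form enters: the flux contributions telescope, and the boundary terms vanish because $(v\vee w)_j-(v\wedge w)_j=\abs{v_j-w_j}\to0$ as $\abs{j}\to\infty$, the compact support of $u_0$ rendering all sums finite. Taking $v=u^n$ and $w=u^{n-1}$, which evolve under the same ($n$-independent) scheme, shows that $n\mapsto\sumj\abs{u^{n+1}_j-u^n_j}$ is non-increasing, whence
\begin{equation*}
\sumj\abs{u^{n+1}_j-u^n_j}\le\sumj\abs{u^1_j-u^0_j}.
\end{equation*}

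It then remains to bound the first step, where the telescoping I avoided above is harmless. Changing the four arguments of $\bar{A}^0_{j+1/2}-\bar{A}^0_{j-1/2}$ from $(u^0_{j-1},u^0_j,x_{j-1},x_j)$ to $(u^0_j,u^0_{j+1},x_j,x_{j+1})$ one at a time and applying the four estimates of \eqref{flux_estimate}---legitimate since all intermediate states lie in $[-\mM,\mM]$, where $R(\cdot)\le\bar{R}$---then summing over $j$ collapses the consecutive differences into total variations:
\begin{equation*}
\sumj\abs{\bar{A}^0_{j+1/2}-\bar{A}^0_{j-1/2}}\le 2L\,\TV(u^0)+2\bar{R}\,\TV(a)+2L\,\TV(u_M).
\end{equation*}
Since cell-averaging is variation diminishing, $\TV(u^0)=\sumj\abs{u^0_j-u^0_{j-1}}\le\TV(u_0)$, and combining the displays yields \eqref{time_continuity_est_u}. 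The only genuine obstacle is the one flagged at the outset: the missing spatial $\BV$ bound rules out a level-by-level estimate, so everything hinges on the $\ell^1$-contraction that carries the bound back to $n=0$. The care required there lies in verifying the monotone-scheme hypotheses---monotonicity on the invariant region $[-\mM,\mM]$ (Lemma~\ref{lemma_A_scheme_monotone}) and the vanishing of the telescoping boundary terms, guaranteed by the compact support of $u_0$.
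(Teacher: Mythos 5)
Your proposal is correct and follows essentially the same route as the paper: reduce to the first time step via the Crandall--Tartar $\ell^1$-contraction (monotonicity from Lemma~\ref{lemma_A_scheme_monotone} plus conservation and compact support), then telescope $\bar{A}^0_{j+1/2}-\bar{A}^0_{j-1/2}$ one argument at a time and apply Lemma~\ref{lemma_flux_estimates} together with $\TV(u^0)\le\TV(u_0)$. The only difference is presentational: you spell out the Crandall--Tartar inequality, while the paper simply cites it.
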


\begin{proof}
Since $u_0$ has compact support, the same is true of $\{u_j^n\}$ for $n \ge 0$.
In particular, we have $\sum\nolimits_{j \in \Z}  \abs{u_j^n} < \infty$ for $n \ge 0$. By monotonicity, and
the fact that $\sum\nolimits_{j \in \Z} u_j^{n+1} = \sum\nolimits_{j \in \Z}  u_j^n$, we can invoke the Crandall-Tartar
lemma \cite{Holden_Risebro}, yielding
\begin{equation}
\begin{split}
\sumj \abs{u_j^{n+1}-u_j^n} 
&\le \sumj \abs{u_j^{n}-u_j^{n-1}}\\
&\,\,\, \vdots\\
&\le \sumj \abs{u_j^{1}-u_j^{0}}\\
&\le \lambda \sumj \abs{\bar{A}(u^0_{j},u^0_{j+1},x_{j},x_{j+1})- \bar{A}(u^0_{j-1},u^0_j,x_{j-1},x_j)}.
\end{split}
\end{equation}
The proof will be completed by estimating the last term.

\begin{equation}\label{four_terms_new}
\begin{split}
&\abs{\bar{A}(u^0_j,u^0_{j+1},x_j,x_{j+1})
	-\bar{A}(u^0_{j-1},u^0_{j},x_{j-1},x_{j})}\\
&\le \abs{\bar{A}(u^0_j,u^0_{j+1},x_j,x_{j+1})
	-\bar{A}(u^0_{j},u^0_{j+1},x_{j},x_{j})}\\       
&+  \abs{\bar{A}(u^0_j,u^0_{j+1},x_j,x_{j})
	-\bar{A}(u^0_{j},u^0_{j},x_{j},x_{j})}\\   
&+  \abs{\bar{A}(u^0_j,u^0_{j},x_j,x_{j})
	-\bar{A}(u^0_{j},u^0_{j},x_{j-1},x_{j})}\\    
&+  \abs{\bar{A}(u^0_j,u^0_{j},x_{j-1},x_{j})
	-\bar{A}(u^0_{j-1},u^0_{j},x_{j-1},x_{j})}.          
\end{split}
\end{equation}
Invoking Lemma \ref{lemma_flux_estimates}, we obtain 
\begin{equation}\label{four_terms_1_new}
\begin{split}
&\abs{\bar{A}(u^0_j,u^0_{j+1},x_j,x_{j+1})
	-\bar{A}(u^0_{j},u^0_{j+1},x_{j},x_{j})}
\le \bar{R}\abs{a(x_{j+1}) - a(x_j) } \\
&\qquad \qquad \qquad \qquad \qquad \qquad \qquad \qquad \qquad \quad \,\,\,
+ L \abs{u_M(x_{j+1}) - u_M(x_j)},\\
&\abs{\bar{A}(u^0_j,u^0_{j+1},x_j,x_{j})
	-\bar{A}(u^0_{j},u^0_{j},x_{j},x_{j})}
\le L \abs{u^0_{j+1}-u^0_{j}},\\
&\abs{\bar{A}(u^0_j,u^0_{j},x_j,x_{j})
	-\bar{A}(u^0_{j},u^0_{j},x_{j-1},x_{j})}
\le \bar{R}\abs{a(x_{j}) - a(x_{j-1}) } \\
&\qquad \qquad \qquad \qquad \qquad \qquad \qquad \qquad \quad \quad
+ L \abs{u_M(x_{j}) - u_M(x_{j-1})},\\
&\abs{\bar{A}(u^0_j,u^0_{j},x_{j-1},x_{j})
	-\bar{A}(u^0_{j-1},u^0_{j},x_{j-1},x_{j})}
\le L \abs{u^0_{j}-u^0_{j-1}}.
\end{split}
\end{equation}
Plugging \eqref{four_terms_1_new} into \eqref{four_terms_new}, and then summing over $j \in \Z$, the result is
\begin{equation*}
\begin{split}
\sumj \abs{u_j^{n+1} - u_j^{n}}
&\le 2 \lambda  \bar{R}\sumj \abs{a(x_{j+1}) - a(x_j) } \\
&+ 2 \lambda L\sumj \abs{u^0_{j+1} - u^0_j}
+ 2 \lambda L \sumj \abs{u_M(x_{j+1}) - u_M(x_j)}\\
&\le
2 \lambda (\bar{R} \TV(a) +L \TV(u_0) + L \TV(u_M)).
\end{split}
\end{equation*}
\end{proof}

\begin{lemma}\label{lemma_z_time_continuity}
The following time continuity estimate holds for $z_j^n$:
\begin{equation}\label{time_continuity_est_z}
\sumj \abs{z_j^{n+1} - z_j^n} \le 2 \lambda L (\bar{R} \TV(a) +L \TV(u_0) + L \TV(u_M)),  \quad n \ge 0.
\end{equation}
\end{lemma}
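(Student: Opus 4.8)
The plan is to transfer the time continuity estimate for $u_j^n$ established in Lemma~\ref{lemma_time_continuity} to the singular-mapping quantity $z_j^n = \Psi(u_j^n,x_j)$ by exploiting the Lipschitz continuity of $u \mapsto \Psi(u,x_j)$. The key observation is that $z_j^{n+1}$ and $z_j^n$ share the \emph{same} spatial argument $x_j$, so that
\[
z_j^{n+1}-z_j^n = \Psi(u_j^{n+1},x_j)-\Psi(u_j^n,x_j),
\]
and only the first slot of $\Psi$ changes between the two time levels. This is why the spatial increments of $\Psi$ (controlled in Lemma~\ref{lemma:Psi_lipschitz}) play no role here, and the estimate reduces to a one-variable Lipschitz bound.

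First I would record the pointwise Lipschitz bound in the first variable. From the integral definition \eqref{Psi} and the bound $\abs{\partial_u A(\theta,x)} \le L$ for $\abs{\theta}\le\mM$ recorded in \eqref{def_B_L1}, one has, for any $u,v\in[-\mM,\mM]$,
\[
\abs{\Psi(u,x_j)-\Psi(v,x_j)} = \abs{\int_v^u \abs{\frac{\partial A}{\partial u}(\theta,x_j)}\,d\theta} \le L\abs{u-v}.
\]
Next, since Lemma~\ref{lemma:A_u_bound} guarantees $\abs{u_j^{n+1}},\abs{u_j^n}\le\mM$ for all $j\in\Z$ and $n\ge 0$, this bound applies at each grid point, giving $\abs{z_j^{n+1}-z_j^n}\le L\abs{u_j^{n+1}-u_j^n}$.

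Finally I would sum over $j\in\Z$ and invoke the time continuity estimate \eqref{time_continuity_est_u} of Lemma~\ref{lemma_time_continuity}:
\[
\sumj\abs{z_j^{n+1}-z_j^n} \le L\sumj\abs{u_j^{n+1}-u_j^n} \le 2\lambda L\,(\bar{R}\,\TV(a)+L\,\TV(u_0)+L\,\TV(u_M)),
\]
which is exactly the claimed bound. I expect no genuine obstacle in this argument: the entire content is pulling the Lipschitz constant $L$ through the sum, and the two nontrivial ingredients—the uniform bound $\abs{u_j^n}\le\mM$ and the $u$-level time continuity estimate—are already in hand. The only point requiring any care is to confirm that both $u_j^{n+1}$ and $u_j^n$ lie in the range $[-\mM,\mM]$ on which the derivative bound $\abs{\partial_u A}\le L$ holds, which is precisely the role of Lemma~\ref{lemma:A_u_bound}.
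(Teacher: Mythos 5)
Your proposal is correct and follows essentially the same route as the paper: both bound $\abs{z_j^{n+1}-z_j^n}=\abs{\Psi(u_j^{n+1},x_j)-\Psi(u_j^n,x_j)}\le L\abs{u_j^{n+1}-u_j^n}$ via the integral definition \eqref{Psi} and the derivative bound in \eqref{def_B_L1}, then sum over $j$ and invoke Lemma~\ref{lemma_time_continuity}. Your explicit appeal to Lemma~\ref{lemma:A_u_bound} to justify that $u_j^n, u_j^{n+1}\in[-\mM,\mM]$ is a point the paper leaves implicit, but the argument is the same.
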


\begin{proof}
The estimate \eqref{time_continuity_est_z} follows directly from time continuity for $\{u_j^n \}$, and Lipschitz continuity of
$\Psi(\cdot,x_j)$. Indeed,
\begin{equation}\label{time_continuity_est_z1}
|z_j^{n+1}-z_j^n|=\abs{\Psi(u^{n+1}_j,x_j)-\Psi(u^n_j,x_j)}=\abs{\int\limits_{u^n_j}^{u^{n+1}_j}\abs{\frac{\pa A}{\pa u}(\theta,x_j)\,d\theta}}\leq L\abs{u^{n+1}_j-u^n_j}.
\end{equation}
Now \eqref{time_continuity_est_z} is immediate from \eqref{time_continuity_est_u} and \eqref{time_continuity_est_z1}.
\end{proof}

We next turn to establishing a spatial variation bound for $z_j^n$.
Define
\begin{equation}
\sigma_+(u,x) = 
\begin{cases}
1, \quad & A_u(u,x) >0,\\
0, \quad & A_u(u,x)\le 0,
\end{cases}
\quad 
\sigma_-(u,x) = 
\begin{cases}
1, \quad & A_u(u,x) <0,\\
0, \quad & A_u(u,x)\ge 0.
\end{cases}
\end{equation}
For the proof of the following lemma we refer the reader 
to Lemma 4.5 of 
\cite{AJG} or Lemma 3.3 of \cite{towers_disc_flux_1}.
\begin{lemma}\label{lemma_integral_ineq_1}
The following inequality holds:
\begin{equation}\label{integral_ineq_1}
\begin{split}
\left(\Psi(u_{j+1}^n,x_j) - \Psi(u_j^n,x_j) \right)_+ 
&\le \sigma_-(u_j^n,x_j) \abs{\bar{A}(u_j^n,u_{j+1}^n,x_j,x_j) - \bar{A}(u_{j-1}^n,u_{j}^n,x_j,x_j)}\\
&+ \sigma_+(u_{j+1}^n,x_{j}) \abs{\bar{A}(u_{j+1}^n,u_{j+2}^n,x_j,x_j) - \bar{A}(u_{j}^n,u_{j+1}^n,x_j,x_j)}.
\end{split}
\end{equation}
\end{lemma}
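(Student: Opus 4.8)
The plan is to exploit the fact that every argument appearing in \eqref{integral_ineq_1} is frozen at the single spatial location $x_j$. At a frozen location the numerical flux $\bar A(\cdot,\cdot,x_j,x_j)$ is precisely the classical Godunov flux for the unimodal flux $f(\cdot):=A(\cdot,x_j)$, so I would first rewrite everything in terms of the singular mapping and then reduce the claim to a purely algebraic inequality in four real numbers. Writing $m:=u_M(x_j)$ and using $\Psi(u,x_j)=sgn(u-m)A(u,x_j)$ from the remark after \eqref{map_alpha}, I would establish the two identities
\[
A(\max(u,m),x_j)=\bigl(\Psi(u,x_j)\bigr)_+,\qquad A(\min(v,m),x_j)=\bigl(\Psi(v,x_j)\bigr)_-,
\]
where $(\cdot)_+,(\cdot)_-$ denote positive and negative parts; each follows by splitting on $u\lessgtr m$ and using $A(m,x_j)=0$ together with the strict positivity supplied by \eqref{uniform_unimodal}. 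Consequently
\[
\bar A(u,v,x_j,x_j)=\max\bigl\{(\Psi(u,x_j))_+,\,(\Psi(v,x_j))_-\bigr\}.
\]

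Next I would record the meaning of the multipliers: since the sign of $A_u(\cdot,x_j)$ is governed by position relative to $m$, one has $\sigma_-(u_j^n,x_j)=1$ exactly when $\Psi(u_j^n,x_j)<0$ and $\sigma_+(u_{j+1}^n,x_j)=1$ exactly when $\Psi(u_{j+1}^n,x_j)>0$ (consistent with $u\mapsto\Psi(u,x_j)$ being strictly increasing, as already noted after \eqref{Psi}). Substituting these three facts and abbreviating $p:=\Psi(u_{j-1}^n,x_j)$, $q:=\Psi(u_j^n,x_j)$, $r:=\Psi(u_{j+1}^n,x_j)$, $s:=\Psi(u_{j+2}^n,x_j)$, the assertion collapses to the algebraic inequality
\[
(r-q)_+\le \mathbf 1_{\{q<0\}}\bigl|\max\{q_+,r_-\}-\max\{p_+,q_-\}\bigr|+\mathbf 1_{\{r>0\}}\bigl|\max\{r_+,s_-\}-\max\{q_+,r_-\}\bigr|
\]
to be proved for all real $p,q,r,s$.

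I would prove this by cases. If $r\le q$ the left side vanishes and there is nothing to show, so assume $r>q$ and split on the signs of $q$ and $r$: (A) $q\ge 0$, whence $r>0$; (B) $r\le 0$, whence $q<0$; and (C) $q<0<r$. The mechanism is that the indicator factors switch off exactly the flux difference that is not needed. In case (C) one computes $\max\{q_+,r_-\}=0$, so the first term contributes $\max\{p_+,-q\}\ge -q$ and the second contributes $\max\{r,s_-\}\ge r$, summing to at least $r-q$; cases (A) and (B) are single-term estimates of the same flavor, using that the surviving inner maximum dominates $r$ or $-q$ respectively.

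I expect the genuine work to lie entirely in the reduction step rather than the arithmetic: verifying the two flux identities and the characterization of $\sigma_\pm$ cleanly, and in particular handling the boundary value $u=m$ (where $q=0$ and both $\sigma_+,\sigma_-$ vanish, matching the indicators) and confirming that $A(\cdot,x_j)$ is strictly monotone on each open branch so that $\sigma_-(u_j^n,x_j)=\mathbf 1_{\{q<0\}}$ and $\sigma_+(u_{j+1}^n,x_j)=\mathbf 1_{\{r>0\}}$ hold with equality. Once the statement is expressed in the $\Psi$-variables the role of the $\sigma_\pm$ factors—selecting the neighboring flux difference appropriate to the branch on which $u_j^n$ or $u_{j+1}^n$ lies—becomes transparent, and the case analysis above is the content underlying the cited Lemma~4.5 of \cite{AJG}.
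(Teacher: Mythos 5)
Your proposal is correct in substance, and it necessarily takes a different route from the paper, because the paper does not actually prove this lemma: it defers to Lemma 4.5 of \cite{AJG} and Lemma 3.3 of \cite{towers_disc_flux_1}, whose arguments are direct case analyses on the positions of $u_{j-1}^n,\dots,u_{j+2}^n$ relative to the critical point. Your reduction is cleaner and self-contained. The identities $A(\max(u,m),x_j)=(\Psi(u,x_j))_+$ and $A(\min(v,m),x_j)=(\Psi(v,x_j))_-$ (negative part taken nonnegative) are exactly right, since $A\ge 0$, $A(m,x_j)=0$ and $\Psi(u,x_j)=sgn(u-m)A(u,x_j)$; hence $\bar{A}(u,v,x_j,x_j)=\max\{(\Psi(u,x_j))_+,(\Psi(v,x_j))_-\}$, and your four-variable inequality in $p,q,r,s$ does hold: I checked cases (A)--(C), e.g.\ in (C) one has $\max\{q_+,r_-\}=0$, so the two terms dominate $-q$ and $r$ respectively, and their sum dominates $r-q$. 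What this buys is that all the structure of the Godunov flux is encoded in one formula and the rest is bookkeeping of positive and negative parts, rather than an enumeration of configurations of the $u$-values.

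The one step that is not watertight is the identification $\sigma_-(u_j^n,x_j)=\mathbf{1}_{\{q<0\}}$ and $\sigma_+(u_{j+1}^n,x_j)=\mathbf{1}_{\{r>0\}}$, which you propose to justify by strict monotonicity of the two branches. Strict monotonicity is not enough: a strictly increasing branch can have $A_u=0$ at an interior point. Concretely, take $u_M\equiv 0$ and
\begin{equation*}
A(u,x)=u^2 \ \text{ for } u\le 0, \qquad A(u,x)=(u-1)^3+1 \ \text{ for } u\ge 0,
\end{equation*}
which satisfies (H-1)--(H-4). Unimodality and \eqref{uniform_unimodal} do give the one-sided bounds $\sigma_-\le\mathbf{1}_{\{q<0\}}$ and $\sigma_+\le\mathbf{1}_{\{r>0\}}$ unconditionally, but these go the wrong way: your algebraic inequality is then strictly weaker than \eqref{integral_ineq_1}. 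Indeed, with this flux and $u_j^n=1/2$, $u_{j+1}^n=1$, the left side of \eqref{integral_ineq_1} equals $1/8$ while $\sigma_-(u_j^n,x_j)=\sigma_+(u_{j+1}^n,x_j)=0$, so the lemma itself fails; it is only true under the additional nondegeneracy hypothesis $A_u(u,x)\neq 0$ for $u\neq u_M(x)$, which is assumed in \cite{AJG} and \cite{towers_disc_flux_1} and is implicitly intended here. So your proof is complete precisely under the hypothesis that makes the statement true; the fix is to invoke that hypothesis explicitly (under it, your indicator characterization of $\sigma_\pm$ is immediate), rather than to derive it from strict monotonicity, which does not imply it.
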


\begin{lemma}\label{lemma_integral_ineq_2}
Let $\bar{A}^n_{\jph} = \bar{A}(u_j^n,u_{j+1}^n,x_j,x_{j+1})$.  The following inequality holds:
\begin{equation}\label{integral_ineq_2}
\begin{split}
\left(\Psi(u_{j+1}^n,x_{j+1}) - \Psi(u_j^n,x_j) \right)_+ 
&\le \sigma_-(u_j^n,x_j) \abs{\bar{A}^n_{\jph} - \bar{A}^n_{\jmh}}\\
&+ \sigma_+(u_{j+1}^n,x_{j}) \abs{\bar{A}^n_{j+3/2} - \bar{A}^n_{\jph}}\\
&+ \Omega_{\jph}^n,
\end{split}
\end{equation}
where $\sum\nolimits_{j\in \Z} \Omega_{\jph}^n \le C (\TV(a)+\TV(u_M))$ and $C$ is independent of $\D$.
\end{lemma}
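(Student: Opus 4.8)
The plan is to bootstrap from the frozen-coefficient inequality of Lemma~\ref{lemma_integral_ineq_1}, in which every numerical flux carries both spatial arguments equal and $\Psi$ is evaluated at $x_j$, up to the true inequality claimed here, in which the fluxes carry their genuine grid arguments ($\bar{A}^n_{\jph}=\bar{A}(u_j^n,u_{j+1}^n,x_j,x_{j+1})$, etc.) and the two $\Psi$-values sit at $x_j$ and $x_{j+1}$. Each replacement of a frozen flux by its true counterpart, and the shift of the second $\Psi$-argument from $x_j$ to $x_{j+1}$, generates an error which I collect into $\Omega^n_{\jph}$; the whole point is then to show that these errors are controlled by $\TV(a)$ and $\TV(u_M)$ uniformly in $\D$.

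First I would treat the left-hand side. Writing
\[
\Psi(u_{j+1}^n,x_{j+1}) - \Psi(u_j^n,x_j) = \gh{\Psi(u_{j+1}^n,x_j)-\Psi(u_j^n,x_j)} + \gh{\Psi(u_{j+1}^n,x_{j+1})-\Psi(u_{j+1}^n,x_j)},
\]
and using the elementary bound $(p+q)_+\le (p)_+ + \abs{q}$, the positive part is bounded by $\gh{\Psi(u_{j+1}^n,x_j)-\Psi(u_j^n,x_j)}_+ + \abs{\Psi(u_{j+1}^n,x_{j+1})-\Psi(u_{j+1}^n,x_j)}$. Lemma~\ref{lemma:Psi_lipschitz} bounds the second summand by $L\abs{u_M(x_{j+1})-u_M(x_j)} + \bar{R}\,\abs{a(x_{j+1})-a(x_j)}$, where I have used $R(u_{j+1}^n)\le \bar{R}$ (valid since $\abs{u_{j+1}^n}\le\mM$ by Lemma~\ref{lemma:A_u_bound}); this is the first contribution to $\Omega^n_{\jph}$, and it sums over $j$ to at most $L\,\TV(u_M)+\bar{R}\,\TV(a)$.

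To the surviving term $\gh{\Psi(u_{j+1}^n,x_j)-\Psi(u_j^n,x_j)}_+$ I would apply Lemma~\ref{lemma_integral_ineq_1}, and then upgrade each frozen flux on its right-hand side to the true flux via the triangle inequality, for instance
\[
\abs{\bar{A}(u_j^n,u_{j+1}^n,x_j,x_j)-\bar{A}(u_{j-1}^n,u_j^n,x_j,x_j)} \le \abs{\bar{A}^n_{\jph}-\bar{A}^n_{\jmh}} + \mathcal{E}_{\jph} + \mathcal{E}_{\jmh},
\]
where $\mathcal{E}_{\jph}=\abs{\bar{A}(u_j^n,u_{j+1}^n,x_j,x_j)-\bar{A}^n_{\jph}}$ and $\mathcal{E}_{\jmh}=\abs{\bar{A}(u_{j-1}^n,u_j^n,x_j,x_j)-\bar{A}^n_{\jmh}}$, with the analogous step for the $\sigma_+$ term, where the true flux $\bar{A}^n_{j+3/2}=\bar{A}(u_{j+1}^n,u_{j+2}^n,x_{j+1},x_{j+2})$ appears. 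Each remainder $\mathcal{E}$ is a flux difference in which only a spatial argument has moved, so the third and fourth continuity estimates of Lemma~\ref{lemma_flux_estimates} bound it by $\bar{R}\abs{a(\cdot)-a(\cdot)} + L\abs{u_M(\cdot)-u_M(\cdot)}$ with arguments at neighbouring grid points---except for $\bar{A}^n_{j+3/2}$, whose second argument moves from $x_j$ to $x_{j+2}$, which I split as $\abs{a(x_{j+2})-a(x_j)}\le \abs{a(x_{j+2})-a(x_{j+1})}+\abs{a(x_{j+1})-a(x_j)}$ (and likewise for $u_M$). Since $\sigma_\pm\le 1$, I drop the indicators when estimating and absorb every $\mathcal{E}$, together with the $\Psi$-remainder above, into $\Omega^n_{\jph}$.

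Summing over $j\in\Z$ then finishes the proof: each remainder is a fixed bounded multiple (depending only on $L$ and $\bar{R}$) of a one- or two-step increment of $a$ or of $u_M$, and only finitely many such increments---a number independent of $\D$---occur per index $j$, so each sum collapses into a bounded multiple of $\TV(a)$ or $\TV(u_M)$ and yields $\sum_{j\in\Z}\Omega^n_{\jph}\le C(\TV(a)+\TV(u_M))$ with $C$ independent of $\D$. I expect the main obstacle to be purely the bookkeeping: tracking which spatial slot moves in each flux difference so as to invoke the correct estimate from Lemma~\ref{lemma_flux_estimates}, and checking that the two-step shift hidden in $\bar{A}^n_{j+3/2}$ still sums to a $\D$-independent multiple of the total variations rather than accumulating an unbounded constant.
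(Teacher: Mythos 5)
Your proposal is correct and follows essentially the same route as the paper's proof: split off the $x_j\to x_{j+1}$ shift in $\Psi$ via Lemma~\ref{lemma:Psi_lipschitz}, apply the frozen-coefficient Lemma~\ref{lemma_integral_ineq_1}, restore the true fluxes by the triangle inequality, and bound each remainder by the spatial continuity estimates of Lemma~\ref{lemma_flux_estimates} (including the same splitting of the two-step increment from $x_j$ to $x_{j+2}$), so that summing over $j$ gives $\sum_j \Omega^n_{\jph}\le C(\TV(a)+\TV(u_M))$.
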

\begin{proof}
	By Lemma \ref{lemma:Psi_lipschitz} we have 
	\begin{eqnarray}
	\left(\Psi(u_{j+1}^n,x_{j+1}) - \Psi(u_j^n,x_j) \right)_+ &\leq& \left(\Psi(u_{j+1}^n,x_{j}) - \Psi(u_j^n,x_j) \right)_+\nonumber\\
	& +&\left(\Psi(u_{j+1}^n,x_{j+1}) - \Psi(u_{j+1}^n,x_j) \right)_+ \nonumber\\
	&\leq& \left(\Psi(u_{j+1}^n,x_{j}) - \Psi(u_j^n,x_j) \right)_++L\abs{u_M(x_{j+1})-u_M(x_j)}\nonumber\\
	&+& R(u_{j+1}^n)\abs{a(x_{j+1})-a(x_j)}.
	\end{eqnarray}
	From \eqref{integral_ineq_1} we have
	\begin{eqnarray}
	\left(\Psi(u_{j+1}^n,x_{j+1}) - \Psi(u_j^n,x_j) \right)_+ 
	&\le& \sigma_-(u_j^n,x_j) \abs{\bar{A}(u_j^n,u_{j+1}^n,x_j,x_j) - \bar{A}(u_{j-1}^n,u_{j}^n,x_j,x_j)}\nonumber\\
	&+& \sigma_+(u_{j+1}^n,x_{j}) \abs{\bar{A}(u_{j+1}^n,u_{j+2}^n,x_j,x_j) - \bar{A}(u_{j}^n,u_{j+1}^n,x_j,x_j)}\nonumber\\
		& +&\bar{R}\abs{a(x_{j+1})-a(x_j)}+L\abs{u_M(x_{j+1})-u_M(x_j)}.\label{estimate_on_psi1}
	\end{eqnarray}
	We further modify \eqref{estimate_on_psi1} to get the following
	\begin{equation}\label{estimate_on_psi2}
	\begin{array}{lll}
	& &\left(\Psi(u_{j+1}^n,x_{j+1}) - \Psi(u_j^n,x_j) \right)_+ \\
	&\le& \sigma_-(u_j^n,x_j) \abs{\bar{A}^n_{\jph} - \bar{A}^n_{\jmh}}+ \sigma_+(u_{j+1}^n,x_{j+1}) \abs{\bar{A}^n_{j+3/2} - \bar{A}^n_{\jph}}\\
	& +&\bar{R}\abs{a(x_{j+1})-a(x_j)}+L\abs{u_M(x_{j+1})-u_M(x_j)}\\
	&+& \sigma_-(u_j^n,x_j) \abs{\bar{A}(u_{j}^n,u_{j+1}^n,x_{j},x_{j+1}) - \bar{A}(u_{j}^n,u_{j+1}^n,x_j,x_j)}\\
	&+& \sigma_-(u_j^n,x_j) \abs{\bar{A}(u_{j-1}^n,u_{j}^n,x_{j-1},x_j) - \bar{A}(u_{j-1}^n,u_{j}^n,x_j,x_j)}\\
	&+& \sigma_+(u_{j+1}^n,x_{j}) \abs{\bar{A}(u_{j+1}^n,u_{j+2}^n,x_j,x_j) - \bar{A}(u_{j+1}^n,u_{j+2}^n,x_{j},x_{j+2})}\\
	&+& \sigma_+(u_{j+1}^n,x_{j}) \abs{\bar{A}(u_{j+1}^n,u_{j+2}^n,x_j,x_{j+2}) - \bar{A}(u_{j+1}^n,u_{j+2}^n,x_{j+1},x_{j+2})}\\
	&+& \sigma_+(u_{j+1}^n,x_{j}) \abs{\bar{A}(u_{j}^n,u_{j+1}^n,x_j,x_{j+1}) - \bar{A}(u_{j}^n,u_{j+1}^n,x_j,x_{j})}.
	\end{array}
	\end{equation}
	Next we apply Lemma \ref{lemma_flux_estimates} to bound the last five terms  of \eqref{estimate_on_psi2} by
	\begin{equation}\label{estimate_on_psi3}
	\begin{array}{lll}
	\bar{R}\abs{a(x_j)-a(x_{j-1})}+3\bar{R}\abs{a(x_{j+1})-a(x_{j})}+\bar{R}\abs{a(x_{j})-a(x_{j+2})}\\
	+L\abs{u_M(x_j)-u_M(x_{j-1})}+3L\abs{u_M(x_j)-u_M(x_{j+1})}+L\abs{u_M(x_{j})-u_M(x_{j+2})}.
	\end{array}
	\end{equation}
	Combining \eqref{estimate_on_psi2} and \eqref{estimate_on_psi3} we get \eqref{integral_ineq_2} with 
	\begin{eqnarray*}
	\Omega_{\jph}&:=&\bar{R}\abs{a(x_j)-a(x_{j-1})}+4\bar{R}\abs{a(x_{j+1})-a(x_{j})}+\bar{R}\abs{a(x_{j+2})-a(x_{j})}\\
	&+&L\abs{u_M(x_j)-u_M(x_{j-1})}+4L\abs{u_M(x_{j+1})-u_M(x_{j})}+L\abs{u_M(x_{j+2})-u_M(x_{j})}.
	\end{eqnarray*}
\end{proof}

\begin{lemma}\label{lemma_Psi_limits}
There are real numbers $P^{\pm}$ such that
\begin{equation}\label{P+-}
\lim_{x \rightarrow \pm \infty} \Psi(0,x) = P^{\pm}.
\end{equation}
\end{lemma}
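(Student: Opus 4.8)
The plan is to deduce the existence of the two limits directly from the Lipschitz-type modulus of continuity for the map $x \mapsto \Psi(0,x)$ established in Lemma~\ref{lemma:Psi_lipschitz}, combined with the fact that the two driving functions $a$ and $u_M$ both belong to $\BV(\R)$ and therefore possess finite one-sided limits at $\pm\infty$.

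First I would note that $0 \in [-\mM,\mM]$ (since $\mM \ge 0$ by its definition as a supremum of absolute values), so Lemma~\ref{lemma:Psi_lipschitz} applies with $u = 0$ and gives, for all $x,y \in \re$,
\begin{equation*}
\abs{\Psi(0,x) - \Psi(0,y)} \le L\abs{u_M(x) - u_M(y)} + R(0)\abs{a(x) - a(y)}.
\end{equation*}
This reduces the claim to controlling the oscillation of $u_M$ and $a$ near $\pm\infty$.

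Next I would invoke the standard fact that a function of bounded variation on all of $\R$ admits finite limits at $+\infty$ and at $-\infty$: writing such a function as the difference of two bounded monotone functions, each summand converges at $\pm\infty$. By Assumption~\descref{H2}{H-2} we have $a \in \BV(\R)$ and by Assumption~\descref{H4}{H-4} we have $u_M \in \BV(\R)$; consequently $\lim_{x\to\pm\infty} u_M(x)$ and $\lim_{x\to\pm\infty} a(x)$ exist and are finite, so both $u_M$ and $a$ satisfy the Cauchy criterion as $x \to +\infty$ and as $x \to -\infty$.

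Finally I would close the argument via the Cauchy criterion for $\Psi(0,\cdot)$: given $\eps > 0$, choose $M$ so large that $\abs{u_M(x) - u_M(y)}$ and $\abs{a(x) - a(y)}$ are each smaller than $\eps/(2(L + R(0) + 1))$ whenever $x,y > M$; the displayed inequality then forces $\abs{\Psi(0,x) - \Psi(0,y)} < \eps$, so $\Psi(0,\cdot)$ is Cauchy at $+\infty$ and the limit $P^+$ exists. The identical estimate near $-\infty$ yields $P^-$. No step is a genuine obstacle here; the only point deserving care is the justification that $\BV(\R)$ functions converge at infinity — a property that fails for functions of merely \emph{local} bounded variation — and this is precisely where the global bounds $\TV(a), \TV(u_M) < \infty$ furnished by Assumptions~\descref{H2}{H-2} and \descref{H4}{H-4} are essential.
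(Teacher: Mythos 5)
Your proof is correct and follows essentially the same route as the paper: both apply Lemma~\ref{lemma:Psi_lipschitz} at $u=0$ and exploit $a, u_M \in \BV(\R)$ via the standard fact that $\BV(\R)$ functions have finite limits at $\pm\infty$. The only cosmetic difference is that the paper concludes $x \mapsto \Psi(0,x)$ is itself $\BV$ and then cites the limit property once, whereas you apply that property to $a$ and $u_M$ first and transfer convergence through the estimate by the Cauchy criterion.
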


\begin{proof}
	By Lemma \ref{lemma:Psi_lipschitz} we have
	\begin{equation}\label{Psi_0x_BV}
	\abs{\Psi(0,x)-\Psi(0,y)}\leq \bar{R}\abs{a(x)-a(y)}+L\abs{u_M(x)-u_M(y)}.
	\end{equation}
	Along with Assumptions \descref{H2}{H-2} and \descref{H4}{H-4} the estimate \eqref{Psi_0x_BV} yields $x\mapsto\Psi(0,x)$ is a $BV$ function on $\re$. Then \eqref{P+-} follows by a standard property of $BV$ functions.
\end{proof}

\begin{lemma}\label{lemma_tv_bound_z}
The following spatial variation bound holds for $n \ge 0$:
\begin{equation}\label{tv_bound_z}
\sumj \abs{z^n_{j+1}-z^n_j} \le C,
\end{equation}
where $C$ is a $\D$-independent constant.
\end{lemma}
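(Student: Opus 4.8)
The plan is to exploit the one-sided estimate of Lemma \ref{lemma_integral_ineq_2}, which already controls the \emph{positive parts} of the spatial differences of $z^n$, and then to recover the full variation by a telescoping argument. Write $b_j := z^n_{j+1} - z^n_j = \Psi(u_{j+1}^n,x_{j+1}) - \Psi(u_j^n,x_j)$ and recall the elementary identity $\abs{b_j} = 2(b_j)_+ - b_j$. Summing over $j$, the total variation splits as $\sumj \abs{b_j} = 2\sumj (b_j)_+ - \sumj b_j$, so it suffices to bound $\sumj (b_j)_+$ by a $\D$-independent constant and to evaluate the telescoping sum $\sumj b_j$.

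For the positive parts, I would apply Lemma \ref{lemma_integral_ineq_2} termwise and sum over $j$. The term $\sumj \Omega_{\jph}^n$ is already bounded by $C(\TV(a) + \TV(u_M))$. It remains to bound the two flux-variation sums; since $\sigma_\pm \le 1$, both are dominated by the total variation of the numerical flux, namely $\sumj \abs{\bar{A}^n_{\jph} - \bar{A}^n_{\jmh}}$ (the second sum after the index shift $k=j+1$). The key input here is the scheme \eqref{scheme_A} itself, which gives $\bar{A}^n_{\jph} - \bar{A}^n_{\jmh} = -\lambda^{-1}(u_j^{n+1} - u_j^n)$, whence $\sumj \abs{\bar{A}^n_{\jph} - \bar{A}^n_{\jmh}} = \lambda^{-1}\sumj\abs{u_j^{n+1}-u_j^n}$, which is bounded by $2(\bar{R}\TV(a)+L\TV(u_0)+L\TV(u_M))$ by the time-continuity estimate \eqref{time_continuity_est_u} of Lemma \ref{lemma_time_continuity}. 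Combining these yields $\sumj (b_j)_+ \le C_1$ with $C_1$ independent of $\D$.

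For the telescoping sum, I would use that $u_0$ has compact support, so that $u_j^n = 0$ for all sufficiently large $\abs{j}$ and hence $z_j^n = \Psi(0,x_j)$ there. Therefore $\sum\nolimits_{j=-M}^{M-1} b_j = z_M^n - z_{-M}^n = \Psi(0,x_M) - \Psi(0,x_{-M})$, and letting $M \to \infty$ Lemma \ref{lemma_Psi_limits} gives $\sumj b_j = P^+ - P^-$, a fixed number not depending on $\D$. (Since $\sumj (b_j)_+ \le C_1 < \infty$ and the partial sums of $b_j$ converge, the splitting $\abs{b_j} = 2(b_j)_+ - b_j$ may be summed legitimately.) Substituting into $\sumj\abs{b_j} = 2\sumj (b_j)_+ - \sumj b_j$ produces $\sumj\abs{z^n_{j+1}-z^n_j} \le 2C_1 + \abs{P^+ - P^-} =: C$, the desired $\D$-independent bound.

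The main obstacle is the conversion from the one-sided positive-part estimate to a two-sided variation bound: this is the heart of the singular-mapping method, and it succeeds only because the telescoping sum can be evaluated to a finite, mesh-independent constant. That evaluation relies essentially on two facts established earlier, namely the compact support of the numerical solution and the existence of the limits $P^{\pm}$ of $\Psi(0,\cdot)$ from Lemma \ref{lemma_Psi_limits}, together with the flux-variation bound extracted from the scheme and the time-continuity estimate.
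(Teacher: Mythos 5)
Your proposal is correct and follows essentially the same route as the paper: bound $\sumj (z^n_{j+1}-z^n_j)_+$ via Lemma \ref{lemma_integral_ineq_2}, convert the flux-variation sums into $\lambda^{-1}\sumj\abs{u_j^{n+1}-u_j^n}$ through the scheme \eqref{scheme_A} and Lemma \ref{lemma_time_continuity}, evaluate the telescoping sum as $P^+-P^-$ using compact support and Lemma \ref{lemma_Psi_limits}, and recover the full variation algebraically. The only cosmetic difference is that you use the identity $\abs{b} = 2b_+ - b$ where the paper writes $\abs{b} = b_+ - b_-$ together with $b = b_+ + b_-$, which is the same computation.
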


\begin{proof}
From Lemma~\ref{lemma_integral_ineq_2} we find that
\begin{equation}
\begin{split}
\sumj (z^n_{j+1}-z^n_j)_+ 
&\le \sumj \sigma_-(u_j^n,x_j) \abs{\bar{A}^n_{\jph} - \bar{A}^n_{\jmh}}
+ \sumj  \sigma_+(u_{j+1}^n,x_{j}) \abs{\bar{A}^n_{j+3/2} - \bar{A}^n_{\jph}}\\
&+ \sumj \Omega_{\jph}^n\\
&\le 2 \sumj \abs{\bar{A}^n_{\jph} - \bar{A}^n_{\jmh}} + \sumj \Omega_{\jph}^n\\
&= {2 \over \lambda} \sumj \abs{u_j^{n+1}-u_j^n} + \sumj \Omega_{\jph}^n.
\end{split}
\end{equation}
Invoking Lemma~\ref{lemma_time_continuity} and the fact that
$\sum\nolimits_{j \in \Z} \Omega_{\jph}^n \le C (\TV(a)+\TV(u_M))$, the result is
\begin{equation}\label{sum_dz_1}
\sumj (z^n_{j+1}-z^n_j)_+  \le C_2,
\end{equation}
for some $\D$-independent constant $C_2$.

As a result of Lemma~\ref{lemma_Psi_limits}, along with the fact that $\{ u_j^n\}$ has compact
support,
\begin{equation}\label{sum_dz_2}
\sumj (z^n_{j+1}-z^n_j) =P^+ - P^-.  
\end{equation}
We also have
\begin{equation}
\sumj (z^n_{j+1}-z^n_j) =\sumj (z^n_{j+1}-z^n_j)_+ + \sumj (z^n_{j+1}-z^n_j)_-,
\end{equation}
implying that
\begin{equation}\label{sum_dz_3}
-\sumj (z^n_{j+1}-z^n_j)_- = \sumj (z^n_{j+1}-z^n_j)_+ + P^- - P^+.
\end{equation}
From \eqref{sum_dz_1} and \eqref{sum_dz_3} it follows that
\begin{equation}
\sumj \abs{z^n_{j+1}-z^n_j} =  \sumj (z^n_{j+1}-z^n_j)_+ - \sumj (z^n_{j+1}-z^n_j)_-
\le 2 C_2 + P^- - P^+.
\end{equation}
\end{proof}

\begin{lemma}\label{lemma_convergence}
The approximations $u^{\D}$ converge as $\D \rightarrow 0$, modulo extraction of a subsequence, in $L^1_{\loc}(\Pi_T)$ and pointwise
a.e. in $\Pi_T$ to a function $u \in L^{\infty}(\Pi_T) \cap  C([0,T]:L^1_{\loc}(\R))$.
\end{lemma}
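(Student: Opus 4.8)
The plan is to use the singular-mapping strategy: I first extract a convergent subsequence of the transformed grid functions $\{z^\Delta\}$, for which all the needed uniform estimates are already in hand, and then recover convergence of $\{u^\Delta\}$ by inverting $u \mapsto \Psi(u,x)$. The point of working with $z^\Delta$ rather than $u^\Delta$ directly is that no uniform spatial variation bound for $u^\Delta$ is available, whereas one is available for $z^\Delta$.

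First I would assemble the three ingredients that force compactness of $\{z^\Delta\}$. Lemma~\ref{lemma_z_bounded} gives the uniform bound $\abs{z^\Delta} \le 2\mM L$; Lemma~\ref{lemma_tv_bound_z} gives a $\D$-independent spatial variation bound $\sumj \abs{z^n_{j+1}-z^n_j} \le C$; and Lemma~\ref{lemma_z_time_continuity} gives the time-continuity estimate $\sumj \abs{z^{n+1}_j - z^n_j} \le 2\lambda L(\bar R\,\TV(a)+L\,\TV(u_0)+L\,\TV(u_M))$. Since $\lambda = \D t/\D x$ is held fixed, multiplying by $\D x$ converts these discrete sums into $\D$-independent control of the $L^1$ moduli of continuity of $z^\Delta$ under spatial and temporal translations. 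By the standard compactness criterion for such grid functions (the $\BV$/Kolmogorov compactness theorem, e.g.\ \cite{Holden_Risebro}), the family $\{z^\Delta\}$ is relatively compact in $L^1_{\loc}(\Pi_T)$; hence, modulo a subsequence, $z^\Delta \to z$ in $L^1_{\loc}(\Pi_T)$ and pointwise a.e.\ in $\Pi_T$, for some $z \in L^\infty(\Pi_T)$.

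Next I would transfer this convergence to $\{u^\Delta\}$. By \eqref{map_alpha} one has $u^\Delta(x,t) = \alpha(z^\Delta(x,t),x_j)$ on each cell $I_j \times I^n$, where the grid point satisfies $\abs{x_j - x} \le \D x/2 \to 0$. I set $u(x,t) := \alpha(z(x,t),x)$ and claim $u^\Delta \to u$ a.e. Fix a point $(x,t)$ at which $z^\Delta(x,t) \to z(x,t)$ and at which both $a$ and $u_M$ are continuous at $x$; since $a,u_M \in \BV(\R)$ by Assumptions \descref{H2}{H-2} and \descref{H4}{H-4}, these conditions fail only on a set of $x$ of measure zero. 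At such an $x$, Lemma~\ref{lemma:Psi_lipschitz} together with the Lipschitz continuity of $A$ in $u$ from \descref{H1}{H-1} shows that $\Psi$ is jointly continuous at $(u(x,t),x)$, while $u \mapsto \Psi(u,x)$ is strictly increasing by \descref{H3}{H-3}; consequently its inverse $\alpha$ is jointly continuous at $(z(x,t),x)$. Therefore $u^\Delta(x,t) = \alpha(z^\Delta(x,t),x_j) \to \alpha(z(x,t),x) = u(x,t)$, giving pointwise a.e.\ convergence. Because $\abs{u^\Delta} \le \mM$ by Lemma~\ref{lemma:A_u_bound}, the dominated convergence theorem upgrades this to convergence in $L^1_{\loc}(\Pi_T)$.

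It remains to record the regularity of $u$. The uniform bound $\abs{u^\Delta}\le \mM$ passes to the limit, so $u \in L^\infty(\Pi_T)$. For the time regularity, multiplying the estimate of Lemma~\ref{lemma_time_continuity} by $\D x$ shows that $t \mapsto u^\Delta(\cdot,t)$ is Lipschitz from the discrete time levels into $L^1(\R)$ with a $\D$-independent constant; passing to the limit (the uniform time modulus survives the $L^1_{\loc}$ convergence) yields a representative with $u \in C([0,T]:L^1_{\loc}(\R))$. The step I expect to be the main obstacle is precisely the inverse-mapping transfer from $z$ to $u$: the map $\alpha(\cdot,x)$ is genuinely discontinuous in $x$ at the flux discontinuities, so joint continuity of $\alpha$ cannot hold everywhere. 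The resolution is that the offending $x$ form a measure-zero set because $a$ and $u_M$ are $\BV$, which is exactly enough to obtain the pointwise a.e.\ limit of $u^\Delta$ and hence the $L^1_{\loc}$ conclusion.
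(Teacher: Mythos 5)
Your proposal is correct and follows essentially the same route as the paper's own proof: compactness of $z^{\Delta}$ from Lemmas~\ref{lemma_z_bounded}, \ref{lemma_z_time_continuity} and \ref{lemma_tv_bound_z}, inversion of the singular mapping to recover $u^{\Delta}$, dominated convergence, and the time-continuity estimate for the $C([0,T]:L^1_{\loc}(\R))$ regularity. In fact, your handling of the inversion step is more careful than the paper's: the paper asserts the identity $u^{\Delta}(x,t)=\Psi^{-1}(z^{\Delta}(x,t),x)$, which strictly speaking holds only with the cell center $x_j$ in place of $x$, and your argument via joint continuity of $\alpha(\cdot,\cdot)$ at points where $a$ and $u_M$ are continuous (which is a.e.\ $x$, since both are BV) is precisely what is needed to bridge that discrepancy.
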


\begin{proof}
From Lemma \ref{lemma_z_bounded}, \ref{lemma_z_time_continuity} and \ref{lemma_tv_bound_z} we have for some subsequence, and some 
$z \in L^1(\Pi_T) \cap L^{\infty}(\Pi_T)$, 
$z^{\D} \rightarrow z$ in $L^1(\Pi_T)$ 
and pointwise a.e. 
Define $u(x,t) = \Psi^{-1}(z(x,t),x)$.
We have
$u_j^n = \Psi^{-1}(z_j^n,x_j)$, or 
\begin{equation}
\textrm{$u^{\D}(x,t) = \Psi^{-1}(z^{\D}(x,t),x)$ for $(x,t) \in  \Pi_T$}.
\end{equation}
Fixing a point $(x,t) \in \Pi_T$ where $z^{\D}(x,t) \rightarrow z(x,t)$, and using the continuity
of $\zeta \mapsto \Psi^{-1}(\zeta,x)$ for each fixed $x \in \R$, we get 
\begin{equation}\label{convergence_u_De}
u^{\D}(x,t) \rightarrow u(x,t).
\end{equation}
Thus $u^{\D} \rightarrow u$ pointwise a.e. in $\Pi_T$. Since $u^{\D}$ is bounded in $\Pi_T$ independently of
$\D$ in $\Pi_T$, we also have $u^{\D} \rightarrow u$ in $L^1_{\loc}(\Pi_T)$, by the dominated convergence theorem. 
In fact, due to the time continuity estimate \eqref{time_continuity_est_u}, 
we also have $u \in C([0,T]:L^1_{\loc}(\R))$.
\end{proof}

\section{Entropy inequality and proof of Theorem~\ref{theorem1}}\label{entropy_section_A}
In this section we show that $u$ satisfies adapted entropy inequality \eqref{AEC}, the remaining ingredient required for the
proof of Theorem \ref{theorem1} .

\begin{lemma}\lab{entropy_lemma1_A}
	We have the following discrete entropy inequalities:
	\begin{equation}\label{ent_discrete_A}
	\abs{u^{n+1}_j- k^{\pm}_{\alpha,j}} 
	\le \abs{u_j^n - k^{\pm}_{\alpha,j}}
	- \lambda (\mF^n_{\jph} - \mF^n_{\jmh}),\mbox{ for all }j\in\Z
	\end{equation}
	where
	\begin{equation*}
	\mathcal{F}^n_{\jph} = \bar{A}(u_j^n \vee k^{\pm}_{\alpha,j},u_{j+1}^n \vee k^{\pm}_{\alpha,j+1},x_j,x_{j+1})	
	                                 -  \bar{A}(u_j^n \wedge k^{\pm}_{\alpha,j},u_{j+1}^n \wedge k^{\pm}_{\alpha,j+1},x_h,x_{j+1}).
	\end{equation*}
\end{lemma}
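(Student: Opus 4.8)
The plan is to run the standard Crandall--Majda argument for monotone schemes, but with the constant Kru\v{z}kov levels replaced by the stationary grid functions $\{k^{\pm}_{\alpha,j}\}_{j\in\Z}$, where $k^{\pm}_{\alpha,j}:=k^{\pm}_{\alpha}(x_j)$. Writing the scheme as $u_j^{n+1}=H_j(u_{j-1}^n,u_j^n,u_{j+1}^n)$ with $H_j$ as in \eqref{Hj}, I would first record the two facts that drive everything. First, by Lemma~\ref{lemma_A_scheme_monotone} (under the CFL condition \eqref{CFL}) the map $H_j$ is nondecreasing in each of its three arguments, provided those arguments lie in $[-\mM,\mM]$. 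Second, exactly as in Lemma~\ref{lemma:k_stationary_A}---whose computation uses only $k^{+}_{\alpha}(x_j)\ge u_M(x_j)$, $k^{-}_{\alpha}(x_j)\le u_M(x_j)$, and $A(u_M(x),x)=0\le\alpha$, and hence goes through verbatim for every $\alpha\ge 0$---the level functions are stationary: $H_j(k^{\pm}_{\alpha,j-1},k^{\pm}_{\alpha,j},k^{\pm}_{\alpha,j+1})=k^{\pm}_{\alpha,j}$ for all $j\in\Z$.

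The core step is the lattice/monotonicity trick. Denote by $\vec u$ and $\vec k$ the triples $(u_{j-1}^n,u_j^n,u_{j+1}^n)$ and $(k^{\pm}_{\alpha,j-1},k^{\pm}_{\alpha,j},k^{\pm}_{\alpha,j+1})$, and let $\vee,\wedge$ act componentwise. Since $\vec u\vee\vec k\ge\vec u$ and $\vec u\vee\vec k\ge\vec k$ componentwise, monotonicity of $H_j$ together with stationarity gives $H_j(\vec u\vee\vec k)\ge H_j(\vec u)=u_j^{n+1}$ and $H_j(\vec u\vee\vec k)\ge H_j(\vec k)=k^{\pm}_{\alpha,j}$, hence $H_j(\vec u\vee\vec k)\ge u_j^{n+1}\vee k^{\pm}_{\alpha,j}$; symmetrically $H_j(\vec u\wedge\vec k)\le u_j^{n+1}\wedge k^{\pm}_{\alpha,j}$. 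Subtracting and invoking the identity $|a-b|=a\vee b-a\wedge b$ yields $|u_j^{n+1}-k^{\pm}_{\alpha,j}|\le H_j(\vec u\vee\vec k)-H_j(\vec u\wedge\vec k)$.

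It then remains to expand the right-hand side using the explicit form \eqref{Hj}. The zeroth-order terms $(u_j^n\vee k^{\pm}_{\alpha,j})-(u_j^n\wedge k^{\pm}_{\alpha,j})$ collapse, again by $|a-b|=a\vee b-a\wedge b$, to $|u_j^n-k^{\pm}_{\alpha,j}|$, while the four numerical-flux terms regroup into $-\lambda(\mF^n_{\jph}-\mF^n_{\jmh})$, with $\mF^n_{\jph}$ precisely the difference of $\bar A$ evaluated at the $\vee$- and $\wedge$-states stated in the lemma (the subscript $x_h$ there being a typographical slip for $x_j$). This produces \eqref{ent_discrete_A}.

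The one genuine subtlety, and the step I would handle most carefully, is the range restriction: Lemma~\ref{lemma_A_scheme_monotone} yields monotonicity only when all arguments lie in $[-\mM,\mM]$. Since $|u_j^n|\le\mM$ by Lemma~\ref{lemma:A_u_bound}, this is automatic once $|k^{\pm}_{\alpha,j}|\le\mM$, which holds for all $\alpha\in[0,\bar{\alpha}]$ because $\alpha\mapsto k^{+}_{\alpha}(x)$ is increasing and $\alpha\mapsto k^{-}_{\alpha}(x)$ is decreasing, so that $k^{-}_{\bar{\alpha}}(x)\le k^{-}_{\alpha}(x)\le k^{+}_{\alpha}(x)\le k^{+}_{\bar{\alpha}}(x)$. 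For the remaining levels $\alpha>\bar{\alpha}$ one has $u_j^n<k^{+}_{\alpha,j}$ (respectively $u_j^n>k^{-}_{\alpha,j}$) at every node, so the extremal states degenerate; a direct computation then shows that $\mF^n_{\jph}-\mF^n_{\jmh}=-(\bar A^n_{\jph}-\bar A^n_{\jmh})$ and that \eqref{ent_discrete_A} reduces, via the conservation form \eqref{scheme_A}, to an equality. I would dispose of this case separately rather than force it through the monotonicity bound.
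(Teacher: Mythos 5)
Your proof is correct and follows essentially the same route as the paper: the Crandall--Majda argument combining monotonicity of the scheme (Lemma~\ref{lemma_A_scheme_monotone}), stationarity of the grid functions $\{k^{\pm}_{\alpha,j}\}_{j\in\Z}$ (the computation of Lemma~\ref{lemma:k_stationary_A}, which indeed extends verbatim to every $\alpha\ge 0$), and the lattice identity $a\vee b-a\wedge b=\abs{a-b}$, with your $H_j(\vec u\vee\vec k)\ge u_j^{n+1}\vee k^{\pm}_{\alpha,j}$, $H_j(\vec u\wedge\vec k)\le u_j^{n+1}\wedge k^{\pm}_{\alpha,j}$ being exactly the paper's $\Gamma(K^{\pm}_{\alpha})\vee\Gamma(U^n)\le\Gamma(K^{\pm}_{\alpha}\vee U^n)$ and $\Gamma(K^{\pm}_{\alpha})\wedge\Gamma(U^n)\ge\Gamma(K^{\pm}_{\alpha}\wedge U^n)$. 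The one place you go beyond the paper is a matter of rigor rather than method: the paper silently applies monotonicity to $K^{\pm}_{\alpha}\vee U^n$ and $K^{\pm}_{\alpha}\wedge U^n$ even though Lemma~\ref{lemma_A_scheme_monotone} was proved only for arguments in $[-\mM,\mM]$, whereas you verify $\abs{k^{\pm}_{\alpha,j}}\le\mM$ for $\alpha\le\bar{\alpha}$ and dispose of the levels $\alpha>\bar{\alpha}$ separately, where the extremal states degenerate and \eqref{ent_discrete_A} collapses to an identity via the conservation form \eqref{scheme_A} --- a legitimate patch of a gap the paper's own proof leaves implicit.
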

\begin{proof}
	The proof is a slightly generalized version of a now classical argument found in
	\cite{CranMaj:Monoton} or \cite{Holden_Risebro}.	
	Denote the grid function $\{u_j^n\}_{j \in \Z}$ by $U^n$, and
	write the scheme defined by
	\eqref{scheme_A}	as $U^{n+1} = \Gamma(U^n)$, i.e.,
	$\Gamma(\cdot)$ is the operator that advances the solution from
	time level $n$ to $n+1$.
	Let $K^{\pm}_{\alpha} = \{k_{\alpha}^{\pm}(x_j)\}_{j \in \Z}$. Since the scheme is
	monotone, we have
	\begin{equation}\label{ent_est1}
	\Gamma(K^{\pm}_{\alpha}) \vee \Gamma(U^n) \le \Gamma(K^{\pm}_{\alpha} \vee U^n), \quad
	\Gamma(K^{\pm}_{\alpha}) \wedge \Gamma(U^n) \ge \Gamma(K^{\pm}_{\alpha} \wedge U^n).
	\end{equation}
	Using the fact that 
	$\Gamma(K^{\pm}_{\alpha}) = K^{\pm}_{\alpha}$, it follows from
	\eqref{ent_est1} that
	\begin{equation}\label{ent_est2}
	U^{n+1} \vee K^{\pm}_{\alpha} - U^{n+1} \wedge K^{\pm}_{\alpha}
	\le  \Gamma(K^{\pm}_{\alpha} \vee U^n)
	-  \Gamma(K^{\pm}_{\alpha} \wedge U^n).
	\end{equation}
	The discrete entropy inequality \eqref{ent_discrete_A} then follows from \eqref{ent_est2}, using
	the definition of $\Gamma(\cdot)$ in terms of \eqref{scheme_A} along
	with the identity $a \vee b - a \wedge b = \abs{a-b}$.
\end{proof}

\begin{lemma}\label{lemma_k_converge}
Let
\begin{equation}
k_{\alpha}^{\pm,\D}(x) = \sumj \chi_j(x) k_{\alpha,j}^{\pm}.
\end{equation}
Then
\begin{equation}
\textrm{$k_{\alpha}^{\pm,\D}(x) \rightarrow k_{\alpha}^{\pm}(x)$
in $L^1_{\loc}(\R)$ and pointwise a.e. in $\R$.}
\end{equation}
\end{lemma}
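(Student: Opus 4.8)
The plan is to recognize $k_{\alpha}^{\pm,\D}$ as the piecewise-constant sampling of the function $k_\alpha^{\pm}$ on the spatial grid, that is $k_\alpha^{\pm,\D}(x) = k_\alpha^{\pm}(x_{j})$ where $j$ (depending on $\D$) is the unique index with $x \in I_j = [x_j-\Delta x/2, x_j+\Delta x/2)$, and then to deduce both convergences from two ingredients: (a) $x \mapsto k_\alpha^{\pm}(x)$ is continuous at almost every $x \in \R$, and (b) $k_\alpha^{\pm}$ is bounded uniformly in $x$. Granting these, the pointwise statement is immediate, since $x \in I_j$ forces $\abs{x_j - x} \le \Delta x/2 \to 0$, so continuity of $k_\alpha^{\pm}$ at $x$ yields $k_\alpha^{\pm,\D}(x) = k_\alpha^{\pm}(x_j) \to k_\alpha^{\pm}(x)$ at every continuity point, hence a.e.; the $L^1_{\loc}$ statement then follows from the uniform bound (b) via the dominated convergence theorem applied on each interval $[-r,r]$. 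Ingredient (b) is quick: exactly as in the proof of Lemma~\ref{k_bounded}, the relation $A(k_\alpha^{\pm}(x),x)=\alpha$ together with \eqref{uniform_unimodal} gives $\abs{k_\alpha^{\pm}(x)-u_M(x)} \le \gamma^{-1}(\alpha)$, whence $\abs{k_\alpha^{\pm}(x)} \le \norm{u_M}_{\infty}+\gamma^{-1}(\alpha)$, a bound inherited by $k_\alpha^{\pm,\D}$.

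The real work is ingredient (a), which I would establish by a compactness-plus-uniqueness argument. Fix a point $x$ at which both $a$ and $u_M$ are continuous; since $a, u_M \in \R$-valued $BV$ functions by Assumptions \descref{H2}{H-2} and \descref{H4}{H-4}, the exceptional set is countable and hence of measure zero. Let $x_n \to x$ be arbitrary. By (b) the sequence $\{k_\alpha^+(x_n)\}$ is bounded, so it suffices to show that every subsequential limit $\ell$ equals $k_\alpha^+(x)$. Passing to a subsequence along which $k_\alpha^+(x_n) \to \ell$, I would take the limit in $A(k_\alpha^+(x_n),x_n)=\alpha$ through the splitting
\begin{equation*}
\abs{A(k_\alpha^+(x_n),x_n) - A(\ell,x)} \le \abs{A(k_\alpha^+(x_n),x_n) - A(\ell,x_n)} + \abs{A(\ell,x_n) - A(\ell,x)},
\end{equation*}
bounding the first term by $C\,\abs{k_\alpha^+(x_n)-\ell}$ via the local Lipschitz estimate of Assumption \descref{H1}{H-1}, and the second by $R(\ell)\abs{a(x_n)-a(x)}$ via Assumption \descref{H2}{H-2}; both vanish as $n \to \infty$ because $a$ is continuous at $x$. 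This yields $A(\ell,x)=\alpha$, while $\ell \ge \lim_n u_M(x_n) = u_M(x)$ (using continuity of $u_M$ at $x$) places $\ell$ on the increasing branch of $A(\cdot,x)$. By the uniqueness of the level value defining $k_\alpha^+$ in Assumption \descref{H3}{H-3}, $\ell = k_\alpha^+(x)$, so the full sequence converges and $k_\alpha^+$ is continuous at $x$. The argument for $k_\alpha^-$ is identical, using the decreasing branch and the constraint $\ell \le u_M(x)$.

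I expect the continuity step (a) to be the only delicate point. Because genuine nonlinearity is not assumed, $u \mapsto A(u,x)$ may be flat and $k_\alpha^{\pm}$ need not be $BV$, so no quantitative modulus of continuity is available; this is precisely why I avoid any such estimate and instead run the boundedness-plus-uniqueness argument above, which delivers continuity at every continuity point of the $BV$ data $a$ and $u_M$. That is exactly the almost-everywhere statement needed to drive the grid-refinement (pointwise) and dominated-convergence ($L^1_{\loc}$) steps to their conclusions.
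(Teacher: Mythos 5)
Your proof is correct, but it takes a genuinely different route from the paper's. The paper never proves (nor needs) that $k_\alpha^{\pm}$ is continuous a.e.; it stays inside the singular-mapping framework: writing $\abs{\Psi(k_{\alpha}^{\pm,\D}(x),x)-\Psi(k_{\alpha}^{\pm}(x),x)}$ and splitting by the triangle inequality, the terms $\Psi(k_{\alpha,j}^{\pm},x_j)$ and $\Psi(k_{\alpha}^{\pm}(x),x)$ cancel exactly because both equal the signed level value $\pm\alpha$, so the entire error collapses to the spatial modulus $\abs{\Psi(k_{\alpha,j}^{\pm},x)-\Psi(k_{\alpha,j}^{\pm},x_j)}$, which is bounded by $\bar{R}\abs{a(x)-a^{\D}(x)}$ via Assumption (\descref{H2}{H-2}) (strictly, Lemma~\ref{lemma:Psi_lipschitz} also produces a term $L\abs{u_M(x)-\sumj\chi_j(x)u_M(x_j)}$, which the paper drops but which is handled identically using (\descref{H4}{H-4})); the convergence $a^{\D}\rr a$ of Lemma~\ref{lemma_a_converge} and the continuity of $\zeta\mapsto\Psi^{-1}(\zeta,x)$ then finish. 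Your argument instead proves a.e. continuity of $k_\alpha^{\pm}$ directly, by boundedness plus uniqueness of subsequential limits, and concludes by sampling at continuity points and dominated convergence, never touching $\Psi$. The two proofs lean on the same hidden pillar: your appeal to uniqueness of the branch solution of $A(\ell,x)=\alpha$, $\ell\ge u_M(x)$, is the exact counterpart of the paper's appeal to invertibility and continuity of $\Psi^{-1}(\cdot,x)$, both resting on strict monotonicity of the branches of $A(\cdot,x)$. What the paper's route buys is a quantitative pointwise estimate in $\Psi$-coordinates, controlled by the oscillation of $a$ (consistent with your correct remark that $k_\alpha^{\pm}$ itself admits no modulus of continuity), plus reuse of machinery already established. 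What your route buys is a self-contained and arguably more transparent argument, the stronger structural fact that $k_\alpha^{\pm}$ is continuous off a countable set, and a cleaner justification of the pointwise a.e. claim: the paper cites the $L^1_{\loc}$ bound \eqref{convergence_a(x)} to infer a.e. convergence of the full sequence $\Psi(k_{\alpha}^{\pm,\D}(x),x)$, which strictly speaking only yields a.e. convergence along a subsequence, whereas the honest mechanism is convergence of $a^{\D}(x)$ at every continuity point of the BV function $a$ --- precisely the mechanism your proof makes explicit.
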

\begin{proof}
	We first show that $\Psi(k^{\pm,\D}_\al(\cdot),\cdot)\rr\Psi(k^{\pm}_\al(\cdot),\cdot)$ as $\D\rr0$. Observe that 
	\begin{equation*}
	\Psi(k^{\pm,\D}(x),x)= \sumj \chi_j(x) \Psi(k_{\alpha,j}^{\pm},x).
	\end{equation*}
	This yields
	\begin{eqnarray*}
	\abs{\Psi(k^{\pm,\D}_{\al}(x),x)-\Psi(k^{\pm}_\al(x),x)}
		&\leq&\sumj\chi_j(x)\abs{\Psi(k^{\pm}_{\al,j},x)-\Psi(k^{\pm}_\al(x),x)}\\
		&\leq&\sumj\chi_j(x)\abs{\Psi(k^{\pm}_{\al,j},x)-\Psi(k^{\pm}_{\al,j},x_j)}\\
		&+&\sumj\chi_j(x)\abs{\Psi(k^{\pm}_{\al,j},x_j)-\Psi(k^{\pm}_\al(x),x)}\\
		&=&\sumj\chi_j(x)\abs{\Psi(k^{\pm}_{\al,j},x)-\Psi(k^{\pm}_{\al,j},x_j)}\\
		&+&\sumj\chi_j(x)\abs{A(k^{\pm}_{\al,j},x_j)-A(k^{\pm}_\al(x),x)}\\
				&=&\sumj\chi_j(x)\abs{\Psi(k^{\pm}_{\al,j},x)-\Psi(k^{\pm}_{\al,j},x_j)}.
	\end{eqnarray*}
	By virtue of Assumption \descref{H2}{H-2} we obtain
	\begin{eqnarray*}
	\abs{\Psi(k^{\pm,\D}_{\al}(x),x)-\Psi(k^{\pm}_\al(x),x)}
	&\leq&\sumj\chi_j(x)R(k^{\pm}_{\al,j})\abs{a(x)-a(x_j)}\\
	&\leq&\bar{R}\abs{a(x)-\sumj\chi_j(x)a(x_j)}.
	\end{eqnarray*}
	From \eqref{convergence_a(x)}, we have $\Psi(k^{\pm,\D}_{\al}(x),x)\rr\Psi(k^{\pm}_\al(x),x)$ as $\D\rr0$ for a.e. $x\in\re$ . By using continuity of $\zeta\mapsto\Psi^{-1}(\zeta,x)$ for each fixed $x\in\re$ we have $k^{\pm,\D}_{\al}(x)\rr k^{\pm}_\al(x)$ as $\D\rr0$ for a.e. $x\in\re$.
\end{proof}

\begin{lemma}\label{lemma_a_converge}
Define
$a^{\D}(x):=\sumj\chi_j(x)a(x_j)$.
As $\D \rightarrow 0$, $a^{\D} \rightarrow a$ in $L^1_{\loc}(\R)$.
\end{lemma}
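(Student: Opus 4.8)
The plan is to exploit that $a \in BV(\re)$ (Assumption~\descref{H2}{H-2}), so that the piecewise-constant sampling $a^{\D}$ differs from $a$ by an amount controlled by the local oscillation of $a$, which is in turn controlled by its total variation. Since convergence in $L^1_{\loc}$ only requires control on each bounded interval, I would fix $M>0$ and estimate $\int_{-M}^{M}\abs{a^{\D}(x)-a(x)}\,dx$.

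First I would decompose this integral over the grid cells $I_j$ meeting $[-M,M]$. On each such cell $a^{\D}\equiv a(x_j)$, so the contribution is $\int_{I_j}\abs{a(x)-a(x_j)}\,dx$. The key pointwise estimate is that for $x\in I_j$ the difference $\abs{a(x)-a(x_j)}$ is bounded by the total variation of $a$ over the closed cell $\bar{I_j}$, since both $x$ and $x_j$ lie in $\bar{I_j}$. Integrating over $I_j$ (of length $\D x$) gives the bound $\D x\,\TV(a;\bar{I_j})$ for the contribution of each cell.

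Next I would sum over $j$. Because the cells $I_j$ partition $\re$ into adjacent intervals, the local total variations are additive, so $\sum_j \TV(a;\bar{I_j}) \le \TV(a) < \infty$. This yields $\int_{-M}^{M}\abs{a^{\D}-a}\,dx \le \D x\,\TV(a)$, which tends to $0$ as $\D \to 0$; as $M$ was arbitrary, this gives convergence in $L^1_{\loc}(\re)$. For the accompanying pointwise statement that was invoked earlier, I would observe that at every continuity point $x$ of $a$ the grid point $x_j$ of the cell containing $x$ satisfies $\abs{x-x_j}\le \D x/2 \to 0$, so $a^{\D}(x)=a(x_j)\to a(x)$; since a $BV$ function has at most countably many discontinuities, this holds for a.e.\ $x\in\re$.

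The main obstacle here is essentially bookkeeping rather than conceptual: one must apply the additivity of total variation over the partition correctly and ensure that the ambiguity of the pointwise value $a(x_j)$ at a jump point does not spoil the local bound. This last point is harmless because the estimate $\abs{a(x)-a(x_j)} \le \TV(a;\bar{I_j})$ holds for any value $a$ attains in $\bar{I_j}$, so the particular choice of representative is immaterial.
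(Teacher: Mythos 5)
Your proof is correct and follows essentially the same route as the paper: bound the error on each cell $I_j$ by a local quantity controlled by the variation of $a$, integrate to pick up a factor $\D x$, and sum over cells to obtain $\int\abs{a^{\D}-a}\,dx\le \D x\,\TV(a)\rr0$. The only cosmetic difference is that the paper bounds the cell-wise error by the oscillation $\sup_{I_j}a-\inf_{I_j}a$ rather than by $\TV(a;\bar{I_j})$, which is the same estimate in substance; your added remark on pointwise a.e.\ convergence at continuity points is a harmless (and useful) supplement.
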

\begin{proof}
Suppose 
\begin{eqnarray*}
\overline{m}_j&:=&\sup\{a(x);\,x\in I_j\},\\
\underline{m}_j&:=&\inf\{a(x);\,x\in I_j\}.
\end{eqnarray*}
Then for any $\si>0$, we have
\begin{equation}\label{convergence_a(x)}
\int\limits_{[-\si,\si]}\abs{a(x)-a^{\D}(x)}\,dx\leq\sumj\int\limits_{I_j}(\overline{m}_j-\underline{m}_j)\,dx\leq \D x\,\TV(a)\rr0\mbox{ as }\D x\rr0.
\end{equation}
\end{proof}

\begin{lemma}\label{lemma_lim_kruzkov}
The (subsequential) limit $u$ guaranteed by Lemma~\ref{lemma_convergence} satisfies the adapted entropy inequalities \eqref{kruzkov_type}.
\end{lemma}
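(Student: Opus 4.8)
The plan is to start from the discrete entropy inequality \eqref{ent_discrete_A} and convert it into the weak form \eqref{kruzkov_type} by a Lax--Wendroff type argument, passing to the limit with the convergence results already established. Fix $\al\ge0$, a sign $\pm$, and $0\le\phi\in C_c^\f(\re\times[0,\infty))$, and write $\phi_j^n:=\phi(x_j,t^n)$. Rewriting \eqref{ent_discrete_A} as $\abs{u_j^{n+1}-k^{\pm}_{\al,j}}-\abs{u_j^{n}-k^{\pm}_{\al,j}}+\lambda(\mF^n_{\jph}-\mF^n_{\jmh})\le0$, multiplying by $\D x\,\phi_j^n\ge0$, and summing over $j\in\Z$ and $n\ge0$, I would perform summation by parts in $n$ (the boundary term at the largest time level vanishes since $\phi$ has compact temporal support) and in $j$. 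Using $\lambda\,\D x=\D t$ and multiplying through by $-1$, this produces the three discrete sums
\begin{equation*}
\sum_{n\ge1}\sumj\abs{u_j^n-k^{\pm}_{\al,j}}\frac{\phi_j^n-\phi_j^{n-1}}{\D t}\,\D x\,\D t
+\sum_{n\ge0}\sumj\mF^n_{\jph}\frac{\phi_{j+1}^n-\phi_j^n}{\D x}\,\D x\,\D t
+\sumj\abs{u_j^0-k^{\pm}_{\al,j}}\phi_j^0\,\D x\ \ge\ 0,
\end{equation*}
whose three terms are Riemann-sum approximations of the three integrals in \eqref{kruzkov_type}; note that the sign produced by the summation by parts is exactly the one required to recover the inequality $\ge0$.

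For the first and third sums the passage to the limit is routine. The difference quotients $(\phi_j^n-\phi_j^{n-1})/\D t$ and the sampled values $\phi_j^0$ converge uniformly to $\pa_t\phi$ and $\phi(\cdot,0)$ by smoothness of $\phi$; meanwhile $\abs{u^\D-k^{\pm,\D}_\al}\to\abs{u-k^{\pm}_\al}$ in $L^1_\loc$ by Lemma~\ref{lemma_convergence} and Lemma~\ref{lemma_k_converge}, and the cell averages satisfy $u_0^\D\to u_0$ in $L^1_\loc$. Since every quantity is bounded uniformly in $\D$ by Lemmas~\ref{k_bounded} and \ref{lemma:A_u_bound}, dominated convergence sends these two sums to $\int\!\int\abs{u-k^{\pm}_\al}\pa_t\phi$ and $\int\abs{u_0-k^{\pm}_\al}\phi(\cdot,0)$, respectively.

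The heart of the matter, and the step I expect to be the main obstacle, is the flux term: I must show that the piecewise-constant numerical entropy flux $\mF^\D(x,t):=\sum_{n}\sumj\chi_j(x)\chi^n(t)\,\mF^n_{\jph}$ converges a.e.\ in $\Pi_T$ to $Q(u,x):=sgn(u-k^{\pm}_\al(x))(A(u,x)-\al)$. The consistency relation $\bar A(w,w,x,x)=A(w,x)$ together with $A(k^{\pm}_\al(x),x)=\al$ gives $\bar A(u\vee k,u\vee k,x,x)-\bar A(u\wedge k,u\wedge k,x,x)=Q(u,x)$ with $k=k^{\pm}_\al(x)$, so $Q$ is the correct diagonal limit. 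The difficulty is that $\mF^n_{\jph}$ couples the two adjacent cells $x_j,x_{j+1}$, and since no spatial $TV$ bound on $u^\D$ is available, I cannot control the consistency error cell-by-cell via $\sumj\abs{u_{j+1}^n-u_j^n}$. Instead I would argue pointwise, using the singular mapping: because $z^\D\to z$ in $L^1_\loc$ with $z\in BV$, the one-cell shift obeys $\int_{-\sigma}^{\sigma}\abs{z^n_{j+1}-z^n_j}\,dx\le\D x\sumj\abs{z^n_{j+1}-z^n_j}\le C\,\D x\to0$ by Lemma~\ref{lemma_tv_bound_z}, so the shift of $z^\D$ also converges to $z$ in $L^1_\loc$; hence, along a further subsequence, both $z_j^n\to z(x,t)$ and $z_{j+1}^n\to z(x,t)$ for a.e.\ $(x,t)$. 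At a.e.\ $x$ the $BV$ functions $a$ and $u_M$ are continuous, so $a(x_j),a(x_{j+1})\to a(x)$, $u_M(x_j),u_M(x_{j+1})\to u_M(x)$, and $\zeta\mapsto\Psi^{-1}(\zeta,\cdot)$ is continuous there; thus $u_j^n=\Psi^{-1}(z_j^n,x_j)\to u(x,t)$ and likewise $u_{j+1}^n\to u(x,t)$. Combining these with $k^{\pm}_{\al,j},k^{\pm}_{\al,j+1}\to k^{\pm}_\al(x)$ (Lemma~\ref{lemma_k_converge}) and the joint continuity of $\bar A$ furnished by Lemma~\ref{lemma_flux_estimates} yields $\mF^\D(x,t)\to Q(u(x,t),x)$ a.e. As $\mF^\D$ is bounded uniformly in $\D$ and $(\phi_{j+1}^n-\phi_j^n)/\D x\to\pa_x\phi$ uniformly, bounded convergence gives
\begin{equation*}
\sum_{n\ge0}\sumj\mF^n_{\jph}\frac{\phi_{j+1}^n-\phi_j^n}{\D x}\,\D x\,\D t
\ \longrightarrow\
\int\limits_{\re_+}\int\limits_{\re}sgn(u-k^{\pm}_\al(x))(A(u,x)-\al)\,\pa_x\phi\,dx\,dt .
\end{equation*}

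Collecting the three limits reproduces \eqref{kruzkov_type}, valid for every $\al\ge0$ and both signs $\pm$. The only genuinely delicate point is the a.e.\ convergence of the coupled numerical entropy flux: once the two adjacent cell values are shown to converge to the common limit $u(x,t)$ at a.e.\ point—which is precisely where the $TV$ bound on $z$ from Lemma~\ref{lemma_tv_bound_z} (rather than an unavailable bound on $u$) is used—the remaining estimates are continuity arguments supplied by Lemmas~\ref{lemma_flux_estimates} and \ref{lemma_convergence}.
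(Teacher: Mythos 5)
Your proposal is correct, and its overall skeleton (discrete entropy inequality, multiplication by $\phi_j^n\,\D x$, summation by parts, routine limits for the time and initial-data terms) is the same as the paper's; where you genuinely diverge is in the crucial step, the convergence of the two-point numerical entropy flux to $sgn(u-k^{\pm}_{\al}(x))(A(u,x)-\al)$. The paper stays entirely in $L^1$: it writes $\mF^n_{\jph}=\bar A(v_j^n,v_{j+1}^n,x_j,x_{j+1})-\bar A(w_j^n,w_{j+1}^n,x_j,x_{j+1})$ with $v_j^n=u_j^n\vee k^{\pm}_{\al,j}$, $w_j^n=u_j^n\wedge k^{\pm}_{\al,j}$, adds and subtracts $A(v_j^n,x_j)$, bounds the error by $L\abs{v_{j+1}^n-v_j^n}$ plus increments of $a$ and $u_M$ (Lemma~\ref{lemma_flux_estimates}), kills the first term by translation continuity of the $L^1_{\loc}$-convergent family $v^{\D}$ (Kolmogorov criterion) and the others by $\D x\,\TV(a)$ and $\D x\,\TV(u_M)$, and finally compares $A(v_j^n,x_j)$ with $A(v,x)$ using $a^{\D}\to a$ and $v^{\D}\to v$ in $L^1_{\loc}$. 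You instead argue pointwise: the uniform bound $\sumj\abs{z_{j+1}^n-z_j^n}\le C$ of Lemma~\ref{lemma_tv_bound_z} makes the one-cell shift of $z^{\D}$ differ from $z^{\D}$ by $O(\D x)$ in $L^1_{\loc}$, so along a further subsequence both $z_j^n$ and $z_{j+1}^n$ converge a.e.\ to $z(x,t)$; inverting the singular mapping at the (a.e.) points where $a$ and $u_M$ are continuous sends both adjacent cell values to $u(x,t)$, after which continuity of $\bar A$ and bounded convergence finish the job. Both routes are valid. Yours isolates very cleanly the mechanism by which the coupling between adjacent cells is broken, with an explicit $O(\D x)$ shift rate; the paper's avoids any further subsequence extraction and any pointwise inversion of $\Psi$. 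Two small remarks: your step ``$u_{j+1}^n=\Psi^{-1}(z_{j+1}^n,x_{j+1})\to\Psi^{-1}(z(x,t),x)$'' deserves one more line of justification (locally uniform convergence $\Psi(\cdot,x_{j+1})\to\Psi(\cdot,x)$ from Lemma~\ref{lemma:Psi_lipschitz} at continuity points of $a,u_M$, strict monotonicity of $\Psi(\cdot,x)$, and the uniform bound $\abs{u_j^n}\le\mM$); and your claim that the shift control is ``precisely where'' the TV bound on $z$ is needed is a slight overstatement---translation continuity of any $L^1_{\loc}$-convergent sequence would suffice, which is exactly how the paper dispatches its term $S_1$---though your route through Lemma~\ref{lemma_tv_bound_z} is equally legitimate.
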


\begin{proof}
Fix $\alpha \ge 0$. Define
\begin{equation}
v_j^n := u_j^n \vee k^{\pm}_{\alpha,j}, \quad w_j^n := u_j^n \wedge k^{\pm}_{\alpha,j},
\end{equation}
and
\begin{equation}
v^{\D}(x,t) := \sum_{n=0}^N \sumj \chi_j(x) \chi^n(t) v_j^n, \quad 
w^{\D}(x,t) := \sum_{n=0}^N \sumj \chi_j(x) \chi^n(t) w_j^n.
\end{equation}
We can rewrite $v^{\D}$ and $w^{\D}$ as follows 
\begin{eqnarray*}
	2v^{\D}(x,t)=u^{\D}(x,t)+k^{\pm,\D}_{\al}(x,t)+\abs{u^{\D}(x,t)-k^{\pm,\D}_{\al}(x,t)},\\
	2w^{\D}(x,t)=u^{\D}(x,t)+k^{\pm,\D}_{\al}(x,t)-\abs{u^{\D}(x,t)-k^{\pm,\D}_{\al}(x,t)}.
\end{eqnarray*}
Invoking the convergence results for $u^{\D}$ and for $k^{\pm,\D}_{\alpha}$, we have
\begin{equation}
v^{\D} \rightarrow  \underbrace{u \vee k^{\pm}_{\alpha}}_{=:v}, \quad 
w^{\D} \rightarrow  \underbrace{u \wedge k^{\pm}_{\alpha}}_{=:w}
\end{equation}
pointwise a.e. and in $L^1_{\loc}(\Pi_T)$.  
	From Lemma \ref{entropy_lemma1_A} we have
	\begin{equation}\label{ent_discrete_j}
	\abs{u^{n+1}_j - k^{\pm}_{\alpha,j}} 
	\le \abs{u_j^n - k^{\pm}_{\alpha,j}}
	- \lambda (\mF^n_{\jph} - \mF^n_{\jmh}),\mbox{ for all }i\in\Z
	\end{equation}
	where
	\begin{equation*}
	\mathcal{F}^n_{\jph} = \bar{A}(u_j^n \vee k^{\pm}_{\alpha,j},u_{j+1}^n \vee k^{\pm}_{\alpha,j+1},x_j,x_{j+1})	
	-  \bar{A}(u_j^n \wedge k^{\pm}_{\alpha,j},u_{j+1}^n \wedge k^{\pm}_{\alpha,j+1},x_j,x_{j+1}).
	\end{equation*}
Let $0 \le \phi(x,t) \in C_0^1(\R \times (0,T))$ be a test function, and define $\phi_j^n = \phi(x_j,t^n)$.
As in the proof of the Lax-Wendroff theorem, we multiply \eqref{ent_discrete_j} by $\phi_j^n \D x$, and then
sum by parts to arrive at
\begin{equation}\label{ent12}
\begin{split}
\D x \D t \sum_{n=0}^N \sumj \abs{u_j^{n+1} - k^{\pm}_{\alpha,j}} (\phi_j^{n+1} -\phi_j^n)/\D t
&+\D x \D t \sum_{n=0}^N \sumj \mathcal{F}^n_{j+1/2} (\phi_{j+1}^{n} -\phi_j^n)/\D x\\
&+ \D x \sumj \abs{u_j^0 -  k^{\pm}_{\alpha,j}}\phi_j^0 \ge 0.
\end{split}
\end{equation}
The first and third sums on the left side of \eqref{ent12} converge to
$\int_{\re_+}\int_{\re}\frac{\pa\phi}{\pa t}|u(x,t)-k^{\pm}_{\al}(x)|\,dxdt$
and 
$\int_{\re} \abs{u_0(x) -k^{\pm}_{\al}(x)}\phi(x,0) \,dx$, respectively.
The crucial part of the argument is to prove convergence of the second sum on left hand side of \eqref{ent12}. It suffices to prove that
\begin{equation}\label{Int_1}
\mathcal{I}_1:=\D x \D t \sum_{n=0}^N \sumj \bar{A}(v_j^n,v_{j+1}^n,x_j,x_{j+1}) (\phi_{j+1}^{n} -\phi_j^n)/\D x \rightarrow
\int_0^T \int_{\R} A(v,x) \phi_x \, dx \, dt,
\end{equation}
and that
\begin{equation}\label{Int_2}
\mathcal{I}_2:=\D x \D t \sum_{n=0}^N \sumj \bar{A}(w_j^n,w_{j+1}^n,x_j,x_{j+1}) (\phi_{j+1}^{n} -\phi_j^n)/\D x \rightarrow
\int_0^T \int_{\R} A(w,x) \phi_x \, dx \, dt.
\end{equation}
We will prove \eqref{Int_1}. The proof of \eqref{Int_2} is similar. We start with the following identity:
\begin{equation}
\begin{split}
\bar{A}(v_j^n,v_{j+1}^n,x_j,x_{j+1}) - \bar{A}(v_j^n,v_{j}^n,x_j,x_{j})
&=\bar{A}(v_j^n,v_{j+1}^n,x_j,x_{j+1}) - \bar{A}(v_j^n,v_{j}^n,x_j,x_{j+1})\\
&+\bar{A}(v_j^n,v_{j}^n,x_j,x_{j+1}) - \bar{A}(v_j^n,v_{j}^n,x_j,x_{j}).
\end{split}
\end{equation}
Taking absolute values, using $\bar{A}(v_j^n,v_{j}^n,x_j,x_{j}) = A(v_j^n,x_j)$, and \eqref{flux_estimate},
we have
\begin{equation}
\begin{split}
\abs{\bar{A}(v_j^n,v_{j+1}^n,x_j,x_{j+1}) - A(v_j^n,x_j)}
&\le L \abs{v_{j+1}^n-v_j^n}+ L \abs{u_M(x_{j+1}) - u_M(x_j)}\\
&+R(\min(v_j^n,u_M(x_j)))\abs{a(x_{j+1})-a(x_j)}.
\end{split}
\end{equation}
Thus, with the abbreviation $\rho_j^n := (\phi_{j+1}^{n} -\phi_j^n)/\D x$,
\begin{equation}\label{Int_3}
\begin{split}
&\D x \D t \sum_{n=0}^N \sumj \abs{\bar{A}(v_j^n,v_{j+1}^n,x_j,x_{j+1}) - A(v_j^n,x_j)} \rho_j^n
\le L \underbrace{\D x \D t \sum_{n=0}^N \sumj \abs{v_{j+1}^n-v_j^n} \rho_j^n}_{S_1} \\
&+ \underbrace{\bar{R}\D x \D t \sum_{n=0}^N \sumj \abs{a(x_{j+1})-a(x_j)} \rho_j^n}_{S_2}
+ L \underbrace{\D x \D t \sum_{n=0}^N \sumj \abs{u_M(x_{j+1}) - u_M(x_j)} \rho_j^n}_{S_3}.
\end{split}
\end{equation}
For $S_1$, we can invoke the Kolmogorov compactness criterion \cite{Holden_Risebro} since $v^{\D}$ converges in
$L^1_{\loc}(\Pi_T)$, and conclude that $S_1 \rightarrow 0$.  By Assumption \descref{H2}{H-2} and \descref{H4}{H-4}, ($a\in\BV(\re)$ and $u_M \in \BV(\R)$), we also have
$S_2,S_3 \rightarrow 0$.
As a result, in order to prove \eqref{Int_1} it suffices to show that
\begin{equation}\label{Int_1A}
\mathcal{I}_1:=\D x \D t \sum_{n=0}^N \sumj A(v_j^n,x_j) (\phi_{j+1}^{n} -\phi_j^n)/\D x \rightarrow
\int_0^T \int_{\R} A(v,x) \phi_x \, dx \, dt.
\end{equation}
This limit then follows from the estimate
\begin{equation}
\begin{split}
\abs{A(v_j^n,x_j)- A(v,x)} 
&\le \abs{A(v_j^n,x_j)-A(v_j^n,x)} + \abs{A(v_j^n,x) - A(v,x)}\\
&\le R(v_j^n)\abs{a(x_j)-a(x)} + L\abs{v_j^n - v},
\end{split}
\end{equation}
along with the fact that $a^{\D}\rightarrow a$ in $L^1_{\loc}(\R)$ (Lemma~\ref{lemma_a_converge}), 
and $v^{\D} \rightarrow v$ in $L^1_{\loc}(\Pi_T)$.
\end{proof}

We can now prove Theorem~\ref{theorem1}.
\begin{proof}
Taken together, Lemma~\ref{lemma_convergence} and Lemma~\ref{lemma_lim_kruzkov} establish that
the approximations $u^{\D}$ converge in $L^1_{\loc}(\Pi_T)$ and pointwise a.e. in $\Pi_T$, along a subsequence, 
to a function $u \in L^{\infty}(\Pi_T) \cap C([0,T]:L^1_{\loc}(\R))$, and $u$
is an adapted entropy solution in the sense of Definition~\ref{entropy_solution}. By Theorem~\ref{thm:uniqueness}, 
u is the unique solution to the Cauchy problem (\ref{11})--(\ref{ini12}) with initial data $u_0$. Moreover, as a consequence
of the uniqueness result, the entire computed sequence $u^{\D}$ converges to  $u$, not just a subsequence.
The final step of the proof is to extend the result to the case of $u_0 \in L^{\infty}(\R)$, as described in Section~\ref{section_compactness_direct}.
\end{proof}

\section{Appendix}
Let $g:\re\rr\re$ be defined as $g(x)=\abs{x}$.
Suppose $u$ satisfies entropy condition \eqref{AEC} then set $v(x,t)=\Psi(u(x,t),x)$ where $\Psi$ is as in \eqref{Psi}. We denote inverse of the map $\zeta\mapsto \Psi(\zeta,x)$ by $\al(\cdot,x)$. Then we have
\begin{equation}\label{inequality:modulated}
\int\limits_{0}^{\f}\int\limits_{\re}\left[\abs{\al(v(x,t),x)-\al(k,x)}\frac{\pa \phi}{\pa t}+sgn(v-k)(g(v)-g(k))\frac{\pa \phi}{\pa x}\right]\,dxdt\geq0
\end{equation}
for any $k\in\re$ and $\phi\in C_0^{\f}(\re\times\re_+)$.

\begin{lemma}[\cite{Panov2009a}]\label{lemma:v_1v_2}
	Let $v_1,v_2\in L^{\f}(\re)$ be two functions satisfying \eqref{inequality:modulated}. Then we have
	\begin{equation}
	\frac{\pa}{\pa t}\abs{v_1-v_2}+\frac{\pa}{\pa x}sgn(v_1-v_2)(g(v_1)-g(v_2))\leq0\mbox{ in }\mathcal{D}^{\p}(\re\times\re_+).
	\end{equation}
\end{lemma}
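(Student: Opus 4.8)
The plan is to establish the inequality by Kru\v{z}kov's doubling of variables, adapted to the modulated entropy functional in \eqref{inequality:modulated}. I would invoke \eqref{inequality:modulated} twice: once for $v_1$, regarded as a function of $(x,t)$, with the constant $k$ taken to be the frozen value $v_2(y,s)$; and once for $v_2$, regarded as a function of $(y,s)$, with $k=v_1(x,t)$. Integrating each against a common nonnegative test function $\Phi(x,t,y,s)$ and adding produces a single inequality on $\re^2\times\re_+^2$. The feature that makes this tractable, in contrast with a direct Kru\v{z}kov argument for the $x$-dependent flux $A$, is that after the singular mapping the flux $g(v)=\abs{v}$ carries no explicit spatial dependence; the only spatial dependence now sits inside the entropy density $\abs{\alpha(v,x)-\alpha(k,x)}$.

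Next I would concentrate $\Phi$ on the diagonal by choosing
\begin{equation*}
\Phi(x,t,y,s)=\beta\!\left(\tfrac{x+y}{2},\tfrac{t+s}{2}\right)\rho_\e\!\left(\tfrac{x-y}{2}\right)\rho_\e\!\left(\tfrac{t-s}{2}\right),
\end{equation*}
where $0\le\beta\in C_0^{\f}(\re\times\re_+)$ and $\rho_\e$ is a standard mollifier. The two flux contributions carry the common, symmetric coefficient $sgn(v_1-v_2)(g(v_1)-g(v_2))$ and are paired with $\pa_x\Phi$ and $\pa_y\Phi$ respectively; since $(\pa_x+\pa_y)\Phi$ annihilates the $\rho_\e\!\left(\tfrac{x-y}{2}\right)$ factor and leaves $\beta_x$, letting $\e\rr0$ collapses this to $\int\!\int sgn(v_1-v_2)(g(v_1)-g(v_2))\,\beta_x\,dx\,dt$ at a.e.\ diagonal point. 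This step is routine, relying only on the Lipschitz continuity of $g$ and the $L^1_{\loc}$ convergence afforded by the mollification.

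The delicate step, and the main obstacle, is the entropy part. After adding, the two entropy densities are $\abs{\alpha(v_1(x,t),x)-\alpha(v_2(y,s),x)}$ and $\abs{\alpha(v_1(x,t),y)-\alpha(v_2(y,s),y)}$, evaluated at the two distinct spatial arguments $x$ and $y$ precisely because the modulation $\alpha(\cdot,\cdot)$ is $x$-dependent. To merge them I would replace $\alpha(\cdot,y)$ by $\alpha(\cdot,x)$, estimating the error by $\abs{\alpha(z,x)-\alpha(z,y)}$ for $z$ in the bounded range of $v_1,v_2$. This I control by transporting Lemma~\ref{lemma:Psi_lipschitz} through the inverse: \eqref{uniform_unimodal} guarantees that each $\Psi(\cdot,x)$ is a proper increasing homeomorphism of $\re$ — here \eqref{uniform_unimodal} is essential, since $\pa_u\Psi=\abs{\pa_u A}$ vanishes at $u=u_M(x)$, so that $\alpha(\cdot,x)=\Psi^{-1}(\cdot,x)$ need not be Lipschitz — and since $a,u_M\in\BV(\re)$ are continuous off a countable set, Lemma~\ref{lemma:Psi_lipschitz} gives $\Psi(w,y)\rr\Psi(w,x)$ locally uniformly for a.e.\ $x$, whence $\alpha(z,y)\rr\alpha(z,x)$ by continuity of inversion. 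As $\alpha$ maps bounded sets to bounded sets and $x,y$ differ by $O(\e)$ on $\spt\Phi$, the replacement error vanishes as $\e\rr0$ by dominated convergence. Once both densities are referred to the argument $x$, the factor $(\pa_t+\pa_s)\Phi$ collapses to $\beta_t$ on the diagonal and $v_2(y,s)\rr v_2(x,t)$, so the two densities coincide and reduce to the entropy term of the asserted inequality. Passing $\e\rr0$ and then using the arbitrariness of $\beta\ge0$ yields the claimed distributional inequality in $\mathcal{D}^{\p}(\re\times\re_+)$.
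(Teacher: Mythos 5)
Your overall architecture coincides with the paper's: invoke \eqref{inequality:modulated} once for $v_1$ with $k=v_2(y,s)$ and once for $v_2$ with $k=v_1(x,t)$, add, and concentrate a mollified test function on the diagonal, using that $g$ has no spatial dependence so that the singular parts of $(\pa_x+\pa_y)\Phi$ cancel exactly against the common coefficient $sgn(v_1-v_2)(g(v_1)-g(v_2))$. Where you genuinely differ is in justifying the replacement of $\al(\cdot,y)$ by $\al(\cdot,x)$ in the second entropy density $P_2:=\abs{\al(v_1(x,t),y)-\al(v_2(y,s),y)}$ so as to merge it with $P_1:=\abs{\al(v_1(x,t),x)-\al(v_2(y,s),x)}$: the paper works with the set $E_2$ of Lebesgue points of the $C([-r,r])$-valued map $x\mapsto\al(\cdot,x)$ (measurability via Pettis' theorem), whereas you use that $a,u_M\in\BV(\re)$ are continuous off a countable set, push this through Lemma \ref{lemma:Psi_lipschitz}, and invert using the coercivity \eqref{uniform_unimodal}. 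That substitution is correct and is a legitimate, more elementary route to the same pointwise fact: for a.e.\ $x$, $\omega_x(\e):=\sup_{\abs{y-x}\le 2\e}\,\sup_{\abs{z}\le r}\abs{\al(z,x)-\al(z,y)}\rr 0$ as $\e\rr 0$.

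The gap is in how you dispose of this replacement error. The error $P_2-P_1$ is not paired with a bounded kernel: it multiplies $\pa_s\Phi$, and with your single-scale choice
\begin{equation*}
\pa_s\Phi=\tfrac12\,\beta_t\big(\tfrac{x+y}{2},\tfrac{t+s}{2}\big)\rho_\e\big(\tfrac{x-y}{2}\big)\rho_\e\big(\tfrac{t-s}{2}\big)
+\tfrac12\,\beta\big(\tfrac{x+y}{2},\tfrac{t+s}{2}\big)\rho_\e\big(\tfrac{x-y}{2}\big)\rho_\e'\big(\tfrac{t-s}{2}\big),
\end{equation*}
whose second summand has $L^1$-norm in $(y,s)$ of order $1/\e$. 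Your bound on the error is only $\abs{P_2-P_1}\le 2\,\omega_x(\e)$, which tends to zero for a.e.\ $x$ but with \emph{no rate}: $a,u_M$ are merely $\BV$, and, as you note yourself, $\al(\cdot,x)=\Psi^{-1}(\cdot,x)$ is not Lipschitz, so even a pointwise Lipschitz bound for $a,u_M$ at a.e.\ $x$ does not yield $\omega_x(\e)=O(\e)$. The contribution of the cross term is therefore of size $\omega_x(\e)/\e$, an indeterminate form, and ``dominated convergence'' cannot apply because the factor multiplying the pointwise-vanishing error is not uniformly integrable as $\e\rr0$. This is exactly why the paper (following Kru\v{z}kov and Panov) mollifies space and time at \emph{independent} scales, $\eta_\e(y-x)\eta_\de(s-t)$, and passes to the limit sequentially: first $\e\rr0$ with $\de$ fixed, so that $\eta_\de'$ is a fixed bounded kernel and the mere vanishing of $P_2-P_1$ kills the cross term, and only afterwards $\de\rr0$. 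With that modification your argument goes through (your BV-continuity step can indeed replace the paper's $E_2$/Pettis step). A final remark: both your proof and the paper's produce the entropy density $\abs{\al(v_1(x,t),x)-\al(v_2(x,t),x)}$ on the diagonal rather than $\abs{v_1-v_2}$; the former is what the subsequent contraction lemma uses, so the $\abs{v_1-v_2}$ in the statement should be read accordingly, and your closing claim that the merged density ``reduces to the entropy term of the asserted inequality'' inherits this same discrepancy.
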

\begin{proof}
	For $0\leq \phi\in C_c^{\f}(\re_+\times\re)$ and $0\leq\psi\in C_c^{\f}(\re_+\times\re)$ we have 
	\begin{equation}\label{inequality:modulated1}
	\int\limits_{0}^{\f}\int\limits_{\re}\left[\abs{\al(v_1(x,t),x)-\al(k,x)}\frac{\pa \phi}{\pa t}+sgn(v_1-k)(g(v_1)-g(k))\frac{\pa \phi}{\pa x}\right]\,dxdt\geq0
	\end{equation}
	and
	\begin{equation}\label{inequality:modulated2}
	\int\limits_{0}^{\f}\int\limits_{\re}\left[\abs{\al(v_2(y,s),y)-\al(l,y)}\frac{\pa \psi}{\pa s}+sgn(v_2-l)(g(v_2)-g(l))\frac{\pa \psi}{\pa y}\right]\,dyds\geq0.
	\end{equation}
	Fix a $\Phi\in C_c^{\f}(\re\times\re_+)$. Let $\eta_\e$ be Friedrichs mollifiers. Consider 
	\begin{eqnarray}
	\phi(x,t)&=&\Phi\left(x,t\right)\eta_\e\left({y-x}\right)\eta_\de\left({s-t}\right),\\
	\psi(y,s)&=&\Phi\left(x,t\right)\eta_\e\left({y-x}\right)\eta_\de\left({s-t}\right).
	\end{eqnarray}
	Putting $k=v_2(y,s)$ and $l=v_1(x,t)$ in \eqref{inequality:modulated1} and \eqref{inequality:modulated2} respectively and adding the resultants we get
	\begin{equation}
	\begin{array}{lll}
	&&\int\limits_{\re\times\re_+}\int\limits_{\re\times\re_+}P_1(x,t,y,s)\frac{\pa}{\pa t}(\Phi(x,t)\eta_\e(y-x)\eta_\de(s-t))\,dtdxdsdy\\
	&+&\int\limits_{\re\times\re_+}\int\limits_{\re\times\re_+}P_2(x,t,y,s)\frac{\pa}{\pa s}(\Phi(x,t)\eta_\e(y-x)\eta_\de(s-t))\,dtdxdsdy\\
	&+&\int\limits_{\re\times\re_+}\int\limits_{\re\times\re_+}Q(x,t,y,s)\eta_\e(y-x)\eta_\de(s-t)\frac{\pa}{\pa x}\Phi(x,t),dtdxdsdy\geq0
	\end{array}
	\end{equation}
	where
	\begin{eqnarray*}
		P_1(x,t,y,s)&:=&\abs{\al(v_1(x,t),x)-\al(v_2(y,s),x)},\\
		P_2(x,t,y,s)&:=&\abs{\al(v_1(x,t),y)-\al(v_2(y,s),y)},\\
		Q(x,t,y,s)&:=&sgn(v_1(x,t)-v_2(y,s))(g(v_1(x,t))-g(v_2(y,s))).
	\end{eqnarray*}
	Let $E_0,E_1,E_2\subset\re$ be three sets such that
	\begin{eqnarray}
	E_0&:=&\left\{t\in\re_+;\mbox{ $t$ is a Lebesgue point of } v_2(x,t)\mbox{ for a.e. }x\in\re\right\},\\
	E_1&:=&\left\{x\in\re;\mbox{ $x$ is a Lebesgue point of } v_2(x,t)\mbox{ for a.e. }t\in\re_+\right\},\\
	E_2&:=&\left\{x;\lim\limits_{\e\rr0}\int\eta_\e\left({x-y}\right)\max\limits_{\abs{u}\leq r}\abs{\al(u,x)-\al(u,y)}=0\right\}.
	\end{eqnarray}
	Since $v_2\in L^{\f}(\re\times\re_+)$, $E_0,E_1$ are measurable sets and $meas(\re\setminus E_0)=meas(\re\setminus E_1)=0$. By our assumption, for a fixed $x\in\re$, $\Psi(x,\cdot)$ is Lipschitz on $[-r,r]$. Since $C([-r,r])$ is separable, by Pettis Theorem we have measurability of $E_2$ and $meas(\re\setminus E_2)=0$. Therefore we can get
	\begin{eqnarray}
	&&\abs{\int_{\re}P_1(x,t,y,s)\eta_{\e}(y-x)\,dy-P_1(x,t,x,s)}\\
	&&\leq\int\limits_{\re}\abs{\al(v_2(y,s),x)-\al(v_2(x,s),x)}\eta_\e(y-x)\,dy\rr0\label{P1}
	\end{eqnarray}
	as $\e\rr0$ for $x\in E_1$ and a.e. $t,s\in\re_+$. We can also obtain
	\begin{eqnarray}
	&&\abs{\int\limits_{\re}P_2(x,t,y,s)\eta_\e(y-x)dy-P_2(x,t,x,s)}\nonumber\\
	&\leq&\int\limits_{\re}\abs{\al(v_2(y,s),x)-\al(v_2(x,s),x)}\eta_\e(y-x)\,dy+2\int\limits_\re\eta_\e\left({x-y}\right)\max\limits_{\abs{u}\leq r}\abs{\al(u,x)-\al(u,y)}\nonumber\\
	&\rr&0\label{P2}
	\end{eqnarray}
	as $\e\rr0$ for $x\in E_2$ and a.e. $t,s\in\re_+$. With the help of \eqref{P1} and \eqref{P2} and Lebesgue Dominated Convergence Theorem we have
	\begin{eqnarray}
	\lim\limits_{\e\rr0}\int\limits_{\re^2_+}\int\limits_{\re^2_+}\left(P_1(x,t,y,s)\frac{\pa}{\pa t}(\Phi(x,t)\eta_{\de}(s-t))+
	P_2(x,t,y,s)\frac{\pa}{\pa s}(\Phi(x,t)\eta_{\de}(s-t))\right)\\
	\eta_\e(y-x)\,dtdxdsdy
	=\int\limits_{\re^2_+}\int\limits_{\re_+}P_1(x,t,x,s)\eta_{\de}(s-t)\frac{\pa}{\pa t}\Phi(x,t)\,dtdxds.
	\end{eqnarray}
	In a similar way we can show 
	\begin{equation}
	\lim\limits_{\de\rr0}\int\limits_{\re^2_+}\int\limits_{\re_+}P_1(x,t,x,s)\eta_{\de}(s-t)\frac{\pa}{\pa t}\Phi(x,t)\,dtdxds=\int\limits_{\re^2_+}P_1(x,t,x,t)\frac{\pa}{\pa t}\Phi(x,t)\,dtdx.
	\end{equation}
	Similarly we have 
	\begin{equation}
	\abs{\int\limits_{\re}Q(x,t,y,s)\eta_\e(y-x)\,dy-Q(x,t,x,s)}\leq2C_R\int\limits_{\re}\abs{v_2(y,s)-v_2(x,s)}\eta_{\e}(y-x)\,dy\rr0
	\end{equation}
	as $\e\rr0$ for $x\in E_1$ and a.e. $t,s\in\re_+$. Then by Lebesgue Dominated Convergence Theorem we have
	\begin{equation}
	\lim\limits_{\e\rr0}\int\limits_{\re^2_+}\int\limits_{\re^2_+}Q(x,t,y,s)\eta_\e(y-x)\eta_\de(s-t)\frac{\pa }{\pa x}\Phi
	\,dtdxdsdy=\int\limits_{\re^2_+}\int\limits_{\re_+}Q(x,t,x,s)\eta_\de(s-t)\frac{\pa }{\pa x}\Phi\,dtdxds.
	\end{equation}
	We also have for a.e. $x\in\re$ and $t\in E_0$
	\begin{equation}
	\abs{\int\limits_{\re}Q(x,t,x,s)\eta_\de(y-x)\,dy-Q(x,t,x,t)}\leq2C_R\int\limits_{\re}\abs{v_2(x,t)-v_2(x,s)}\eta_{\de}(s-t)\,ds\rr0
	\end{equation}
	as $\de\rr0$.
	This yields
	\begin{equation}
	\lim\limits_{\e\rr0,\de\rr0}\int\limits_{\re^2_+}\int\limits_{\re^2_+}Q(x,t,y,s)\eta_\e(y-x)\eta_\de(s-t)\frac{\pa}{\pa x}\Phi\,dtdxdsdy=\int\limits_{\re^2_+}Q(x,t,x,t)\,dtdx.
	\end{equation}
	This completes the proof.
\end{proof}
Observe the following 
\begin{equation}
g(v(x,t))=g(\Psi(u(x,t),x))=A(u(x,t),x)=A(\al(v(x,t),x),x).
\end{equation}
From Lemma \ref{lemma:v_1v_2} we can prove the following by a similar argument as in \cite{Kruzkov}.
\begin{lemma}
	Let $v_1,v_2\in C([0,T],L^{1}_{loc}(\re))\cap L^{\f}(\re\times\re_+)$ be two function satisfying \eqref{inequality:modulated}. Then for a.e. $t\in[0,T]$ and any $r>0$ we have
	\begin{equation}
	\int\limits_{\abs{x}\leq r}\abs{\al(v_1(x,t),x)-\al(v_2(x,t),x)}\,dx\leq \int\limits_{\abs{x}\leq r+L_1t}\abs{\al(v_1(x,0),x)-\al(v_2(x,0),x)}\,dx
	\end{equation}
	where $L_1:=\sup\{\pa_u A(u,x);\,x\in\re,\abs{u}\leq\max(\|v_1(x,0)\|_{L^{\f}},\|v_2(x,0)\|_{L^{\f}})\}$.
\end{lemma}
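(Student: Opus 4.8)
The plan is to derive the cone estimate from the Kato-type inequality of Lemma~\ref{lemma:v_1v_2} by the classical finite-speed-of-propagation argument of Kru\v{z}kov. Write $u_i(x,t):=\al(v_i(x,t),x)$ and set $W(x,t):=\abs{\al(v_1(x,t),x)-\al(v_2(x,t),x)}$ together with $F(x,t):=sgn(v_1-v_2)(g(v_1)-g(v_2))$, so that Lemma~\ref{lemma:v_1v_2} reads $\pa_t W+\pa_x F\le0$ in $\mathcal{D}'(\re\times\re_+)$, i.e. $\int\int(W\phi_t+F\phi_x)\,dx\,dt\ge0$ for every $0\le\phi\in C_c^{\f}(\re\times\re_+)$. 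The first step is to record the Lipschitz bound $\abs{F}\le L_1 W$: since $\Psi(\cdot,x)$ is increasing we have $sgn(v_1-v_2)=sgn(u_1-u_2)$, while $g(v_i)=A(u_i,x)$, so $F=sgn(u_1-u_2)(A(u_1,x)-A(u_2,x))$ and hence $\abs{F}\le L_1\abs{u_1-u_2}=L_1 W$ by the definition of $L_1$.

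Next I would build a test function adapted to the truncated backward cone. Fix $r>0$, $0<\tau<T$ and a small $t_1>0$. Let $\ka_\e$ be a smooth nondecreasing approximation of the indicator of $(0,\f)$, and set the spatial cutoff $\psi_\e(x,t):=\ka_\e\big(r+L_1(\tau-t)-\abs{x}\big)$, which localizes to the cone $\{\abs{x}<r+L_1(\tau-t)\}$ whose lateral walls have slope $L_1$. Let $\nu_h(t)$ be a smooth approximation of $\chi_{[t_1,\tau]}(t)$ supported in $(0,T)$, so that $\nu_h'\rr\delta_{t_1}-\delta_{\tau}$. Then $\phi:=\nu_h\,\psi_\e\ge0$ is an admissible test function. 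Using $\psi_{\e,t}=-L_1\ka_\e'$ and $\psi_{\e,x}=-sgn(x)\ka_\e'$ gives
\begin{equation*}
\int\limits_{0}^{\f}\int\limits_{\re}\Big(W\,\nu_h'\,\psi_\e+\nu_h\,\ka_\e'\big(-L_1 W-sgn(x)F\big)\Big)\,dx\,dt\ge0.
\end{equation*}

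The crux of the argument, and the step carrying the finite propagation speed, is the lateral term: because $\ka_\e'\ge0$, $\nu_h\ge0$ and $-L_1 W-sgn(x)F\le -L_1 W+\abs{F}\le0$ by the Lipschitz bound, the second integrand is pointwise nonpositive and may be discarded, leaving $\int\int W\,\nu_h'\,\psi_\e\,dx\,dt\ge0$. Letting $\e\rr0$ replaces $\psi_\e$ by the sharp cone indicator, and letting $h\rr0$ turns $\nu_h'$ into $\delta_{t_1}-\delta_{\tau}$, so the surviving term becomes
\begin{equation*}
\int\limits_{\abs{x}\le r+L_1(\tau-t_1)}W(x,t_1)\,dx-\int\limits_{\abs{x}\le r}W(x,\tau)\,dx\ge0.
\end{equation*}
Here the evaluation at the time slices $t=t_1$ and $t=\tau$ is legitimate because $v_1,v_2\in C([0,T];L^1_{\loc}(\re))$, so $W(\cdot,t)$ is a well-defined $L^1_{\loc}$ function depending continuously on $t$; this same continuity is what lets me finally send $t_1\rr0$ to recover the initial slice $W(\cdot,0)$ and obtain the claimed inequality for $t=\tau$, hence for a.e. $t\in[0,T]$.

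The routine parts are the derivative computations and the convergence of the mollified integrals, handled by dominated convergence using $v_i\in L^{\f}$ (so that $W$ and $F$ are bounded) together with the continuity in time. I expect the only genuinely delicate points to be the sign analysis of the lateral term and the passage to the time slices: one must arrange the approximating families $\ka_\e,\nu_h$ so that the Dirac masses land exactly at $t_1$ and $\tau$, and invoke the strong $L^1_{\loc}$ time-continuity of $v_1,v_2$ to identify the limit of $\int\int W\,\nu_h'\,\psi_\e\,dx\,dt$ with the stated boundary integrals.
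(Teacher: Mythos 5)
Your proposal is correct and follows essentially the same route as the paper: the paper gives no detailed proof of this lemma, merely asserting that it follows from Lemma~\ref{lemma:v_1v_2} by ``a similar argument as in'' \cite{Kruzkov}, and your construction---the Lipschitz bound $\abs{F}\le L_1 W$ obtained from $sgn(v_1-v_2)=sgn(u_1-u_2)$ and $g(v_i)=A(\al(v_i,x),x)$, followed by the truncated-cone test functions $\nu_h\,\ka_\e(r+L_1(\tau-t)-\abs{x})$ and the sign analysis of the lateral term---is exactly that classical Kru\v{z}kov finite-speed-of-propagation argument. You also correctly read the time-derivative term in Lemma~\ref{lemma:v_1v_2} as $\abs{\al(v_1(x,t),x)-\al(v_2(x,t),x)}$ (which is what its doubling-of-variables proof actually produces via $P_1(x,t,x,t)$, despite the statement literally displaying $\abs{v_1-v_2}$), and this is precisely the form needed to conclude the stated contraction in the transformed variable $\al(v,\cdot)$.
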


\noindent\textbf{Acknowledgement.}  First author would like to thank Inspire faculty-research grant\\ DST/INSPIRE/04/2016/000237.

\section*{References}

\end{document}